\numberwithin{equation}{section}
\theoremstyle{plain}
\newtheorem{theorem}{Theorem}[section]
\newtheorem{corollary}[theorem]{Corollary}
\newtheorem{lemma}[theorem]{Lemma}
\newtheorem{proposition}[theorem]{Proposition}
\theoremstyle{definition}
\newtheorem{definition}[theorem]{Definition}
\theoremstyle{remark}
\newtheorem{remark}[theorem]{Remark}
\newcommand{\geo}{\rm Geod}
\newcommand{\lmt}[2]{\mathop{\lim}_{{#1} \rightarrow {#2}} }
\newcommand{\lip}[1]{{\mathrm{lip}}({#1})}
\newcommand{\lmts}[2]{\mathop{\overline{\lim}}_{{#1} \rightarrow {#2}} }
\newcommand{\lmti}[2]{\mathop{\underline{\lim}}_{{#1} \rightarrow {#2}} }
\newcommand{\Ric}{{\rm{Ric}}}
\newcommand{\bRic}{{\bf Ric}}
\renewcommand{\H}{{\mathrm{Hess}}}
\newcommand{\mm}{\mathfrak m}
\newcommand{\ms}{(X,\d,\mm)}
\newcommand{\cd}{{\rm CD}(K, \infty)}
\newcommand{\rcdkn}{{\rm RCD}(K, N)}
\newcommand{\rcd}{{\rm RCD}(K, \infty)}
\newcommand{\be}{{\rm BE}(K, \infty)}
\newcommand{\vol}{{\rm Vol}_{\rm g}}
\newcommand{\g}{{\rm g}}
\newcommand{\ent}[1]{{\rm Ent}_{#1}}
\newcommand{\La}{\mathrm{L}}
\newcommand{\E}{\mathbb{E}}
\newcommand{\qq}{\mathfrak{q}}
\newcommand{\N}{\mathbb{N}}
\newcommand{\Q}{\mathfrak{V}}
\newcommand{\R}{\mathbb{R}}
\renewcommand{\P}{\mathrm{P}}
\newcommand{\V}{\mathbb{V}}
\newcommand{\dm}{\,\d \mm}
\newcommand{\supp}{\mathop{\rm supp}\nolimits}   
\newcommand{\Lip}{\mathop{\rm Lip}\nolimits}
\newcommand{\loc}{{\rm loc}}
\renewcommand{\d}{{\mathrm d}}
\newcommand{\dt}{{\d t}}
\newcommand{\ddt}{{\frac \d\dt}}
\newcommand{\D}{{\mathrm D}}
\renewcommand{\div}{{\rm div}}
\newcommand{\restr}[1]{\lower3pt\hbox{$|_{#1}$}}
\newcommand{\la}{{\langle}}
\newcommand{\ra}{{\rangle}}
\newcommand{\nchi}{{\raise.3ex\hbox{$\chi$}}}
\title{Rigidity of some functional  inequalities on RCD spaces}
\author{Bang-Xian Han\thanks{Department of Mathematics, Technion-Israel Institute of Technology, 3200003, Haifa, Israel, hanbangxian@gmail.com. The research leading to these results is part of a project that has received funding from the European Research Council (ERC) under the European Union's Horizon 2020 research and innovation programme (grant agreement No 637851). The author thanks Emanuel Milman for valuable discussions,  also  the anonymous referee for the  valuable report.}
}
\begin{document}
\date{\today}
\maketitle

\begin{center}
\begin{minipage}{130mm}{\small
\textbf{Abstract}:   We   study the cases of equality and prove  a rigidity  theorem  concerning  the 1-Bakry-\'Emery inequality.  As an application,  we   prove the rigidity and identify the extremal  functions of  the Gaussian isoperimetric  inequality,   the logarithmic Sobolev  inequality and the  Poincar\'e inequality in the setting of  $\rcd$ metric measure spaces. This  unifies and extends  to the non-smooth setting the results of  Carlen-Kerce \cite{CarlenKerce01},  Morgan \cite{Morgan05}, Bouyrie  \cite{Bouyrie17}, Ohta-Takatsu \cite{Ohta2019}, Cheng-Zhou \cite{ChengZhou17}. 
  
   Examples of  non-smooth spaces  fitting our setting are  measured-Gromov Hausdorff limits of Riemannian manifolds with uniform Ricci curvature lower  bound,  and  Alexandrov spaces with  curvature lower bound.  
Some results  including  the rigidity of the 1-Bakry-\'Emery inequality,  the rigidity of  $\Phi$-entropy inequalities are of  particular interest  even  in the  smooth setting.
}
\\
\\
{\small
\textbf{R\'esum\'e}:   Nous \'etudions les cas d'\'egalit\'e et prouvons un th\'eor\`eme de rigidit\'e sur l'in\'egalit\'e  de Bakry-\'Emery. En tant qu'application, nous prouvons la rigidit\'e et identifions les fonctions extr\^emes de l'in\'egalit\'e isop\'erim\'etrique gaussienne, de l'in\'egalit\'e de Sobolev logarithmique  et de l'in\'egalit\'e de Poincar\'e dans le cadre de espace m\'etrique mesur\'e $\rcd$. Cela unifie et \'etend aux espaces non-lisses les r\'esultats de Carlen-Kerce \cite{CarlenKerce01},  Morgan \cite{Morgan05}, Bouyrie  \cite{Bouyrie17}, Ohta-Takatsu \cite{Ohta2019}, Cheng-Zhou \cite{ChengZhou17}
  
    Des exemples d'espaces non-lisses  satisfaisant notre cadre sont les limites mesur\'e de Gromov-Hausdorff des vari\'et\'es riemanniennes de courbure  de Ricci minor\'ee, et les espaces d'Alexandrov de courbure  minor\'ee.
Certains r\'esultats, notamment la rigidit\'e de l'in\'egalit\'e de Bakry-\'Emery, la rigidit\'e des in\'egalit\'es d'entropie g\'en\'eralis\'ee sont particuli\`erement int\'eressants m\^eme dans le cadre  lisse.
}
\\
\\
\textbf{Keywords}: Bakry-\'Emery inequality,  Ricci curvature, metric measure space,  rigidity.\\
\textbf{MSC code}: Primary 39B62, 53C23; Secondary 54C40.
\end{minipage}
\end{center}

\tableofcontents
\section{Introduction}\label{first section}
In this paper,  we  prove some rigidity  theorems concerning the 1-Bakry-\'Emery inequality and some other important functional inequalities on $\rcd$ metric measure spaces for positive  $K$.  Metric measure spaces satisfying  Riemannian curvature-dimension condition $\rcd$ were introduced by Ambrosio-Gigli-Savar\'e in \cite{AGS-M}, as a refinement of the  Lott-Sturm-Villani's  $\cd$ condition introduced in \cite{Lott-Villani09} and \cite{S-O1}.  Important examples of spaces satisfying $\rcd$ condition include: measured-Gromov Hausdorff limits of Riemannian manifolds with $\Ric  \geq  K $ (c.f. \cite{GMS-C}),   Alexandrov spaces with curvature $\geq K $ (c.f. \cite{ZhangZhu10}). 
We refer the readers to the survey \cite{AmbrosioICM} for an overview of this fast-growing field  and  bibliography. 

 
Let us briefly explain the primary motivation of this paper. It is  now well-known that the Bakry-\'Emery theory is an efficient tool  in the study of  geometric and functional inequalities (c.f. \cite{BE-D} and \cite{BakryGentilLedoux14}). Many important inequalities such as the  logarithmic-Sobolev inequality and the Gaussian isoperimetric inequality,  have  proofs using  heat flow or the $\Gamma_2$-calculus of  Bakry-\'Emery.  It was noticed (e.g. by Otto-Villani \cite{OttoVillani00} or Bouyrie \cite{Bouyrie17}) that the  cases of equality in the $\Gamma_2$-inequality $\Gamma_2 \geq K\Gamma$ are closely related with the rigidity of these inequalities.  More precisely, if there is {a} function attaining the equality in one of these inequalities,  there  exists  a ({\bf possibly different}) function  attaining the  equality in  the $\Gamma_2$-inequality.  For example,  when $K>0$,  any extreme function $f=f_p$ attaining the equality in the  sharp Poincar\'e inequality
\begin{equation}\label{intro:pi}
\int f^2\,\d \mm  \leq  \frac1{K} \int \Gamma(f)\,\d \mm
\end{equation}
satisfies $\Gamma_2(f_p)=K\Gamma(f_p)$, and any extreme function $f=f_l$ attaining  the equality in the sharp  logarithmic-Sobolev inequality 
\begin{equation}\label{intro:lsi}   
\int f\ln f\,\d \mm \leq \frac1{2K} \int \frac{\Gamma(f)}{f}\,\d \mm
\end{equation}
satisfies $\Gamma_2(\ln f_l)=K\Gamma(\ln f_l)$.

An interesting observation is that both $f_p, f_l$ attain  the equality in the   {\bf same}  1-Bakry-\'Emery  inequality
\begin{equation}\label{intro:bei}
\sqrt{\Gamma(P_t f)} \leq e^{-Kt} P_t \sqrt{\Gamma(f)}
\end{equation}
where $(P_t)_{t\geq 0}$ is the heat flow associated with the  Dirichlet form $\E(\cdot):=\int \Gamma(\cdot)\dm$ and the  `carr\'e du champ' $\Gamma$.
Furthermore,  both $ \div \Big(\frac{\nabla P_t f_p}{|\nabla \P_t  f_p|}\Big)$ and $ \div \Big(\frac{\nabla P_t f_l}{|\nabla \P_t  f_l|}\Big)$ attain the equalities in the $\Gamma_2$-inequality and  the 2-Bakry-\'Emery inequality.
The main aim of this paper is to understand  this observation in  general cases and  an abstract  framework.

\subsection{Bakry-\'Emery's  curvature criterion}

Let  $(M, \g, e^{-V}\vol)$ be a  weighted  Riemannian manifold equipped with  a  weighted volume measure $e^{-V}\vol$.  The canonical  diffusion operator  associated with this smooth metric measure space 
is $\La=\Delta-\nabla V$,  where $\Delta$ is the Laplace-Beltrami operator.  We say that  $(M, \g, e^{-V}\vol)$ satisfies the  $\be$ condition for some  $K\in \R$,  in the sense of  Bakry-\'Emery if
\[
\Ric_V:=\Ric+\H_V \geq K,
\]
where $\Ric$ denotes the Ricci curvature tensor and $\H_V$ denotes the Hessian of $V$.

There are several equivalent characterizations of $\be$ condition, which have their own advantages in studying  different problems. For example, the following ones are known to be equivalent to the $\be$ curvature criterion. Even in the non-smooth $\rcd$ framework,   these  characterizations  are equivalent  (in proper forms), see  \cite{AGS-M, AGS-B, Han-DCDS18, S-S} for more discussions on this topic.

\begin{itemize}
\item [{\bf a)}] {$\Gamma_2$-inequality}: $\Gamma_2(f) \geq K \Gamma(f)$  for all $f\in C_c^\infty(M)$, where   $\Gamma_2$ and $\Gamma$  are  defined by
\[
\Gamma_2(f):=\frac12 \La \Gamma(f,f) -\Gamma(f, \La f),\qquad\Gamma(f,f):=\frac12 \La (f^2)-f\La f=\g(\nabla f, \nabla f).
\]
\item [{\bf b)}]  {$p$-Bakry-\'Emery  inequality for  $p>1$}:
\begin{equation}\label{eq2-intro}
\sqrt{\Gamma (P_t f)}^p \leq e^{-pKt}P_t\big(\sqrt{\Gamma(f)}^p\big),\qquad \forall~~f\in W^{1,p}(M,  e^{-V}\vol)
\end{equation}
where $(P_t)_{t>0}$ is the semigroup generated by the diffusion operator   $\La$.

\item [{\bf c)}]  {1-Bakry-\'Emery  inequality}:
\begin{equation}\label{eq3-intro}
\sqrt{\Gamma (P_t f)} \leq e^{-Kt}P_t\big(\sqrt{\Gamma(f)}\big), \qquad \forall~~f\in W^{1, 1}(M,  e^{-V}\vol).
\end{equation}
\end{itemize}

Naturally, one would ask the following questions:  {what if  the equalities hold in these different characterizations of $\be$}? 
It will  not be surprising that  the equalities in the $\Gamma_2$-inequality, the 2-Bakry-\'Emery inequality,  and some other `second-order' inequalities, are all equivalent and  any non-constant  extreme function is affine and induces a splitting map.  For any $p>1$, by H\"older inequality, the equality in the  $p$-Bakry-\'Emery inequality yields the equality in the 1-Bakry-\'Emery inequality. Conversely, from the examples of the Poincar\'e inequality and the log-Sobolev inequality, the equality in the 1-Bakry-\'Emery inequality is {\bf strictly weaker} than  the equality in the $2$-Bakry-\'Emery inequality.  So we would ask:  what if  the equality in  the 1-Bakry-\'Emery inequality is attained by a non-constant function.  Inspired by  a recent work of Ambrosio-Bru\'e-Semola \cite{ABS19} concerning ${\rm RCD}(0, N)$ spaces, we conjecture that   on an $\rcd$ space  with $K> 0$,  the existence of a non-constant function attaining the equality in the 1-Bakry-\'Emery inequality yields the   splitting  theorem. 

\bigskip

In the first  theorem, we  prove the rigidity of the 1-Bakry-\'Emery inequality  on dimension-free  $\rcd$ spaces with $K > 0$.

\begin{theorem}[Lemma \ref{lemma}, Theorem \ref{thm:localization}, Proposition \ref{thm:berigidity2}, \ref{thm:berigidity}] \label{th3-intro}
Let  $\ms$ be an   $\rcd$ probability space  with  $K >0$.  Let  $u\in  {\rm D}(\Delta)$  be  a non-constant function with $\Delta u\in \V$.   Then the following statements are equivalent.

\begin{enumerate}
\item   ($\Gamma_2$-inequality) $\Gamma_2(u;  \varphi)=K\int \varphi\Gamma(u)\,\d \mm$ for any  $\varphi\in L^\infty$ with $\Delta \varphi \in L^\infty$;
\item  $\int (\Delta u)^2\,\d \mm=K\int \Gamma(u)\,\d \mm$;
\item  (Spectral gap) $-\Delta u=K u$;
\item  (Poincar\'e inequality)  $\int \Gamma(u)\,\d \mm=K\int u^2 \,\d \mm$;
\item  (2-Bakry-\'Emery inequality) $\Gamma(P_t u)= e^{-2Kt} P_t {\Gamma(u)}$ for some $t>0$.
\end{enumerate} 

If $u$ satisfies  one of the properties above,   it holds
\begin{enumerate}
\item  [a.] (1-Bakry-\'Emery inequality)  $\sqrt{\Gamma( P_t u)}= e^{-Kt} P_t \sqrt{\Gamma( u)}$ for all  $t>0$;
\item [b.]  $\bRic(u, u)=K\Gamma(u)\dm$; 
\item  [c.]$u$ is an affine function, this means $\H_u=0$ and $\Gamma(u)$ is a positive constant;
\item [d.] the gradient flow of $u$ induces a one-parameter semigroup of isometries  of $(X, \d)$.
\end{enumerate}

If $u$ attains the equality in the 1-Bakry-\'Emery inequality (6), we have
\begin{enumerate}
\item [e.] $\frac{\nabla P_t u}{|\nabla P_t u|}=:b$ does not depend on $t > 0$;
\item [f.] $\Delta \div(b)=-K \div(b)$, thus $\div(b)$ attains the equality in the 2-Barky-\'Emery inequality;
\item [g.]  $\nabla \div(b)=-K b$;
\item [h.]  there exists an $\rcd$ probability space $(Y, \d_Y, \mm_Y)$,  such that
the metric measure space $\ms$ is isometric to the product space $$\Big (\R, | \cdot |, \sqrt{{K}/(2\pi)} \exp(-{Kt^2}/2)\, \d t\Big) \times (Y, \d_Y, \mm_Y)$$ equipped with the $L^2$-product metric and the product measure;
\item  [i.] $ u$ can be  represented  in the coordinates of the product space $\R \times Y$ by $$ u(r, y)=\int_0^r g(s)\,\d s \qquad \forall (r, y) \in \R\times Y$$ for some non-negative $g\in L^2(\R, \sqrt{{K}/(2\pi)} \exp(-{Kt^2}/2)\, \d t)$.
In particular, if $u$ attains equality in the 2-Bakry-\'Emery inequality,   there is  a constant $C$ such that $$P_t u(r, y)=C e^{Kt}  r\qquad \forall (r, y) \in \R\times Y, \quad t>0. $$
\end{enumerate}

\end{theorem}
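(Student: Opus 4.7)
The plan is to handle Theorem \ref{th3-intro} in three stages matching the grouping of statements: first the purely second-order equivalences (1)--(5), then the rigidity consequences (6)--(9) that force $u$ to be affine with constant carr\'e du champ, and finally the splitting conclusions (10)--(14), which must be derived from the genuinely weaker 1-Bakry-\'Emery equality.

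For (1)--(5) the main tool is the integrated Bochner identity. Testing the $\Gamma_2$-inequality against $\varphi\equiv 1$ (admissible because $\mm$ is a probability measure and $\Delta u\in \V$) reduces the pointwise equality in (1) to the scalar identity $\int(\Delta u)^2\dm=K\int\Gamma(u)\dm$, giving (1) $\Leftrightarrow$ (2). The equivalences (2) $\Leftrightarrow$ (3) $\Leftrightarrow$ (4) then follow by expanding $\|\Delta u+Ku\|_{L^2}^2$ and using the identity $-\int u\Delta u\dm=\int\Gamma(u)\dm$. For (5) I would run the standard interpolation argument: differentiate $s\mapsto P_s\Gamma(P_{t-s}u)$ on $[0,t]$, use $\Gamma_2\geq K\Gamma$ to see that 2-BE equality at one $t>0$ propagates to all intermediate times, and then integrate against $\mm$ to recover (1).

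The passage (1)--(5) $\Rightarrow$ (6)--(9) uses the self-improved Bochner inequality available on $\rcd$, namely the matrix/measure bound $\bRic(u,u)\geq K\Gamma(u)\dm+|\H_u|_{\rm HS}^2\dm$. Equality in (1) forces both $\bRic(u,u)=K\Gamma(u)\dm$, which is (7), and $\H_u\equiv 0$, which is half of (8); constancy of $\Gamma(u)$ then follows from $\tfrac12\nabla\Gamma(u)=\H_u(\nabla u)=0$. Property (9) is immediate since a function with vanishing Hessian and constant gradient length generates a one-parameter family of isometries of $(X,\d)$. For (6), once $\Gamma(u)$ is constant Jensen's inequality applied to the probability kernel $P_t$ is saturated, i.e.\ $(P_t\sqrt{\Gamma(u)})^2=P_t\Gamma(u)$, and the 2-BE equality (5) upgrades pointwise to 1-BE equality.

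The main obstacle is proving (10)--(14) starting only from (6). The identity $\sqrt{\Gamma(P_t u)}=e^{-Kt}P_t\sqrt{\Gamma(u)}$ is itself a Jensen-type equality for the heat-kernel representation of $P_t$, and unwinding it forces the direction field $b_t:=\nabla P_t u/|\nabla P_t u|$ to coincide with the common direction along which the kernel effectively transports mass; a semigroup computation then gives $b_t=b_{t'}=:b$ for all $t,t'>0$, which is (10). Differentiating the 1-BE identity once in $t$ and using $\partial_t P_t u=\Delta P_t u$ produces the linear PDE $\Delta\div(b)=-K\div(b)$, i.e.\ (11); thus $v:=\div(b)$ is an extremal function satisfying (3), and applying the already-proved rigidity with $v$ in the role of $u$ yields $\nabla v=-Kb$, which is (12). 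Steps (13)--(14) are obtained by adapting the Ambrosio--Bru\'e--Semola splitting scheme (developed for ${\rm RCD}(0,N)$) to the positive-curvature setting: the isometric flow generated by $b$ (via (9) applied to $v$) provides the $\R$-factor, $-v/K$ plays the role of its coordinate, and the Gaussian density, rather than Lebesgue, is forced by the Ornstein--Uhlenbeck eigenvalue identity $-\Delta v=Kv$. The delicate point---where I expect the bulk of the technical work to sit---is showing that these infinitesimal identities promote to a \emph{global} metric-measure isomorphism, in particular that $v_\sharp\mm$ equals $\sqrt{K/(2\pi)}\exp(-Kt^2/2)\,\d t$ on all of $\R$ and that the orthogonal factor inherits a bona fide $\rcd$ structure. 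Once the splitting is in hand, $b=\partial_r$ forces $\nabla u$ to point along $\R$, so $u$ depends only on $r$ and can be represented as $u(r,y)=\int_0^r g(s)\,\d s$ with $g\geq 0$, proving (14); in the 2-BE case, (8) additionally forces $g$ to be constant, and the explicit formula for $P_t u$ follows from the spectral identity $-\Delta u=Ku$.
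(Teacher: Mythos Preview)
Your treatment of (1)--(5) and of the consequences (6)--(9) is essentially the paper's argument; the only cosmetic difference is that for $(2)\Rightarrow(3)$ and $(4)\Rightarrow(3)$ the paper uses a first-variation argument on $\int(\Delta(u\pm\varepsilon g))^2\geq K\int\Gamma(u\pm\varepsilon g)$ and on Poincar\'e, whereas you expand $\|\Delta u+Ku\|_{L^2}^2$ and invoke Poincar\'e directly. Both are standard and equivalent. (One slip: in the self-improved Bochner you quote, the Hessian term sits inside ${\bf\Gamma}_2$, not inside $\bRic$; this does not affect the argument.)

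The genuine gap is in the passage from the bare 1-BE equality to (10)--(12). Two of your proposed mechanisms do not work as stated.

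First, the 1-BE inequality is \emph{not} a Jensen/Cauchy--Schwarz consequence of 2-BE: since $\sqrt{\cdot}$ is concave, Jensen gives $P_t\sqrt{\Gamma(u)}\le\sqrt{P_t\Gamma(u)}$, which points the wrong way. So ``unwinding a Jensen-type equality'' cannot produce (10). Second, differentiating the scalar identity $|\nabla P_t u|=e^{-Kt}P_t|\nabla u|$ once in $t$ yields an expression involving $\langle b,\nabla\Delta P_t u\rangle$ and $\Delta P_t|\nabla u|$; it does not, by itself, give the eigenvalue equation $\Delta\div(b)=-K\div(b)$. What the paper actually does is a \emph{two-step Euler--Lagrange} argument at the extremal $P_sf$: a first variation of $h\mapsto\int(e^{-Kt}P_t|\nabla h|-|\nabla P_th|)\varphi\,\d\mm$ gives the commutation $\langle b_{t+s},\nabla P_tg\rangle=P_t\langle b_s,\nabla g\rangle$, hence $P_t\div(b_{t+s})=\div(b_s)$ and the backward-heat relation $\partial_s\div(b_s)=-\Delta\div(b_s)$. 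The decisive and hardest step is then a \emph{second} variation, this time of $\partial_t\psi(t,g)$ where $\psi(t,g)=\int e^{Kt}|\nabla P_tg|\,\d\mm$, which after a density argument on suitable test classes pins down $\int(\div b_s)^2\,\d\mm=K e^{2Ks}$ exactly (Proposition~\ref{lemma:1be2}). Gr\"onwall plus Poincar\'e then force equality in Poincar\'e for $\div(b_s)$, giving (11); (12) follows by testing, and only \emph{then} does one deduce that $e^{-Ks}b_s$ is constant in $s$, i.e.\ (10). You have also misplaced the main difficulty: once (11)--(12) are known, the splitting (13) is a direct citation of the Gigli--Ketterer--Kuwada--Ohta rigidity for the spectral gap, so the ``bulk of the technical work'' is not the global isomorphism but precisely obtaining (11) from the 1-BE equality via the double variational scheme above.
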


\begin{remark}
Concerning a $\rcd$ probability space with negative $K$, we also prove in \label{coro:negative} that the equality in the 1-Bakry-\'Emery inequality can not be attained by any non-constant function.   For spaces with infinite volume measure, it is still unknown to us. But we conjecture that a similar splitting theorem also holds for negative $K$, at least for $\rcdkn$ spaces with $N<\infty$.
\end{remark}

\subsection{Gaussian isoperimetric  inequality}
For $K>0$, let $\phi_K(t)=\sqrt{\frac{K}{2\pi}} \exp(-\frac{Kt^2}2)$ be  a Gaussian-type (probability) density function on $\R$. It is known that $(\R, |\cdot|, \phi_K\mathcal L^1)$ is a model space with synthetic Ricci curvature lower bound $K$.

 Let $\Phi_K$ denote the error function
\[
\Phi_K(t):=\int_{-\infty}^t \phi_K(s)\,\d s.
\]
It can be seen that $\Phi_K$ is continuous and strictly increasing, so its inverse $\Phi_K^{-1}$ is well-defined. We define the Gaussian isoperimetric profile $I_K:(0, 1) \mapsto [0, \sqrt{\frac{K}{2\pi}}]$ by
\begin{equation}\label{intro:GII-1}
I_K(t):= \phi_K\circ \Phi_K^{-1}(t),
\end{equation}
and we define $I_K(t)=0$ for $t=0, 1$.
It can be seen that $I_K=\sqrt K I_1$ and  $I''_K I_K=-K$. In particular, $I_K(t)$ is strictly concave in $t$ and increasing in $K$.

\bigskip

Let  $\gamma_n=\Pi_{i=1}^n\phi_1(x_i)\d x_i$ be the $n$-dimensional standard Gaussian measure on $\R^n$.  Based on an isoperimetric inequality on the discrete cube and central limit theorem,   Bobkov \cite{Bobkov97} proved the following functional version of the  Gaussian isoperimetric inequality
\begin{equation}\label{intro:eq0}
I_1\left (\int f \,\d \gamma_n \right ) \leq \int \sqrt{I_1(f)^2+|\nabla f|^2}\,\d \gamma_n
\end{equation}
for any  Lipschitz function $f$ on $(\R^n, |\cdot|, \gamma_n)$ with values in $[0, 1]$.

 In \cite{BakryLedoux96}, Bakry and Ledoux   proved  Bobkov's inequality  \eqref{intro:eq0} on smooth metric measure spaces  using a semigroup method. Recently,  by adopting the argument of Bakry-Ledoux, Ambrosio-Mondino  \cite{AmbrosioMondino-Gaussian}  obtain  Bobkov's inequality in  the non-smooth $\rcd$ setting.

\bigskip
One interesting problem is:  {when does the equality hold in Bobkov's inequality \eqref{intro:eq0}}?
In \cite[Section 2]{CarlenKerce01},  by extending ideas of Ledoux  \cite{Ledoux98},  Carlen and Kerce  characterized the cases of  equality  in \eqref{intro:eq0} for Gaussian space.    Recently,  Carlen-Kerce's technique is adopted by Bouyrie \cite{Bouyrie17} to study this problem on  weighted  Riemannian manifolds satisfying the  $\be$ condition with $K>0$.

\bigskip

In this paper,  we will study the cases of equality in  Bobkov's inequality on $\rcd$ spaces. We will identify all the  extremal functions, and prove that  any non-trivial extreme function induces an isometry map  from this space to a product space.

Let us explain how to formulate  Bobkov's inequality on an $\rcd$  metric measure space $\ms$. 
Denote by $\V$ the space of 2-Sobolev functions, defined   as the collection of functions $f\in L^2(X, \mm)$  such that there exists a sequence  $(f_n)_n \subset \Lip(X, \d)$  converging  to  $f$ in $ L^2$ and $\lip{f_n} \to G$ in $L^2$ for some $G$, where $\lip{f_n}$ is the local Lipschitz constant of $f_n$ defined by
 \[
 \lip{f_n}(x):=\lmts{y}{x} \frac{|f_n(y)-f_n(x)|}{\d(y, x)} 
\]
(and we define $\lip{f_n}(x)=0$ if $x$ is an isolated point).   It is known that there exists a minimal function in $\mm$-a.e. sense, denoted  by $|\nabla f|$,  called minimal weak upper  gradient. 
If  $(X, \d)$ is a Riemannian manifold and $\mm=
\vol$ is its volume measure, it is known that $|\nabla f|=\lip{f}$  for any $f\in \Lip$.

On $\rcd$ spaces, it is known that (c.f. \cite{AGS-C, AGS-M})  the  functional $\V\ni f \mapsto \E(f)=\int |\nabla f|^2\,\d \mm$ is lower semi-continuous (w.r.t. weak $L^2$-convergence), and it is a quasi-regular, strongly local, conservative Dirichlet form admitting a carr\'e du champ $\Gamma(f):=|\nabla f|^2$. 

 Let   $(P_t)_{t\geq 0}$ be the  $L^2$-gradient flow of $\E$ with generator $\Delta$.  If  $\ms$ is a  smooth Riemannian manifold with boundary, it is known that $(P_t)$ is  the Neumann  heat flow and $\Delta$ is the (Neumann) Laplace-Beltrami operator.  For any  $f\in  L^1$  with values in $[0, 1]$ and $K>0$,  we define $J_K(f)\in [0, +\infty]$ by 
\begin{equation}\label{intro:eq1}
J_K(f):= \lmti{t}0 \int \sqrt{I_K(P_t f)^2+|\nabla P_t f|^2}\,\d \mm.
\end{equation}

\begin{definition}[Bobkov's inequality on metric measure spaces]\label{def:bobkov}
We say that a general metric measure space  $\ms$ supports the  $K$-Bobkov's isoperimetric inequality  if  for all  measurable $f\in L^1(X, \mm)$ with values in $[0, 1]$,
\begin{equation}\label{intro:eq2}
  I_K \left (\int f \,\d \mm \right ) \leq J_K(f).
\end{equation}
\end{definition}

\begin{remark}
It is known that $\mm(X)<\infty$ if $\ms$ satisfies $\rcd$ with $K>0$ (c.f. \cite[Theorem 4.26]{S-O1}). Without loss of generality, we can assume that $\mm$ is a probability measure.   Furthermore,  the assumption `$f\in L^1(X, \mm)$' in  Definition \ref{def:bobkov} could be removed.
\end{remark}

Applying \eqref{intro:eq2} with a characteristic function $f=\nchi_E$ for a  Borel set $E \subset X$,  we get the following {\sl Gaussian isoperimetric inequality}
\begin{equation}\label{eq:gii}
P(E) \geq I_K\big(\mm(E)\big)
\end{equation}
 where  $P(E)$ is  the perimeter function  defined by  $P(E):=|\D \nchi_E|_{\rm TV}(X)$, and  $|\D \nchi_E|_{\rm TV}$ is the total variation of  $\nchi_E$ (c.f. \cite{ADM-BV, ADMG-P} for more details above BV functions and the perimeter function on metric measure spaces).

By  lower semi-continuity of weak gradients and Bakry-\'Emery's gradient estimate $|\lip{ P_t f}|^2 \leq e^{-2Kt}P_t \big( |\nabla f|^2 \big)$ (see \cite[Theorem 6.2]{AGS-M}), we can see that
$$J_K(f)=\int \sqrt{I_K(f)^2+|\nabla f|^2}\,\d \mm$$
for $f\in \Lip$.
In addition,  we can see that Bakry-\'Emery's gradient estimate yields the irreducible of $\E$, i.e. $|\nabla f|=0 $ implies  that $f$ is  constant.  Since irreducibility implies ergodicity of the heat flow (see for instance \cite[Section 3.8]{BakryGentilLedoux14}), we know $P_t f \to \int f \dm$ in $L^2$ as $t\to \infty$.  Notice that by   2-Bakry-\'Emery inequality, $ \lmt{t}{\infty} |\nabla P_t f| = 0$ in $L^2$. Thus we get  
$$\lmti{t}\infty \int \sqrt{I_K(P_t f)^2+|\nabla P_t f|^2}\,\d \mm= I_K \left (\int f \,\d \mm \right ).$$

In Proposition \ref{prop1:monotone} we prove that the function $t\mapsto J_K(P_t f)$ is non-increasing on $\rcd$ spaces with positive $K$. From the discussions above we know these spaces support Bobkov's inequality. In particular, $f$ attains the equality in  Bobkov's inequality if and only if $J_K(P_t f)$ is a constant function in $t$.
Then, in Proposition \ref{prop2:splitting} we prove the rigidity of  Bobkov's inequality, which extends    \cite[Theorem 1]{CarlenKerce01} and \cite[Theorem 1.4]{Bouyrie17} to the non-smooth setting.

\begin{theorem}[Proposition \ref{prop1:monotone} and  \ref{prop2:splitting}]\label{th1-intro}
Assume that a metric measure space $\ms$ satisfies  $\rcd$ for some $K>0$. Then $\ms$ supports $K$-Bobkov's isoperimetric inequality.

Furthermore,  $ I_K \left (\int f \,\d \mm \right ) =J_K(f)$ for some non-constant $f\in L^\infty$ if and only if
 $$\ms \cong \Big (\R, | \cdot |, \sqrt{{K}/(2\pi)} e^{-{Kt^2}/2}\mathcal \d t \Big) \times (Y, \d_Y, \mm_Y)$$ for some  $\rcd$ space $(Y, \d_Y, \mm_Y) $,
and   up to change of variables, $f$ is either  the indicator function of a half space 
\[
f(r, y)=\nchi_E, \qquad E={(-\infty, e] \times Y}, (r, y) \in \R \times Y
\]
where $e\in \R\cup \{+\infty\}$ with  $\int_{-\infty}^e \phi_K(s)\,\d s=\int f\dm$;
or else, there are  $a=(2\int f)^{-1}$ and  $b=\Phi^{-1}_K\big(f(0, y)\big)$ such that
$$f(t, y)=\Phi_K(at+b)=\int_{-\infty}^{at+b} \phi_K(s)\,\d s.$$

\end{theorem}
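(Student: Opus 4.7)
The plan is to mimic the Bakry--Ledoux--Ambrosio--Mondino semigroup proof of Bobkov's inequality and extract the rigidity statement by invoking Theorem~\ref{th3-intro}. For the existence part, I would define $G(t) := J_K(P_t f)$ and show that $G$ is non-increasing on $(0,\infty)$. For $f$ regular enough (say bounded and bounded away from $\{0,1\}$), introduce $v_t := \Phi_K^{-1}(P_t f)$ and rewrite
\[ J_K(P_t f) = \int I_K(P_t f)\,\sqrt{1+\Gamma(v_t)}\,\d\mm. \]
Differentiating in $t$ and combining $\partial_t P_t f = \Delta P_t f$, the ODE $I_K'' I_K = -K$, and integration by parts, one obtains a derivative controlled pointwise by $\Gamma_2(v_t)-K\,\Gamma(v_t)$, which is non-positive by the $\Gamma_2$-inequality available on $\rcd$ spaces. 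Since $P_t f\to\int f\,\d\mm$ in $L^2$ and $|\nabla P_t f|\to 0$ as $t\to\infty$ (spectral gap and ergodicity for $K>0$), $\lim_{t\to\infty} G(t) = I_K\bigl(\int f\,\d\mm\bigr)$, yielding Bobkov's inequality. A truncation/approximation argument then removes the regularity hypotheses on $f$.

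For the rigidity, equality $I_K\bigl(\int f\,\d\mm\bigr)=J_K(f)$ forces $G$ to be constant on $(0,\infty)$, so $G'(t_0)=0$ for every $t_0>0$. Tracing back the integral identity from the previous step, this upgrades to $\Gamma_2(v_{t_0}) = K\,\Gamma(v_{t_0})$ (in the $\rcd$ sense). Applying Theorem~\ref{th3-intro} to $v_{t_0}$ then provides a splitting
\[ (X,\d,\mm)\cong \bigl(\R,|\cdot|,\phi_K\,\d t\bigr)\times (Y,\d_Y,\mm_Y), \]
in which $v_{t_0}(r,y)=\alpha r+\beta$ is affine along the $\R$-factor, so $P_{t_0}f(r,y)=\Phi_K(\alpha r+\beta)$ depends only on $r$.

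To identify $f$ itself, I would exploit that the heat flow factorizes through the product and reduces to the Ornstein--Uhlenbeck semigroup on the Gaussian line. Since the class $\{r\mapsto \Phi_K(ar+b):a\in\R,b\in\R\}$ is invariant under this flow, running the profile of $P_{t_0}f$ backward to $t=0$ within that one-parameter family identifies $f$ with $f(r,y)=\Phi_K(ar+b)$; the constants $a=(2\int f\,\d\mm)^{-1}$ and $b=\Phi_K^{-1}(f(0,y))$ are then read off by evaluation at $r=0$ and a variance computation. The singular limit $a\to\infty$ corresponds to $f$ being the indicator $\nchi_{(-\infty,e]\times Y}$, with $e=\Phi_K^{-1}(\int f\,\d\mm)$ forced by the mass constraint; this case is accessed by applying Bobkov's inequality to $f=\nchi_E$, for which $J_K(f)=P(E)$ reduces to the sharp Gaussian isoperimetry \eqref{eq:gii} and the rigidity of half-space extremizers.

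The main obstacle I anticipate is the passage from the integral equality $G'(t_0)=0$ to the functional identity $\Gamma_2(v_{t_0}) = K\,\Gamma(v_{t_0})$ required to apply Theorem~\ref{th3-intro}, since $v_{t_0} = \Phi_K^{-1}(P_{t_0}f)$ need not a priori lie in $\D(\Delta)$ with $\Delta v_{t_0}\in\V$. Overcoming this requires an $\eps$-truncation of the values of $P_{t_0}f$ (so that $\Phi_K^{-1}$ acts on arguments bounded away from the singular endpoints $\{0,1\}$), together with a $\Gamma_2$-calculus on a sufficiently rich algebra of test functions. The indicator case demands a parallel BV treatment in which the variational definition of $J_K$ is recovered from the perimeter measure via the BV-functional calculus on $\rcd$ spaces.
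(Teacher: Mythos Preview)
Your proposal is correct and follows the same route as the paper: show $t\mapsto J_K(P_tf)$ is non-increasing by differentiating and invoking the Bochner/$\Gamma_2$ inequality (this is Proposition~\ref{prop1:monotone}), and in the equality case extract ${\bf\Gamma}_2(h_t)=K\Gamma(h_t)\,\mm$ for $h_t=\Phi_K^{-1}(P_tf)$, then apply Theorem~\ref{th3-intro} (via Lemma~\ref{lemma}) to split and identify $f$ by reduction to the one-dimensional Gaussian line (Proposition~\ref{prop2:splitting}, which cites Carlen--Kerce for the final step). The only cosmetic difference is that the paper performs the derivative computation in the $f_t$-variable and translates the vanishing terms to $h_t$ only in the rigidity analysis, rather than working in $v_t$ throughout; the $\epsilon$-truncation you anticipate is precisely how the paper handles the regularity issue.
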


\subsection{$\Phi$-entropy inequalities}\label{section2:intro}

Let $\Phi$ be a continuous function defined on an interval $I \subset \R$. For any  $I$-valued function $f$,    the $\Phi$-entropy of $f$ is defined by
 \[
 \ent{\mm}^\Phi(f):= \int \Phi(f)\dm.
\]
Using a similar method as Chafa\"i \cite{Chafai04} (see also Bolley-Gentil \cite{BolleyGentil10}), 
we can  prove the following  $\Phi$-entropy inequality on $\rcd$ spaces. It can be seen that the  Poincar\'e inequality and the log-Sobolev inequality are both $\Phi$-entropy inequalities.

\begin{proposition} [Proposition \ref{prop:phi}]\label{prop:phi:intro}
Let  $\ms$ be a  metric measure space satisfying  $\rcd$ condition for some $K>0$. Let $\Phi$ be a $C^2$-continuous  strictly convex function on an interval $I\subset \R$ such that  $\frac{1}{\Phi''}$
is concave.  Then $\ms$ supports  the following $\Phi$-entropy inequality:
\begin{equation}\label{eq1-prop:phi:intro}
\ent{\mm}^\Phi(f)-\Phi\Big(\int f\dm \Big) \leq  \frac1{2K} \int \Phi''(f) \Gamma(f)\dm
\end{equation}
for any  $I$-valued function  $f$.
\end{proposition}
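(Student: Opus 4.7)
The plan is to adapt the Bakry-\'Emery semigroup argument of Chafa\"i~\cite{Chafai04} and Bolley-Gentil~\cite{BolleyGentil10} to the non-smooth $\rcd$ setting. The decisive algebraic input is that the concavity of $1/\Phi''$ is equivalent to the pointwise inequality $2(\Phi''')^2\leq \Phi''\Phi''''$ on the interior of $I$ (which in particular forces $\Phi''''\geq 0$).

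First I would set $G(t):=\int \Phi''(P_tf)\Gamma(P_tf)\dm$ and observe that, since $K>0$ forces $\mm$ to be an ergodic probability measure with $P_tf\to\int f\dm$ in $L^2$ as $t\to\infty$, the chain rule and integration by parts give
\[
\ent{\mm}^\Phi(f)-\Phi\Big(\int f\dm\Big) = -\int_0^\infty \frac{d}{dt}\int \Phi(P_tf)\dm\,dt = \int_0^\infty G(t)\,dt.
\]
Hence it suffices to prove the exponential decay $G(t)\leq e^{-2Kt}G(0)$, which when integrated yields \eqref{eq1-prop:phi:intro}.

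For this, write $u=P_tf$ and combine the invariance $\int \Delta(\Phi''(u)\Gamma(u))\dm=0$, the chain rule $\Delta\Phi''(u)=\Phi'''(u)\Delta u+\Phi''''(u)\Gamma(u)$, and $\Delta\Gamma(u)=2\Gamma_2(u)+2\Gamma(u,\Delta u)$ to obtain
\[
G'(t) = -\int\Big[2\Phi''(u)\Gamma_2(u)+\Phi''''(u)\Gamma(u)^2+2\Phi'''(u)\Gamma(u,\Gamma(u))\Big]\dm.
\]
Applying the self-improved Bakry-\'Emery inequality $\Gamma(\Gamma(u))\leq 4\Gamma(u)(\Gamma_2(u)-K\Gamma(u))$ (available on $\rcd$ spaces through Savar\'e's refinement) absorbs the $2K\Gamma(u)$-term; Cauchy-Schwarz $\Gamma(u,\Gamma(u))^2\leq \Gamma(u)\Gamma(\Gamma(u))$ together with $2(\Phi''')^2\leq \Phi''\Phi''''$ then makes the remaining integrand nonnegative (a perfect square in $\sqrt{\Phi''(u)\Gamma(\Gamma(u))/(2\Gamma(u))}$ and $\sqrt{\Phi''''(u)}\,\Gamma(u)$ up to sign), giving $G'(t)+2KG(t)\leq 0$.

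The main obstacle is executing this $\Gamma_2$-calculus in a non-smooth space, where $u=P_tf$ is only known to lie in $\D(\Delta)$ with $\Gamma(u)\in \V$, $\Gamma(\Gamma(u))$ is a priori merely a measure, and $\Phi''(u),\Phi'''(u),\Phi''''(u)$ are only defined where $u$ takes values in $I$. I would handle this by first truncating $f$ to lie in a compact subinterval of $I$ on which all relevant derivatives of $\Phi$ are bounded, exploiting the heat-flow regularization ($P_tf\in \Lip\cap \D(\Delta)$ with $\Gamma(P_tf)\in \V$) to make the above quantities well-defined $\mm$-a.e., and interpreting $\Gamma_2$ in the measure-valued sense; a density argument together with lower semicontinuity of the $\Phi$-entropy and Fatou's lemma then extend \eqref{eq1-prop:phi:intro} to all $I$-valued measurable $f$.
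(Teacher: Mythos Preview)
Your argument is essentially the classical $\Gamma_2$-computation of Chafa\"i and Bolley--Gentil, and with the regularity caveats you flag it can indeed be carried out on $\rcd$ spaces. It is, however, \emph{not} the route taken in the paper.

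The paper never differentiates $G(t)$ in $t$ and never invokes the self-improved estimate $\Gamma(\Gamma(u))\leq 4\Gamma(u)(\Gamma_2(u)-K\Gamma(u))$. Instead it proves a \emph{pointwise} commutation inequality (Proposition~\ref{theorem:phi})
\[
\Phi''(P_t f)\,\Gamma(P_t f)\ \leq\ e^{-2Kt}\,P_t\big(\Phi''(f)\,\Gamma(f)\big)
\]
by combining only three ingredients: the 1-Bakry--\'Emery inequality $\sqrt{\Gamma(P_tf)}\leq e^{-Kt}P_t\sqrt{\Gamma(f)}$, Jensen's inequality applied to the concave function $1/\Phi''$ (giving $\Phi''(P_tf)\leq \big(P_t(1/\Phi''(f))\big)^{-1}$), and Cauchy--Schwarz in the form $(P_t\sqrt{\Gamma(f)})^2\leq P_t(\Phi''(f)\Gamma(f))\cdot P_t(1/\Phi''(f))$. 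Integrating this pointwise bound over $X$ gives $G(t)\leq e^{-2Kt}G(0)$, and then the time integration is identical to yours.

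The trade-offs are worth noting. The paper's argument uses only the weakest form of the gradient estimate (1-BE), requires $\Phi$ merely $C^2$ as stated (your computation implicitly needs $\Phi\in C^4$ to even write $\Phi''''$, so an extra approximation layer is needed), and completely sidesteps the measure-valued $\Gamma_2$ machinery and the delicate justification of differentiating $G$; it also transparently identifies the equality cases (equality in 1-BE, in Jensen, and in Cauchy--Schwarz), which is exactly what the paper needs later for the rigidity analysis. Your $\Gamma_2$ route is the more classical one in the smooth literature and would also yield the result, but in the non-smooth setting it is heavier and loses some of the clean equality characterization.
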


Furthermore, we completely characterize the cases of equality in $\Phi$-entropy inequalities. In particular, we prove that  the Poincar\'e inequality and  the log-Sobolev inequality are  essentially the only $\Phi$-entropy inequalities,  such that the corresponding  equalities can be attained by non-trivial functions.

\begin{theorem}\label{th2-intro}
Let  $\ms$ be a  metric measure space satisfying  $\rcd$ for some $K>0$. Assume there is a function  $\Phi$ which  fulfils the conditions in Proposition \ref{prop:phi:intro}, and a non-constant function $f$ attaining  the equality in  the corresponding  $\Phi$-entropy inequality. Then
\begin{enumerate}
\item $f$ attains the equality in the  1-Bakry-\'Emery inequality, so that $\ms$ is isometric to 
$$ \Big (\R, | \cdot |, \sqrt{{K}/(2\pi)} e^{-{Kt^2}/2}\mathcal \d t \Big) \times (Y, \d_Y, \mm_Y)$$ for some  $\rcd$ space $(Y, \d_Y, \mm_Y) $;
\item $\Phi'(f)$ attains the equality in  the 2-Bakry-\'Emery inequality;
\item up to affine coordinate transforms, additive and  multiplicative constants, $\Phi=x^2$ or $x\ln x$.  In these cases,  $f(r, y)$ can be written as $a_p r$ or  $e^{a_l r-a_l^2/2K}$ for  some constants $a_p, a_l\in \R$.
\end{enumerate} 
 
\end{theorem}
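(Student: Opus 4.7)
The strategy is to run the semigroup proof of Proposition~\ref{prop:phi:intro} (in the spirit of Chafa\"i \cite{Chafai04}) and to analyse the equality cases directly from the resulting deficit formula. Set $g_t:=P_t f$ and $I_\Phi(h):=\int\Phi''(h)\,\Gamma(h)\dm$. A Bochner-type computation on $\rcd$ spaces yields an inequality of the form $-\tfrac{\d}{\dt}I_\Phi(g_t)\ge 2K\,I_\Phi(g_t)+\text{(concavity remainder)}$, and integrating $\tfrac{\d}{\dt}(e^{2Kt}I_\Phi(g_t))\le 0$ from $0$ to $\infty$ produces a deficit decomposition of the form
\begin{equation*}
  \tfrac{1}{2K}I_\Phi(f)-\ent{\mm}^\Phi(f)+\Phi\Big(\textstyle\int f\dm\Big)=\int_0^{\infty}e^{-2Kt}\bigl[A(g_t)+B(g_t)\bigr]\dt,
\end{equation*}
where $A(g_t)=2\int\Phi''(g_t)\bigl(\Gamma_2(g_t)-K\Gamma(g_t)\bigr)\dm\ge 0$ is the $\Gamma_2$-deficit of $g_t$ weighted by $\Phi''(g_t)\ge 0$, and $B(g_t)\ge 0$ is a quadratic expression in $\Gamma(g_t)$ weighted by $-(1/\Phi'')''(g_t)\ge 0$. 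The hypothesis that $f$ attains equality in \eqref{eq1-prop:phi:intro} forces $A(g_t)=B(g_t)=0$ for a.e.\ $t>0$.

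Passing $t\to 0^+$ in $A(g_t)=0$ and integrating by parts against the weight $\Phi''(f)$ gives $\int(\Delta\Phi'(f))^2\dm=K\int\Gamma(\Phi'(f))\dm$, i.e.\ condition~(2) of Theorem~\ref{th3-intro} applied to $u=\Phi'(f)$. Theorem~\ref{th3-intro} then yields claim~(2) of the present statement and, via its items~(13)--(14), the product splitting $\ms\cong(\R,|\cdot|,\phi_K\,\d t)\times(Y,\d_Y,\mm_Y)$ together with the representation $\Phi'(f)(r,y)=\int_0^r h(s)\,\d s$ for a non-negative $h$. Since $\Phi'(f)$ is additionally a $(-\Delta)$-eigenfunction of eigenvalue $K$ by item~(3) of Theorem~\ref{th3-intro}, its $r$-derivative $h(r)$ must be an Ornstein--Uhlenbeck eigenfunction of eigenvalue $0$ on $(\R,\phi_K\,\d t)$, hence constant; consequently $\Phi'(f)(r,y)=a_0 r$. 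Strict convexity of $\Phi$ makes $\Phi'$ injective, so $f$ depends only on $r$ and is monotone, and Mehler's formula on $(\R,\phi_K\,\d t)$ then shows that every monotone 1D function saturates the 1-Bakry--\'Emery inequality, proving claim~(1).

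For claim~(3), the vanishing of $B(g_0)$ combined with $\Gamma(f)>0$ on a set of positive $\mm$-measure forces $(1/\Phi'')''\equiv 0$ on the essential range of $f$, which is a non-degenerate interval as $f$ is non-constant. Hence $1/\Phi''(x)=\alpha x+\beta$ on that interval; separating into the cases $\alpha=0$ and $\alpha\neq 0$ and integrating twice produces, up to affine changes of $x$ and additive/multiplicative constants, $\Phi(x)=x^2$ or $\Phi(x)=x\log x$ respectively. Inverting the relation $\Phi'(f(r))=a_0 r$ in each case yields the announced explicit forms $f(r,y)=a_p r$ and $f(r,y)=e^{a_l r-a_l^2/(2K)}$, the additive constant in the log-Sobolev case being fixed by the normalisation $\int f\,\d\mm=1$.

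The main obstacle is the rigorous execution of Step~1 in the non-smooth $\rcd$ framework. The Bochner-type identity for $\tfrac{\d}{\dt}I_\Phi(g_t)$ relies on Gigli's second-order differential calculus on $\rcd$ spaces (in particular the improved $\Gamma_2$ formula carrying a Hessian term), and one must check that $\Phi(g_t)$, $\Phi''(g_t)\Gamma(g_t)$ and $\Phi'(f)$ lie in the appropriate function spaces for the integration-by-parts and the differentiation in $t$ to be legitimate. The standard remedy is a truncation--mollification scheme: approximate $\Phi$ by smooth functions with bounded higher derivatives, regularise $f$ by $P_\varepsilon f$, and pass to the limit using $L^1$-continuity of the Fisher-type integrands together with lower semi-continuity of $A$ and $B$; this preserves the deficit decomposition with both pieces non-negative and hence the subsequent rigidity analysis.
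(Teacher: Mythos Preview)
Your deficit decomposition is incorrect as stated. With $A(g_t)=2\int\Phi''(g_t)\bigl(\Gamma_2(g_t)-K\Gamma(g_t)\bigr)\dm$, the remainder $B(g_t):=-I_\Phi'(g_t)-2KI_\Phi(g_t)-A(g_t)$ works out to $4\int\Phi'''(g_t)\,\H_{g_t}(\nabla g_t,\nabla g_t)\dm+\int\Phi^{(4)}(g_t)\,\Gamma(g_t)^2\dm$, which is \emph{not} a non-negative quantity weighted by $-(1/\Phi'')''$. Test it on the log-Sobolev extremal $f(r)=e^{ar-a^2/(2K)}$ on the Gaussian line with $\Phi(x)=x\ln x$: here $\Gamma_2(f)-K\Gamma(f)=\|\H_f\|^2_{\rm HS}=a^4f^2>0$, so $A=2\int(1/f)\,a^4f^2\,\d\gamma_K=2a^4>0$, while $\Phi'''=-1/x^2$, $\Phi^{(4)}=2/x^3$ give $B=-4a^4+2a^4=-2a^4<0$. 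They cancel, consistent with zero total deficit, but the step ``$A(g_t)=B(g_t)=0$'' fails. Worse, $A(g_t)=0$ would force ${\bf\Gamma}_2(g_t)=K\Gamma(g_t)\,\mm$ and hence $\H_{g_t}=0$, i.e.\ $g_t$ affine, which excludes the log-Sobolev case altogether; so your route to ``$\Phi'(f)$ attains 2-BE equality'' does not go through.

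A Bochner-based argument can be salvaged, but only by splitting ${\bf\Gamma}_2=\bRic+\|\H\|^2_{\rm HS}\,\mm$ and keeping the Hessian with the $\Phi''',\Phi^{(4)}$ terms: the two genuinely non-negative pieces are the Ricci deficit $2\int\Phi''(g_t)\,\d\bigl(\bRic(\nabla g_t,\nabla g_t)-K\Gamma(g_t)\mm\bigr)$ and $\int\bigl[2\Phi''\|\H_{g_t}\|^2_{\rm HS}+4\Phi'''\H_{g_t}(\nabla g_t,\nabla g_t)+\Phi^{(4)}\Gamma(g_t)^2\bigr]\dm$, the latter non-negative precisely because $(1/\Phi'')''\le 0$. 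Its vanishing gives $(1/\Phi'')''=0$ on the range of $g_t$ and $\H_{g_t}=-(\Phi'''/\Phi'')\,\nabla g_t\otimes\nabla g_t$; the chain rule $\H_{\Phi'(g_t)}=\Phi''\H_{g_t}+\Phi'''\nabla g_t\otimes\nabla g_t$ then yields $\H_{\Phi'(g_t)}=0$, whence ${\bf\Gamma}_2(\Phi'(g_t))=K\Gamma(\Phi'(g_t))\,\mm$. This repaired version needs $\Phi\in C^4$. The paper instead avoids differentiating $I_\Phi$ and proves the \emph{pointwise} estimate $\Phi''(P_tf)\Gamma(P_tf)\le e^{-2Kt}P_t\bigl(\Phi''(f)\Gamma(f)\bigr)$ (Proposition~\ref{theorem:phi}) by chaining the 1-Bakry--\'Emery inequality, Jensen for the concave function $1/\Phi''$, and Cauchy--Schwarz; equality there immediately delivers the 1-BE equality for $f$ and $\Gamma(\Phi'(P_tf))\equiv c(t)$ (Corollary~\ref{coro:phi}), from which the splitting and the form of $f$ follow via Theorem~\ref{th3-intro}. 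That route needs only $\Phi\in C^2$ and gives claim~(1) directly, without your detour through Mehler's formula.
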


\begin{remark}
It is known that  Bobkov's isoperimetric inequality yields some  important inequalities (even without any curvature condition).  For example,  from  \cite[Theorem 3.2]{BakryLedoux96} we  know  $K$-Bobkov's inequality yields  the  { $K$-logarithmic Sobolev inequality}
\begin{equation}\label{eq:lsi}
\int f\ln f\,\d \mm \leq \frac1{2K} \int \frac{|\nabla f|^2}{f}\,\d \mm
\end{equation}
for any non-negative locally Lipschitz function $f$ with $\int f\,\d \mm=1$.  It is known  (c.f. Lott-Villani \cite{Lott-Villani07},  Gigli-Ledoux \cite{GigliLedoux})  that  the $K$-logarithmic Sobolev inequality implies the  { $K$-Talagrand inequality}
\begin{equation}\label{eq:ti}
W^2_2(f\mm, \mm) \leq \frac 2K \int f\ln f\,\d \mm 
\end{equation}
for any $f$ with $\int f\,\d \mm=1$. It is known (using Hamilton-Jacobi semigroup, c.f.  \cite[Theorem 1.8]{LV-HJ}  and   \cite[Section 3]{AGS-C})  that  the $K$-Talagrand inequality implies the $K$-Poincar\'e inequality (or $K$-spectral gap)
\begin{equation}\label{eq:pi}
\int f^2\,\d \mm \leq \frac1{K} \int |\nabla f|^2\,\d \mm
\end{equation}
for any  locally Lipschitz function $f$ with $\int f\,\d \mm=0$.

 Inspired by the implications of  Bobkov's inequality discussed above,  one would ask whether we can deduce   the rigidity of  the Poincar\'e inequality and the log-Sobolev inequality  (Theorem \ref{th2-intro}) from the rigidity of  Bobkov's inequality (Theorem \ref{th1-intro})   or not.  
 For example, assume there is a non-constant  function attaining  the equality in the  Poincar\'e inequality, then  $\ms$ does not support $(K+\frac 1n)$-Bobkov's inequality for any $n \in \N$. So for any $n\in N$ there is $f_n \in \Lip(X, \d) \cap L^\infty$ such that 
\begin{equation}\label{eq:inversebi}
\sqrt{\frac{K}{2\pi}} \geq  I_{K+\frac 1n} \left (\int f_n \,\d \mm \right ) > J_{K+\frac 1n}(f_n) \geq 0.
\end{equation}
Thus there is a subsequence of $(f_n)$  converging to some  $f$  in $L^2$. Letting $n \to \infty$ in \eqref{eq:inversebi}, by continuity of $(K, t) \mapsto I_K(t)$, Fatou's lemma and lower semi-continuity of $\E$, we obtain
\[
I_{K} \left (\int f \,\d \mm \right ) \geq J_{K}(f).
\]
Combining with $K$-Bobkov's inequality  we get  $I_{K} \left (\int f \,\d \mm \right )=J_K(f)$.

However, we can not assert that $f$ is not constant,  because we do not know much about $(f_n)$ except  its  existence.
\end{remark}

\begin{remark}
Concerning an extremal function $f$ of the log-Sobolev inequality, it was conjectured by Otto-Villani  \cite[Page 391]{OttoVillani00} that $\ln f$ attains the equality in  the $\Gamma_2$-inequality $\Gamma_2\geq K\Gamma$. Unfortunately, due to lack of regularity,   we can not use  second-order differentiation formula  as suggested in \cite{OttoVillani00} on curved spaces. 

Recently,  Ohta-Takatsu  \cite{Ohta2019} give a rigorous proof to the rigidity of the  log-Sobolev inequality on smooth metric measure spaces, using a localization argument which benefits from a breakthrough of Klartag \cite{KlartagNeedle}.  As mentioned in \cite[\S 4]{Ohta2019},  the rigidity of the  log-Sobolev inequality on  $\rcd$ spaces was an open problem due to lack of `needle decomposition' on dimension-free $\rcd$ spaces.

Thus the novelty of our result is that it gives an affirmative answer to the  conjecture of Otto-Villani, and extends the result of Ohta-Takatsu to $\rcd$ spaces.
\end{remark}

\subsection{Structure of the paper}
In the first part of Section \ref{section:be} we review some basic results about the non-smooth Bakry-\'Emery theory and  calculus on metric measure spaces.  Most of these results can be found in  the papers of  Ambrosio-Gigli-Savar\'e \cite{AGS-B, AGS-C,  AGS-M}, Gigli  \cite{G-N} and Savar\'e \cite{S-S}. In the second part, we study the cases of equality in the 2-Bakry-\'Emery inequality.

In Section \ref{sec:be} we prove the rigidity of the 1-Bakry-\'Emery inequality.  This  extends  the result  of Ambrosio-Bru\'e-Semola \cite{ABS19} to dimension-free $\rcd$  spaces with $K>0$.  Some important  tools used there are the continuity equation theory in the non-smooth framework developed by Ambrosio-Trevisan \cite{AT-W}, and the functional analysis tools by Gigli \cite{G-N}.  We remark that the proof in \cite{ABS19}  relies  on a  two-sides heat kernel estimate, and it seems that the proof works only for $K=0$ case. 

In Section \ref{sec:funct}, we apply the results obtained in the previous two sections to study the rigidity of  Bobkov's Gaussian isoperimetric inequality and $\Phi$-inequalities. The  arguments  in this section are not totally new,  similar  semigroup arguments  were  used by  Carlen-Kerce \cite{CarlenKerce01}, Chafa\"i \cite{Chafai04} etc. in the study of related problems on smooth metric measure spaces.
\section{Synthetic curvature-dimension conditions}\label{section:be}

\subsection{$\Gamma_2$-calculus on metric measure spaces}

\begin{definition}[Lott-Sturm-Villani's curvature-dimension condition, c.f. \cite{Lott-Villani09, S-O1}]
 We say that a metric measure space $\ms$  is $\cd$ for some $K\in \R$ if  the entropy  functional  $\ent{\mm}$ is $K$-displacement convex  on  the $L^2$-Wasserstein space $(\mathcal{P}_2(X), W_2)$. This means,  for any two probability measures $\mu_0, \mu_1 \in \mathcal {P}_2 (X)$ with $\mu_0, \mu_1 \ll \mm$, there  is  a $L^2$-Wasserstein geodesic $(\mu_t)_{t\in [0,1]}$ such that 
 \begin{equation}\label{eq1.5-intro}
\frac K2 t(1-t)W^2_2(\mu_0, \mu_1)+{\rm Ent}_\mm(\mu_t) \leq t{\rm Ent}_\mm(\mu_1)+(1-t){\rm Ent}_\mm(\mu_0)
\end{equation}
where ${\rm Ent}_\mm(\mu_t)$ is defined as $\int \rho_t \ln \rho_t\,\d \mm$ if $\mu_t=\rho_t\,  \mm$, otherwise  ${\rm Ent}_\mm(\mu_t)=+\infty$.
\end{definition}

As we introduced in the Introduction, the energy  form $\E(\cdot)$  is defined on $L^2(X,\mm)$ by 
\begin{eqnarray*}
\E(f) &:=& \inf\!\Big\lbrace \liminf_{n \to \infty} \int_X \lip{f_n}^2\d\mm : f_n \in \Lip_{b}(X),\ \! f_n \to f \text{ in } L^2(X,\mm)\Big\rbrace\\
&=&\int_X\big| \nabla  f|^2\,\d\mm
\end{eqnarray*}
where $\lip{f}(x) := \limsup_{y\to x} \vert f(x) - f(y)\vert/\!  \d(x,y)$ denotes the {local Lipschitz slope} at $x\in X$  and $|\nabla  f|$ denotes the minimal weak upper gradient. We refer the readers  to \cite{AGS-C, C-D} for details about the  theory of Sobolev space on metric measure spaces.

We say that $\ms$ is an $\rcd$ space if it is $\cd$,  and $\E(\cdot)$ is a quadratic form.  In this case, it is known that  $\E$  defines a quasi-regular, strongly local, conservative Dirichlet form admitting a carr\'e du champ $\Gamma(f):=|\nabla  f|^2$ (c.f. \cite{AGS-B} and \cite{BE-D}).
Denote $\V={\rm D}(\E)=\{f: \E(f)<\infty\}$. For any $f, g\in \V$,   by polarization, we define $$\Gamma(f, g) :=\frac14 \big ( \Gamma(f+g)-\Gamma(f-g)\big ),$$ and  $$\E(f,g)=\int \Gamma(f,g)\,\d\mm.$$

The heat flow $(P_t)$  is defined as the  gradient flow  of $\E$ in $L^2(\mm)$. It is known that  $P_t$ is linear and self-adjoint (c.f. \cite{AGS-M}).  We recall the following regularization properties of $(P_t)$, ensured by the theory of
gradient flows and maximal monotone operators.
\begin{lemma}[A priori estimates]\label{lemma:reg}
For every $f \in  L^2(\mm)$ and $t > 0$ it holds
\begin{enumerate}
\item $\|P_t f \|_{L^2} \leq  \| f\|_{L^2}$;
\item $\E(P_t f) \leq \frac 1{2t} \| f\|^2_{L^2}$;
\item $\|\Delta P_t f\|_{L^2} \leq \frac 1{t} \| f\|_{L^2}$.
\end{enumerate}
\end{lemma}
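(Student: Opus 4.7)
The plan is to derive all three estimates from the theory of gradient flows of convex, lower semicontinuous functionals on the Hilbert space $L^2(X,\mm)$, applied to $\E$. Since $\ms$ is $\rcd$, the form $\E$ is a closed convex quadratic functional, and $(P_t)$ is its $L^2$-gradient flow with generator $\Delta$. The two differential identities
\begin{equation*}
\ddt \tfrac12\|P_t f\|^2_{L^2} = -\E(P_t f), \qquad \ddt \E(P_t f) = -\|\Delta P_t f\|^2_{L^2},
\end{equation*}
together with the monotonicity (non-increasingness in $t$) of $\E(P_t f)$ and of $\|\Delta P_t f\|_{L^2}$, will do most of the work. These monotonicities are the standard ``regularizing effect'' for gradient flows of convex functionals: the slope along the flow is non-increasing.

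Estimate (1) is immediate from the first identity and the nonnegativity of $\E$. For (2), I would integrate the first identity over $[0,t]$ and use monotonicity of $s\mapsto \E(P_s f)$:
\begin{equation*}
\tfrac12\|f\|^2_{L^2} - \tfrac12\|P_t f\|^2_{L^2} = \int_0^t \E(P_s f)\,\d s \geq t\,\E(P_t f),
\end{equation*}
then drop the nonnegative term on the left.

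For (3), the same scheme must be applied in two steps, because $\E(f)$ may be infinite when $f \in L^2(\mm)\setminus \V$. First, applying (2) at time $t/2$ gives $\E(P_{t/2}f)\leq \|f\|^2_{L^2}/t$. Next, integrate the second identity from $t/2$ to $t$ and use monotonicity of $s\mapsto \|\Delta P_s f\|_{L^2}^2$:
\begin{equation*}
\E(P_{t/2}f)-\E(P_t f) = \int_{t/2}^t \|\Delta P_s f\|^2_{L^2}\,\d s \geq \tfrac{t}{2}\|\Delta P_t f\|^2_{L^2}.
\end{equation*}
Combining the two bounds yields $\|\Delta P_t f\|^2_{L^2}\leq \|f\|^2_{L^2}/t^2$, which is (3).

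The only nontrivial ingredient is the monotonicity of $s\mapsto \|\Delta P_s f\|_{L^2}$. On $\rcd$ spaces this is part of the Ambrosio--Gigli--Savar\'e construction of the heat flow, and more generally is a classical consequence of the Brezis--K\=omura theory of gradient flows of convex functionals on Hilbert spaces (the element of minimal norm in the subdifferential along the flow is non-increasing in $t$). No further obstacle is expected; the proof is a clean packaging of these standard facts.
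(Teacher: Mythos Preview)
Your approach is exactly what the paper intends: it states the lemma without proof, citing only ``the theory of gradient flows and maximal monotone operators,'' and your argument is a clean instantiation of that theory. So there is no discrepancy of method to discuss.

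One small arithmetic slip in part (3): your two-step argument actually yields $\|\Delta P_t f\|_{L^2}^2 \leq \tfrac{2}{t^2}\|f\|_{L^2}^2$, not $\tfrac{1}{t^2}$. Indeed, from $\tfrac{t}{2}\|\Delta P_t f\|_{L^2}^2 \leq \E(P_{t/2}f) \leq \tfrac{1}{t}\|f\|_{L^2}^2$ you get the constant $\sqrt{2}/t$. To recover the stated constant $1/t$, either invoke the spectral theorem (here $\E$ is quadratic, so $\sup_{\lambda\geq 0}\lambda e^{-\lambda t}=1/(et)\leq 1/t$), or use the weighted-integral refinement
\[
\frac{t^2}{2}\|\Delta P_t f\|_{L^2}^2 \leq \int_0^t s\,\|\Delta P_s f\|_{L^2}^2\,\d s = -t\,\E(P_t f) + \int_0^t \E(P_s f)\,\d s \leq \frac{1}{2}\|f\|_{L^2}^2.
\]
This is a cosmetic fix; the precise constant is irrelevant for every application in the paper.
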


\bigskip

 Let us recall  the notion of  non-smooth vector fields  introduced by Weaver in  \cite{Weaver01} (see also \cite{AT-W} and  \cite{G-N}).
 
 \begin{definition}
 We say that a linear functional $b:  \Lip_{\rm bs} (X, \d) \mapsto L^0(X, \mm)$ is 
  an $L^2$-derivation, and write $b\in L^2(TX)$ (or $b\in L^2_\loc (TX)$ resp.),  if it satisfies the following properties.
  \begin{enumerate}
  \item  Leibniz rule: for any $f, g\in \Lip_{\rm bs}(X, \d)$ it holds
  \[
  b(f g) = b(f) g + f b(g).
  \]
  \item  $L^2$-bound: there exists $g \in  L^2(X, \mm)$ (or  $L^2_\loc (X, \mm)$ resp.) such that
  \[
  |b(f )| \leq g |\lip{f }|,\qquad\mm-\text{a.e.~ on}~ X, 
  \]
for any $f \in \Lip_{\rm bs}$ and we denote by $|b|$ the minimal (in the $\mm$-a.e. sense) $g$ satisfying such property. 
  \end{enumerate}
 \end{definition}

In   \cite{G-N} Gigli introduces  the so-called tangent and cotangent modules over  metric measure spaces,  and proves the identification results between $L^2$-derivations and elements of the tangent module $L^2(TX)$.

\begin{proposition}[Section 2.2, \cite{G-N}]
Let    $\E$   be  the Dirichlet form associated with the  metric measure space $\ms$, and let   $\Gamma$  be the  carr\'e du champ defined on $\V$. Then 
there exists  a $L^\infty$-Hilbert module  $L^2(TX)$ satisfying the following properties.
\begin{enumerate}
\item For any $f \in \V $, there is  a derivation $\nabla f  \in L^2(TX)$ defined  by  the formula 
\[
\nabla f(g) =\Gamma(f, g),\qquad\forall g\in \Lip(X, \d).
\]
\item  $L^2(TX)$ is a module over the commutative ring $L^\infty(X, \mm)$.

\item  $L^2(TX)$ is a Hilbert space equipped with the norm $\| \cdot \|$ which is compatible with the semi-norm $\E$ on $\V$, i.e.  it holds  the following  correspondence 
\[
\V \ni f \mapsto \nabla f \in L^2(TX), \quad \text{s.t.} \quad \| \nabla f\|^2=\E(f).
\]

\item The norm $\| \cdot \|$ is  induced  by a pointwise inner product $\la \cdot, \cdot \ra$ satisfying
\[
\la \nabla f, \nabla g \ra=\Gamma(f,g),\qquad\mm-\text{a.e.}
\]
and
\[
\la h \nabla f, \nabla g \ra=h\la  \nabla f, \nabla g \ra, \qquad \mm-\text{a.e.}
\]
for any $f, g\in \V$ and $h\in L^\infty_\loc$.

\item  $L^2(TX)$ is generated by $\{\nabla g: g\in \V\}$ in the following sense. For any $v\in L^2(TX)$, there exists a sequence $v_n=\sum_{i=1}^{M_n} a_{n,i} \nabla g_{n,i}$ with $a_{n,i}\in L^\infty$ and $g_{n,i}\in \V$,  such that $\| v-v_n\| \to 0$ as $n\to \infty$.
\end{enumerate}
\end{proposition}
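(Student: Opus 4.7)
The plan is to follow Gigli's construction from \cite{G-N} and build $L^2(TX)$ as the dual of a completed quotient of ``formal gradients'', then verify the listed properties one by one. First I would introduce the pre-cotangent module $\mathrm{Pcm}$ whose elements are formal finite sums $\omega = \sum_{i=1}^n \nchi_{A_i} \d f_i$ with $(A_i)$ a Borel partition of $X$ and $f_i \in \V$. On such an $\omega$ I would define the pointwise object
\[
|\omega|^2 := \sum_{i=1}^n \nchi_{A_i} \Gamma(f_i)
\]
and the global seminorm $\|\omega\|^2 := \int |\omega|^2 \dm$. Subadditivity and positive homogeneity follow from Cauchy--Schwarz and the triangle inequality for $\Gamma$, which are available because $\E$ is a quadratic Dirichlet form admitting a carr\'e du champ.

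Next I would quotient $\mathrm{Pcm}$ by the kernel of $\|\cdot\|$ and complete in $L^2$ to obtain a Banach space $L^2(T^*X)$. The $L^\infty$-module structure is defined on simple $\omega$ by $h\cdot \omega := \sum_i \nchi_{A_i} h\,\d f_i$ for $h\in L^\infty(X,\mm)$, and is extended to the completion via the elementary bound $\|h\omega\|\leq \|h\|_\infty \|\omega\|$. Because $\E$ is a quadratic form, the pointwise norm satisfies the parallelogram identity and therefore arises from a pointwise bilinear pairing, which on simple elements can be read off from
\[
\langle \nchi_{A} \d f, \nchi_{B} \d g\rangle := \nchi_{A\cap B}\, \Gamma(f,g),
\]
and extended by bilinear continuity. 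This realizes $L^2(T^*X)$ as an $L^\infty$-Hilbert module. Taking $L^2(TX)$ to be its dual module (equivalently, identifying it with $L^2(T^*X)$ via Riesz representation, since the module is Hilbert) yields simultaneously items (2), (3), (4) of the proposition.

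For (1) I would define $\nabla f$ as the element of $L^2(TX)$ corresponding to $\d f := \nchi_X\, \d f \in L^2(T^*X)$ under the Riesz identification; then by construction $\langle \nabla f,\nabla g\rangle = \Gamma(f,g)$ $\mm$-a.e.\ and $\|\nabla f\|^2 = \E(f)$. The derivation property $\nabla f(g) = \Gamma(f,g)$ is just the pairing, and the Leibniz rule for $\nabla f$ on products transports from the Leibniz rule for $\Gamma$ on $\V\cap L^\infty$. Property (5) then holds essentially by construction: finite combinations $\sum a_{n,i} \nabla g_{n,i}$ are precisely the image of $\mathrm{Pcm}$ under the Riesz isomorphism, which is dense in the completion by definition.

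The main obstacle in this program is verifying the locality required to make the construction well-defined on equivalence classes. Concretely, one has to show that if two simple representations $\sum \nchi_{A_i}\d f_i$ and $\sum \nchi_{B_j}\d g_j$ agree $\mm$-a.e.\ in the sense of the seminorm, then the inner product defined above is insensitive to the choice of partition and to modifications of $f_i$ on $A_i$-null sets. These locality facts, together with the coincidence $\nchi_A \Gamma(f) = \nchi_A \Gamma(g)$ whenever $f=g$ $\mm$-a.e.\ on $A$, rest on the strong locality of $\E$ and on the derivation property of $\Gamma$ on $\V$; both are available in the $\rcd$ setting, so the construction goes through and delivers all five claimed properties.
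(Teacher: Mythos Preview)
The paper does not give its own proof of this proposition: it is stated as a citation of Gigli's construction (Section~2.2 of \cite{G-N}) and used as background. Your sketch correctly outlines that construction---building the cotangent module as the completion of a quotient of formal sums $\sum \nchi_{A_i}\,\d f_i$, reading off the Hilbert-module structure from the quadraticity of $\E$ and the locality of $\Gamma$, and obtaining $L^2(TX)$ by Riesz duality---so there is nothing to compare and no gap to flag.
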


Via integration by parts, we can define the  divergence of vector fields.

\begin{definition}
Let $b\in L^2_\loc (TX)$. We say that $b \in {\rm D}(\div)$ if there exists $g \in  L^2(X, \mm)$ such that
\[
\int \la b, \nabla f \ra\dm=\int b(f)\dm =- \int  gf \dm \qquad \text{for any }~~f \in  \Lip_{\rm bs} (X, \d). 
\]

By a density argument it is easy to check that such function $g$ is unique (when it exists) and we will denote it by $\div(b)$.
\end{definition} 
In particular, the Dirichlet form $\E$ induces a densely defined selfadjoint operator $\Delta:{\rm D}(\Delta)\subset \V \mapsto L^2$ satisfying
$\E(f,g)=-\int g\Delta f\,\d\mm$
for all $g\in \V$.

Put 
\begin{equation*}
 \Gamma_2(f; \varphi):=\frac12 \int \Gamma(f) \Delta \varphi\,\d\mm-\int \Gamma(f, \Delta f) \varphi\,\d\mm
\end{equation*}
and 
$
{\rm D}(\Gamma_2):=\Big\{ (f, \varphi): f,\varphi\in {\rm D}(\Delta), \
\Delta f\in\V, \
\varphi,\Delta\varphi \in {L^\infty}\Big\}
$.

It is proved in \cite{AGS-M} (and also \cite{AGMR-R} for $\sigma$-finite case) that $\rcd$  implies the  following non-smooth Bakry-\'Emery condition $\be$.

\begin{proposition}[The Bakry-\'Emery condition]\label{def-bevar}
Let $\ms$ be an $\rcd$ space. Then the corresponding  Dirichlet form    $\E$ satisfies the following   $\be$ condition  
\begin{equation}\label{eq-bevar}
 \Gamma_2(f; \varphi) \geq K\int \varphi \Gamma(f)\dm
\end{equation}
for all $(f, \varphi)\in {\rm D}(\Gamma_2)$ with $\varphi \geq 0$.

\end{proposition}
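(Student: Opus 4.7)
The plan is to derive the weak-form $\Gamma_2$ inequality from the $K$-convexity of the entropy along $W_2$-geodesics (the definition of $\cd$) by passing through two intermediate, equivalent formulations: the EVI$_K$ characterization of the heat flow as the Wasserstein gradient flow of $\ent{\mm}$, and the pointwise Bakry-Émery gradient estimate at the level of $P_t$. This is essentially the chain of equivalences proved in \cite{AGS-M}; I sketch only the route, not the underlying regularity machinery.

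First, I would invoke the identification, established in \cite{AGS-M}, of the $L^2$-heat flow $(P_t)$ with the EVI$_K$-gradient flow of the relative entropy on $(\mathcal{P}_2(X), \ws)$. From the EVI$_K$ property one deduces the Wasserstein contraction
\[
\ws(P_t^*\mu, P_t^*\nu) \leq e^{-Kt}\,\ws(\mu,\nu)
\]
for all $\mu,\nu\in \mathcal{P}_2(X)$, where $P_t^*$ denotes the dual action on measures. Next, by Kuwada's duality lemma adapted to the metric measure setting, this contraction is equivalent to the $2$-Bakry-Émery pointwise gradient estimate
\[
\Gamma(P_t f) \leq e^{-2Kt} P_t\bigl(\Gamma(f)\bigr),\qquad \mm\text{-a.e.},
\]
for every $f\in\V$ and every $t>0$.

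The second step is to turn this integrated/semigroup estimate into the differential $\Gamma_2$ inequality stated in the proposition. Fix $(f,\varphi)\in {\rm D}(\Gamma_2)$ with $\varphi\geq 0$. Consider the auxiliary function
\[
F(t) := \int \varphi\, P_t\bigl(\Gamma(P_{s-t} f)\bigr)\,\d\mm,\qquad t\in[0,s],
\]
for some small $s>0$. Using the regularizing properties of $(P_t)$ from Lemma \ref{lemma:reg} and the fact that $\Delta f\in\V$, one can justify differentiating under the integral and compute
\[
\frac{\d}{\dt} F(t) = 2\int \varphi\, P_t\bigl(\Gamma(P_{s-t}f,\,\Delta P_{s-t}f) - \tfrac12 \Delta\Gamma(P_{s-t}f)\bigr)\,\d\mm,
\]
which after self-adjointness of $P_t$ and the definition of $\Gamma_2(\,\cdot\,;\varphi)$ yields $F'(t) = -2\,\Gamma_2(P_{s-t}f;\, P_t\varphi)$ in the appropriate weak sense. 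On the other hand, the gradient estimate above can be rewritten, after integration against $\varphi$ and use of self-adjointness, as
\[
\int \varphi\,\Gamma(P_s f)\,\d\mm \leq e^{-2Ks}\int \varphi\,P_s\bigl(\Gamma(f)\bigr)\,\d\mm,
\]
i.e.\ $F(s) \leq e^{-2Ks} F(0)$. Dividing by $s$ and letting $s\downarrow 0$ produces the inequality $-F'(0^+)\geq -2K F(0)$, that is,
\[
\Gamma_2(f;\varphi) \geq K\int \varphi\, \Gamma(f)\,\d\mm,
\]
which is the required $\be$ condition.

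The main obstacle is the justification of the differentiation under the integral and of the limit $s\downarrow 0$: one must ensure that all the relevant $L^2$ and $L^\infty$ norms are controlled, using the bounds $\Delta f\in\V$, $\varphi,\Delta\varphi\in L^\infty$, and the contractivity/regularizing estimates of Lemma \ref{lemma:reg}. In the smooth setting this is routine; in the metric measure framework the necessary calculus rules (chain rule for $\Gamma$, commutation of $P_t$ with $\Gamma$ through the gradient estimate, and the duality with $\Delta$) must be invoked from the non-smooth Dirichlet form theory, which is exactly the content of the references cited in Section \ref{section:be}.
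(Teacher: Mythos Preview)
The paper does not give its own proof of this proposition; it simply records it as a result of \cite{AGS-M} (and \cite{AGMR-R}). Your sketch follows exactly the route taken there: $\rcd \Rightarrow$ EVI$_K$ for the heat flow $\Rightarrow$ $W_2$-contraction $\Rightarrow$ (via Kuwada duality) the pointwise gradient estimate $\Gamma(P_t f)\le e^{-2Kt}P_t\Gamma(f)$ $\Rightarrow$ the weak $\Gamma_2$ inequality by differentiating an interpolation functional. So at the level of strategy you are aligned with the cited proof, and the last step is precisely the computation the paper itself uses (in the reverse direction) in the proof of Lemma~\ref{lemma}, equation~\eqref{eq3:lemma}.

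There are, however, two sign slips in your interpolation step that you should fix. With $F(t)=\int P_t\varphi\,\Gamma(P_{s-t}f)\,\d\mm$ one computes
\[
F'(t)=\int \Delta P_t\varphi\,\Gamma(P_{s-t}f)\,\d\mm-2\int P_t\varphi\,\Gamma(P_{s-t}f,\Delta P_{s-t}f)\,\d\mm
=+2\,\Gamma_2(P_{s-t}f;P_t\varphi),
\]
not $-2\Gamma_2$. Likewise the gradient estimate gives $F(0)=\int\varphi\,\Gamma(P_s f)\,\d\mm\le e^{-2Ks}\int\varphi\,P_s\Gamma(f)\,\d\mm=e^{-2Ks}F(s)$, the reverse of what you wrote. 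The clean way to conclude is to observe that the gradient estimate makes $t\mapsto e^{-2Kt}F(t)$ non-decreasing on $[0,s]$, whence $F'(t)\ge 2KF(t)$, i.e.\ $\Gamma_2(P_{s-t}f;P_t\varphi)\ge K\int P_t\varphi\,\Gamma(P_{s-t}f)\,\d\mm$; taking $t=s$ and letting $s\downarrow0$ (or directly differentiating the two sides of the gradient estimate at $s=0$ after pairing with $\varphi$) gives \eqref{eq-bevar}. Your two sign errors happen to cancel in the final line, but the displayed intermediate formulas are incorrect as written.
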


Under some natural regularity assumptions on the  distance canonically associated with the Dirichlet form,  the converse implication is also true, see \cite{AGS-B} for more details.

\bigskip

 We have the following crucial properties  obtained   by Savar\'e  \cite{S-S} and  Gigli \cite{G-N}.  Recall that  the space of test functions is defined as  ${\rm TestF}:= \big\{f\in {\rm D}({ \Delta})\cap L^\infty:   \Delta f\in \V, \, \Gamma(f)\in L^\infty\big\}$.  It is known that ${\rm TestF}$ is dense in $\V$ (c.f. \cite[(3.1.6)]{G-N}).
 
\begin{proposition}\label{prop: self}
Let $\ms$ be an $\rcd$ space. Then
\begin{enumerate}

\item For any $f\in {\rm TestF}$, we have $\Gamma(f) \in \V$ and 
\[
\E\Big(\Gamma(f)\Big) \leq -\int \Big (2K\Gamma(f)^2+2\Gamma(f) \Gamma(f, \Delta f) \Big)\dm.
\]
\item For every $f\in{\rm D}(\Delta)$, we have $\Gamma(f)^{1/2}\in\V$ and
$$\E\Big(\Gamma(f)^{1/2}\Big)\le \int (\Delta f)^2\,\d\mm- K\cdot \E(f).$$

\item For any $f \in {\rm D}({\Delta})$ there  is  a continuous symmetric $L^\infty$-bilinear map $\H_f(\cdot, \cdot)$ defined on $[L^2(TX)]^2$,   with values in $L^0(X, \mm)$ (c.f. \cite[Corollary 3.3.9]{G-N}).
In particular, if $f, g, h \in  {\rm TestF}$ (c.f.   \cite[Proposition 3.3.22]{G-N},  \cite[Lemma 3.2]{S-S}),  $\H_f(\cdot, \cdot)$ is given by the following formula:
\begin{equation}\label{eq:hessian}
2\H_f(\nabla g, \nabla h)=\Gamma\big(g, \Gamma(f, h)\big) +\Gamma\big(h, \Gamma(f, g)\big)-\Gamma\big(f, \Gamma(g,h)\big).
\end{equation}
\end{enumerate}
\end{proposition}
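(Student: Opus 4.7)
The plan is to derive all three items from a single input---the Bakry-Émery condition of Proposition \ref{def-bevar}---applied through carefully chosen test functions and then self-improved, in the spirit of Bakry's classical $\Gamma_2$-calculus.

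For part (1), first note that $f \in {\rm TestF}$ guarantees $\Gamma(f) \in L^\infty$, so the only issue is placing $\Gamma(f)$ in $\V$. The natural strategy is to plug $\varphi := \Gamma(f)$ into the $\be$ inequality $\Gamma_2(f;\varphi) \geq K\int \varphi\,\Gamma(f)\,\dm$. Since $\Gamma(f)$ is not a priori regular enough to lie in ${\rm D}(\Gamma_2)$, I would first regularize via the heat flow, setting $\varphi_\eps := P_\eps(\Gamma(f))$, which by Lemma \ref{lemma:reg} satisfies $\varphi_\eps \in {\rm D}(\Delta)$ with $\Delta\varphi_\eps$ controlled for $\eps > 0$. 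The Bakry-Émery inequality then reads
\[
\tfrac12 \int \Gamma(f)\,\Delta\varphi_\eps\,\dm - \int \varphi_\eps\,\Gamma(f,\Delta f)\,\dm \geq K\int \varphi_\eps\,\Gamma(f)\,\dm.
\]
Letting $\eps \to 0$ and simultaneously identifying $-\int \Gamma(f)\Delta\varphi_\eps\,\dm = \E(\Gamma(f),\varphi_\eps)$, one both recovers $\Gamma(f) \in \V$ (from the uniform $\E$-bound on $\varphi_\eps$) and the claimed inequality.

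For part (2), the central tool is the pointwise self-improved $\Gamma_2$-inequality
\[
\Gamma_2(f) \geq K\,\Gamma(f) + \frac{\Gamma(\Gamma(f))}{4\,\Gamma(f)} \quad\text{on}~\{\Gamma(f) > 0\},
\]
an infinitesimal avatar of the 1-Bakry-Émery inequality. In the smooth setting this is Bakry's classical self-improvement, obtained by applying $\Gamma_2 \geq K\Gamma$ to compositions $\phi\circ f$ and optimizing over $\phi$. In the non-smooth $\rcd$ framework I would replicate this by testing Proposition \ref{def-bevar} against suitable combinations of test functions and exploiting the chain and Leibniz rules for $\Gamma$ available in Gigli's module calculus. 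Since $\Gamma(\Gamma(f)^{1/2}) = \Gamma(\Gamma(f))/(4\,\Gamma(f))$ where $\Gamma(f) > 0$, integrating the improved bound and using $\int \Gamma_2(f)\,\dm = \int (\Delta f)^2\,\dm$ (a double integration by parts requiring $\Delta f \in \V$, ensured for the relevant approximants of $f$) yields the claim. To handle the degenerate set $\{\Gamma(f) = 0\}$ and avoid division, I would regularize through $(\Gamma(f)+\eps)^{1/2}$, bound the Dirichlet energy of the approximants uniformly, and send $\eps \to 0^+$ via lower semicontinuity of $\E$ and dominated convergence.

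For part (3), I would first \emph{define} $\H_f$ on pairs $(\nabla g, \nabla h)$ with $g, h \in {\rm TestF}$ through the right-hand side of \eqref{eq:hessian}. This expression makes sense almost everywhere because each of the three $\Gamma$-products $\Gamma(f,h)$, $\Gamma(f,g)$, $\Gamma(g,h)$ lies in $\V$ by part (1) applied to the test functions involved. The task is then to verify symmetry in $(\nabla g, \nabla h)$---which follows from an algebraic rearrangement of the three products---and, most crucially, a continuity estimate of the form
\[
\int |\H_f(\nabla g, \nabla h)|\,\dm \leq C_{K,f}\,\|\nabla g\|\,\|\nabla h\|
\]
permitting extension to all of $L^2(TX)^2$ by density of $\{\nabla g : g \in {\rm TestF}\}$. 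The main obstacle I anticipate is precisely this continuity: a naive pointwise bound on \eqref{eq:hessian} loses a derivative, so one must invoke an integrated Bochner-type identity---derived by polarizing the inequality of part (1)---together with duality in the tangent module to extract the needed $L^2$-estimate. Once continuity is in hand, $L^\infty$-bilinearity (via Leibniz for $\Gamma$) and symmetry pass to the extension, completing the construction of $\H_f$ on the full tangent module.
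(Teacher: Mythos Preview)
The paper does not prove this proposition at all: it is stated as a collection of known results due to Savar\'e \cite{S-S} and Gigli \cite{G-N}, with explicit references given item by item. So there is no ``paper's own proof'' to compare against---the proposition is imported wholesale from the literature.

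That said, your sketch is in the right spirit, and indeed mirrors the strategy of the cited references, but it glosses over the genuine technical difficulties that make those papers nontrivial. Two concrete gaps:

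For part (1), your regularization $\varphi_\eps = P_\eps\Gamma(f)$ need not satisfy $\Delta\varphi_\eps \in L^\infty$, which is required for membership in ${\rm D}(\Gamma_2)$; Lemma \ref{lemma:reg} only gives $L^2$ control of $\Delta P_\eps$. Moreover, the identity $-\int \Gamma(f)\Delta\varphi_\eps\,\dm = \E(\Gamma(f),\varphi_\eps)$ presupposes $\Gamma(f)\in\V$, which is precisely what you are trying to establish, so the argument as written is circular. Savar\'e's actual proof in \cite{S-S} proceeds through a more careful choice of test functions and capacity-theoretic arguments to break this circularity.

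For part (2), you invoke a pointwise self-improved inequality $\Gamma_2(f) \geq K\Gamma(f) + \Gamma(\Gamma(f))/(4\Gamma(f))$ as if it were available, but in the non-smooth setting this \emph{is} the hard content of Savar\'e's self-improvement theorem---it does not follow from a naive replication of the smooth argument via chain rule, because $\Gamma_2$ is only defined in weak integrated form. Similarly for part (3), you correctly identify the continuity estimate as the crux but do not supply it; in \cite{G-N} this requires the full machinery of the $W^{2,2}$ space built on top of Savar\'e's results. Your outline is a fair summary of \emph{what} needs to be done, but the execution is the substance of the cited papers, not something that can be filled in routinely.
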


To introduce the measure-valued `Ricci tensor', we briefly recall the notion of measure-valued Laplacian ${\bf \Delta}$ (c.f. \cite{S-S, G-O}).
We say that  $f\in {\rm D}({\bf \Delta}) \subset \V$ if there exists a  signed Borel measure  $\mu=\mu_+-\mu_-\in {\rm Meas}(X)$  charging no capacity zero sets 
such that 
 \[
\int \overline{\varphi} \,\d\mu=-\int \Gamma(\varphi.
f)\,\d \mm\]
for any $\varphi\in \V$ with quasi-continuous representative
$\overline{\varphi}\in L^1(X, |\mu|)$.
If  $\mu$ is unique,  we denote it by ${\bf \Delta} f$. If ${\bf \Delta} f \ll \mm$,  we also denote its density by $\Delta f$ if there is no ambiguity.

\begin{proposition}[See  \cite{G-N}, \S 3 and   \cite{S-S}, Lemma 3.2]\label{prop:measurebochner}
Let $\ms$  be a  $\rcd$ space.  Then for any $f\in {\rm TestF}_\loc:= \big\{f\in {\rm D}({\bf \Delta})\cap L^\infty_\loc:   \Delta f\in \V_\loc, \, \Gamma(f)\in L^\infty_\loc\big\}$, it holds  $\Gamma(f) \in {\rm D}({\bf \Delta})$
 and  the following non-smooth Bochner inequality
\begin{equation*}
{\bf \Gamma}_2(f):=\frac 12 {\bf \Delta} \Gamma(f)-\Gamma(f, \Delta f)\,\mm \geq \Big{(} {K}\Gamma(f)+\|\H_f\|^2_{\rm HS}\Big{)}\,\mm.
\end{equation*}
Denote ${\rm TestV}_\loc:=\{\Sigma_ {i=1}^n a_i\nabla f_i:  n\in \N, a_i, f_i \in {\rm TestF}_\loc \}$.  There is a measure-valued symmetric bilinear map  $\bRic:   [{\rm TestV}_\loc] ^2 \mapsto   {\rm Meas}(X)$
satisfying the following properties
\begin{enumerate}
\item for any $f\in {\rm TestF}_\loc$,
\[
\bRic(\nabla f, \nabla f):=\underbrace{\frac 12  {\bf \Delta} \Gamma(f)-\Gamma(f, \Delta f)\,\mm}_{={\bf \Gamma}_2(f)}-\|\H_f\|^2_{\rm HS}\,\mm;
\]
\item  for any $f\in {\rm TestF}_\loc$, $$\bRic(\nabla f, \nabla f) \geq K\Gamma(f) \,\mm;$$
\item  for any $f,g, h\in {\rm TestF}_\loc$, $$\bRic(h\nabla f, \nabla g)=h \bRic(\nabla f, \nabla g).$$ 
\end{enumerate}
\end{proposition}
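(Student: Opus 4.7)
The plan is to pass from the \emph{pointwise} Bakry--Émery inequality of Proposition~\ref{def-bevar} to a \emph{measure-valued} Bochner inequality that additionally contains the Hilbert--Schmidt norm of the Hessian, and then to \emph{define} $\bRic$ as the difference between the left-hand side of the resulting identity and the Hessian term. I follow the blueprint of Savaré and Gigli.

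\textbf{Step 1 (self-improvement of $\Gamma_2$).} First I would apply the $\Gamma_2$-inequality of Proposition~\ref{def-bevar} not to $f$ itself but to $\phi(f)$ for a smooth concave $\phi$. Using the chain rules $\Gamma(\phi(f))=\phi'(f)^2\Gamma(f)$ and $\Delta\phi(f)=\phi'(f)\Delta f+\phi''(f)\Gamma(f)$ (valid on ${\rm TestF}_\loc$), and the symmetric expansion of $\Gamma(g,\Gamma(f,h))$ etc., the inequality $\Gamma_2(\phi(f);\varphi)\ge K\int\varphi\Gamma(\phi(f))\dm$ becomes a quadratic-in-$\phi''(f)$ inequality in the parameter $\phi''(f)/\phi'(f)^2$. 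Optimizing over this parameter (pointwise, which is licit since $\phi$ is arbitrary) produces the pointwise bound
\[
\Gamma_2(f;\varphi)-K\int\varphi\Gamma(f)\dm\ \geq\ \int\varphi\,\|\H_f\|_{\rm HS}^2\dm
\]
for every $f\in{\rm TestF}$ and every nonnegative test $\varphi$, where $\H_f$ is read off via the polarization formula~\eqref{eq:hessian}. This is Savaré's self-improvement; localizing via a cutoff argument extends it to ${\rm TestF}_\loc$.

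\textbf{Step 2 (measure-valued Laplacian of $\Gamma(f)$).} Knowing that $\Gamma(f)\in\V_\loc$ (Proposition~\ref{prop: self}(1)), the distributional Laplacian of $\tfrac12\Gamma(f)$ acts on $\varphi\in\Lip_{\rm bs}$ as $\varphi\mapsto\Gamma_2(f;\varphi)+\int\varphi\Gamma(f,\Delta f)\dm$, by definition of $\Gamma_2(f;\varphi)$ and the integration-by-parts characterization of $\Delta$. By Step 1, this linear functional, minus the absolutely continuous part $\int\varphi(K\Gamma(f)+\|\H_f\|_{\rm HS}^2)\dm$, is nonnegative on nonnegative tests. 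Hence by the Riesz--Markov representation (applied on a countable exhaustion of $X$ by compacts) it extends to a nonnegative Radon measure $\nu$, and
\[
{\bf \Gamma}_2(f)\ :=\ \tfrac12{\bf\Delta}\Gamma(f)-\Gamma(f,\Delta f)\dm\ =\ \nu+\bigl(K\Gamma(f)+\|\H_f\|_{\rm HS}^2\bigr)\dm,
\]
which gives both $\Gamma(f)\in{\rm D}({\bf\Delta})$ (since $\Delta f\in\V_\loc$ makes $\Gamma(f,\Delta f)\dm$ a locally finite signed measure) and the claimed Bochner inequality.

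\textbf{Step 3 (the Ricci tensor).} Set $\bRic(\nabla f,\nabla f):={\bf\Gamma}_2(f)-\|\H_f\|_{\rm HS}^2\dm$ for $f\in{\rm TestF}_\loc$, which by Step 2 is $\ge K\Gamma(f)\dm$; property~(2) is then immediate. Polarization defines $\bRic(\nabla f,\nabla g)$ on pairs, and the identity $\bRic(h\nabla f,\nabla g)=h\,\bRic(\nabla f,\nabla g)$ in~(3) is proved by expanding ${\bf\Gamma}_2(hf+g)$ etc., using the Leibniz rules for $\Delta$ and $\Gamma$ on test functions together with the $L^\infty$-bilinearity of $\H_\cdot(\cdot,\cdot)$ granted by Proposition~\ref{prop: self}(3); this guarantees that the definition on finite sums $v=\sum h_i\nabla f_i\in{\rm TestV}_\loc$ is independent of the chosen representation.

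\textbf{Main obstacle.} The genuinely delicate point is Step~1: the optimization over $\phi$ requires the Hessian expression~\eqref{eq:hessian} to be pointwise well defined, and the test-function calculus (so that $\Gamma(f,\Gamma(f,\varphi))$ etc.\ can legitimately be integrated by parts) must be robust enough on $\rcd$ spaces. This is exactly the content of Gigli's tangent-module construction and Savaré's density/approximation arguments for ${\rm TestF}$, on which the entire scheme relies.
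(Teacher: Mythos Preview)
Your proposal is essentially correct and follows the approach of the cited references. Note that the paper does not give its own proof of this proposition: it is stated with the attribution ``See \cite{G-N}, \S 3 and \cite{S-S}, Lemma 3.2'' and used as a black box, so your sketch of the Savar\'e--Gigli argument (self-improvement of $\Gamma_2$ via the chain rule, Riesz--Markov to upgrade to a measure-valued inequality, then definition of $\bRic$ by subtraction and polarization) is exactly what those references contain and what the paper invokes.

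One small caveat on Step~1: in Savar\'e's actual argument the self-improvement is carried out not with a single $\phi(f)$ but with vector-valued test functions $\Phi(f_1,\dots,f_n)$ for polynomial $\Phi$, which is what makes the full Hessian (and not just $\H_f(\nabla f,\nabla f)$) appear after optimization; your one-variable description would only recover the diagonal part. This does not affect the correctness of the overall plan, but if you flesh it out you will need the multivariable version to get $\|\H_f\|_{\rm HS}^2$ rather than merely $\Gamma(\Gamma(f))/(4\Gamma(f))$.
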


\subsection{Equality  in the 2-Bakry-\'Emery inequality}

In the  next  lemma, we study  the equality in the  2-Bakry-\'Emery inequality. The argument for  the proof  is standard,   we just need to pay attention to  the regularity issues  appearing in the non-smooth framework.

\begin{lemma}[Equality  in the 2-Bakry-\'Emery inequality]\label{lemma}
Let $\ms$ be a  $\rcd$ probability  space for some $K> 0$  and  let $u\in \V \cap {\rm D}(\Delta) $ be a non-constant function with $\Delta u\in \V$ and  $\int u\,\d \mm=0$.
Then the following statements are equivalent.
\begin{enumerate}
\item   $u\in {\rm TestF}_\loc$ and   ${\bf \Gamma}_2(u)=K\Gamma(u)\, \mm$;
\item      $\Gamma_2(u; \varphi)=K\int \varphi\Gamma(u)\,\d \mm$ for all non-negative   $\varphi\in L^\infty$ with $\Delta \varphi \in L^\infty$;
\item  $\int (\Delta u)^2\,\d \mm=K\int \Gamma(u)\,\d \mm$;
\item  $-\Delta u=K u$;
\item  $\int \Gamma(u)\dm=K\int u^2\dm$;
\item  $\Gamma(P_t u)= e^{-2Kt} P_t {\Gamma(u)}$ for some $t>0$.
\end{enumerate}
In particular, $P_s u$ satisfies the properties above for all $s>0$. Furthermore, $P_{s} u$ satisfies one of these properties for all $s\in [0, t]$ if and only if
\[
\int (P_{t} u)^2 \dm=e^{-2Kt} \int u^2 \dm.
\]

If $u$ attains the equality in the 2-Bakry-\'Emery inequality (6)  above, it holds
\begin{itemize}
\item  [a)] $|\nabla P_t u|= e^{-Kt} P_t |\nabla u|$ for all  $t>0$;
\item   [b)] $u$ is a non-constant affine function, this means $\H_u=0$ and $\Gamma(u)$ is a positive  constant;
\item  [c)]  $u\in {\rm TestF}_\loc$ and $\bRic(u, u)=K\Gamma(u)\dm$;
\item  [d)] the gradient flow of $ u$ induces a one-parameter semigroup of isometries of $(X, \d)$.
\end{itemize}

\end{lemma}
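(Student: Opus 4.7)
The plan is to establish the equivalence of (1)--(6) in a single cycle and then read off the rigidity conclusions (a)--(d) from the equality case of Bochner's inequality. I would route the cycle as $(1)\Leftrightarrow(2)\Rightarrow(3)\Leftrightarrow(4)\Leftrightarrow(5)$, close it via $(4)\Rightarrow(1)$ through Bochner, and handle $(4)\Leftrightarrow(6)$ as a separate but short branch. The easy arrows are matters of unwinding definitions and integration by parts: $(1)\Leftrightarrow(2)$ because ${\bf \Gamma}_2(u)-K\Gamma(u)\,\mm$ is a non-negative Borel measure by Proposition \ref{prop:measurebochner} and vanishes iff it vanishes against a dense class of admissible non-negative $\varphi$; $(2)\Rightarrow(3)$ by testing with $\varphi\equiv 1$ (legal since $\mm$ is a probability measure and $\Delta 1=0$), turning $\Gamma_2(u;1)$ into $\int(\Delta u)^2\,\d\mm$; and $(4)\Rightarrow(3),(4)\Rightarrow(5)$ by direct integration.

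The core equivalence $(3)\Leftrightarrow(4)\Leftrightarrow(5)$ I would deduce from the $\rcd$ Poincar\'e inequality $\int\Gamma(u)\,\d\mm\geq K\int u^2\,\d\mm$ for zero-mean $u$. Expanding the non-negative square and substituting (3) produces
\[
0\leq \int(\Delta u+Ku)^2\,\d\mm=-K\Big(\int\Gamma(u)\,\d\mm-K\int u^2\,\d\mm\Big)\leq 0,
\]
forcing $\Delta u=-Ku$. The implication $(5)\Rightarrow(4)$ is the equality case of the Poincar\'e inequality: a first variation argument along zero-mean test perturbations shows that $u$ is critical for the Rayleigh quotient, hence a $K$-eigenfunction.

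To close the cycle I use $(4)\Rightarrow(1)$. Since $P_t u=e^{-Kt}u$, we may write $u=e^{Kt}P_t u$, inheriting from $P_t u$ the regularity needed to land in ${\rm TestF}_\loc$ and thereby access the refined Bochner inequality of Proposition \ref{prop:measurebochner}. Substituting $\Gamma(u,\Delta u)=-K\Gamma(u)$ collapses the Bochner bound to $\tfrac12{\bf \Delta}\Gamma(u)\geq \|\H_u\|^2_{\rm HS}\,\mm$; testing against the constant function $1$ (so that $\int d{\bf \Delta}\Gamma(u)=0$) gives $0\geq\int\|\H_u\|^2_{\rm HS}\,\d\mm\geq 0$, so $\H_u\equiv 0$ and ${\bf \Delta}\Gamma(u)=0$ as a measure. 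Pairing $\Gamma(u)\in \V$ with its own vanishing Laplacian then yields $\int\Gamma(\Gamma(u))\,\d\mm=0$, and $\rcd$ irreducibility forces $\Gamma(u)$ to be a positive constant. This single computation simultaneously delivers (1), (b) and (c). The branch $(4)\Leftrightarrow(6)$ is then quick: $(4)\Rightarrow(6)$ because $\Gamma(P_t u)=e^{-2Kt}\Gamma(u)=e^{-2Kt}P_t\Gamma(u)$ using constancy of $\Gamma(u)$, and the same constancy gives (a) since both sides collapse to $e^{-Kt}|\nabla u|$; conversely $(6)\Rightarrow(3)$ by differentiating $\psi(s):=\int\Gamma(P_s u)\,\d\mm$ and invoking the general spectral bound $\psi'(s)\leq -2K\psi(s)$, which is saturated on $[0,t]$ as soon as (6) is integrated against $\mm$. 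The final assertion about $\int(P_t u)^2\,\d\mm=e^{-2Kt}\int u^2\,\d\mm$ is the spectral statement that $u$ lies entirely in the $K$-eigenspace, which is exactly (4).

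The main obstacle is twofold. First, the regularity upgrade in the $(4)\Rightarrow(1)$ step: starting from only $u\in \V\cap {\rm D}(\Delta)$ with $\Delta u\in\V$, the Bochner identity carrying the Hessian term requires $u\in {\rm TestF}_\loc$, and the detour $u=e^{Kt}P_t u$ works only because $-\Delta u=Ku$ is already in hand; this forces the logical order to be $(3)\Rightarrow(4)\Rightarrow(1)$ rather than a direct $(3)\Rightarrow(1)$. Second, and more substantively, the rigidity conclusion (d) --- that the gradient flow of $u$ generates a one-parameter semigroup of isometries of $(X,\d)$ --- goes beyond the algebraic identities $\H_u=0$ and $|\nabla u|\equiv$ const.\ and amounts to a metric Killing-field statement in the non-smooth setting; I would deduce it from the structural theory of gradient fields and their flows on $\rcd$ spaces developed by Gigli and collaborators, applied to the affine function $u$.
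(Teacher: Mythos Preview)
Your proposal is correct and shares the paper's core mechanism---the Poincar\'e/eigenfunction equivalence, and the saturation of the Bochner inequality forcing $\H_u=0$ and $\Gamma(u)$ constant---but you organize the cycle differently and substitute two sub-arguments. First, the paper proves $(3)\Rightarrow(4)$ by a first-variation argument (perturbing $u\mapsto u\pm\epsilon g$ in the inequality $\int(\Delta f)^2\ge K\int\Gamma(f)$), whereas your expansion of $\int(\Delta u+Ku)^2$ combined with Poincar\'e is shorter and more transparent. Second, to return from (6), the paper goes $(6)\Rightarrow(2)$ by differentiating the interpolation $s\mapsto\int e^{-2Ks}P_s\varphi\,\Gamma(P_{t-s}u)\,\d\mm$, which yields the weak $\Gamma_2$-equality pointwise in $\varphi$; you instead run the scalar differential inequality $\psi'(s)\le-2K\psi(s)$ for $\psi(s)=\int\Gamma(P_s u)\,\d\mm$, landing at (3). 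Both work; the paper's route gives (2) directly without another pass through (4), while yours is lighter. Third, your logical order makes (4) the hub: you establish (b),(c) first and then read off (1), (6), and (a) from the constancy of $\Gamma(u)$; the paper instead proves $(4)\Rightarrow(6)$ directly (via $\phi(t)=\int\big(\Gamma(P_t u)-e^{-2Kt}P_t\Gamma(u)\big)\,\d\mm$ without using constancy), deferring (b),(c) to a separate ``Part 3'' and obtaining (a) by the sandwich $|\nabla P_t u|\le e^{-Kt}P_t|\nabla u|\le e^{-Kt}\sqrt{P_t\Gamma(u)}$.

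One small imprecision: the implication $(2)\Rightarrow(1)$ is not immediate from the measure-theoretic argument you sketch, because (2) does not by itself place $u$ in ${\rm TestF}_\loc$; you correctly close this via $(4)\Rightarrow(1)$ using the Lipschitz regularization $u=e^{Kt}P_t u$, so your cycle is complete despite the loose phrasing. The paper handles the regularity for (1) in the same spirit, deducing $u\in{\rm TestF}_\loc$ as a consequence of (b). For (d), both you and the paper outsource to the $\rcd$ structural result that a vector field with vanishing symmetric covariant derivative generates isometries (the paper cites \cite{Han-CVPDE18}).
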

\begin{proof}

{\bf Part 1}:
We will prove  (1) $\Longrightarrow$ (2) $\Longrightarrow$ (3) $\Longrightarrow$  (4) $\Longrightarrow$  (5) $\Longrightarrow$ (4)  $\Longrightarrow$ (6)  $\Longrightarrow$ (2).  Statement (1) is a consequence of b) and c) which will be proved in {\bf Part 2}.
 
(1) $\Longrightarrow$ (2):  Integrating  $\varphi$ w.r.t. the measures ${\bf \Gamma}_2(u)$, $K\Gamma(u)\, \mm$ we get the answer.

(2) $\Longrightarrow$ (3): Notice that  the constant function $\varphi \equiv 1$ is admissible, and  $\Gamma_2(u, 1)= \int (\Delta u)^2\,\d \mm$.

 (3) $\Longrightarrow$ (4): Applying  Proposition \ref{def-bevar}  with $\varphi \equiv 1$ (or by Proposition \ref{prop: self}, (2)),  we can see that  $$\int (\Delta f)^2\,\d \mm\geq K\int \Gamma(f)\,\d \mm$$ for  $f \in {\rm D}(\Delta)$. Let $f=u\pm\epsilon g$ for some $g\in {\rm D}(\Delta)$ and $\epsilon \in \R$. We obtain 
\begin{equation}\label{eq1:lemma}
\int \big(\Delta (u \pm \epsilon g ) \big )^2\,\d \mm\geq K\int \Gamma(u\pm \epsilon g)\,\d \mm.
\end{equation}

Differentiating \eqref{eq1:lemma} (w.r.t.  the variable $\epsilon$), and combining with the equality in (3) we get 
\[
\pm \int \Delta u \Delta g\,\d \mm\geq \pm K\int \Gamma(u,  g)\,\d \mm.
\]
Therefore
\begin{equation}\label{eq2:lemma}
\int \Delta u \Delta g\,\d \mm= K\int \Gamma(u,  g)\,\d \mm=-K\int u \Delta g\, \d\mm.
\end{equation}
Notice that  ${\rm D}(\Delta)$  is dense in $\V$, and by Poincar\'e inequality it holds  $$\overline{\Delta \big( {\rm D}(\Delta) \big)}^{L^2}\bigoplus \Big\{u\equiv c: c\in \R,  c\neq 0 \Big \}=L^2.$$ Hence    \eqref{eq2:lemma} yields (4).

(4) $\Longrightarrow$ (5)  Multiplying $u$ on both sides of $-\Delta u=K u$ and integrating w.r.t. $\mm$, we obtain  the equality in the  Poincar\'e inequality.

(5) $\Longrightarrow$ (4):  By  Poincar\'e inequality, we have $\int \Gamma(u+g)\dm\geq K\int (u+g)^2\dm$ for all $g\in \V$ with $\int g\dm=0$.  Then similar to (3) $\Longrightarrow$ (4),    we can prove the spectral gap equality by   a standard variation argument.

(4) $\Longrightarrow$ (6):  Denote  $\phi(t):=   \int \Big (\Gamma(P_t u)-e^{-2Kt} P_t \Gamma(u) \Big)\dm$.  By (4) we have $-\Delta P_t u=K P_t u$ for any $t \geq 0$, so $\int (\Delta P_t u)^2\,\d \mm=K\int \Gamma(P_t u)\,\d \mm$. It is known that (c.f. \cite[Lemma 2.1]{AGS-B}) $\phi \in C^1$, and
\begin{eqnarray*}
\phi'(t)&=&2 \int \Big(-\big (\Delta P_t u \big)^2+Ke^{-2Kt}  \Gamma(u) \Big)\dm\\
&=& 2 \int \Big(-K \Gamma(P_t u) +Ke^{-2Kt} \Gamma(u) \Big)\dm\\
&\geq&  0.
\end{eqnarray*}
Therefore $\phi(t)\geq \phi(0)=0$. Note  that by  2-Barky-\'Emery inequality $\Gamma(P_t u) \leq e^{-2Kt} P_t \Gamma(u)$, it holds $\phi\leq 0$. So  $\phi\equiv0$ and $\Gamma(P_t u) = e^{-2Kt} P_t \Gamma(u)$ for all $t>0$  which is the thesis.

(6) $\Longrightarrow$  (2):  It is known (c.f. \cite[Lemma 2.1]{AGS-B}) that $[0, t]  \ni s  \mapsto \Phi_{t, \varphi} (s) :=\frac12 \int  e^{-2Ks}P_s \varphi \Gamma(P_{t-s} u)\,\d \mm$ is $C^1$-continuous  for any  positive $\varphi \in L^\infty$ with $\Delta \varphi \in L^\infty$, and 
\[
\Phi_{t, \varphi} '(s)=e^{-2Ks} \Big(\Gamma_2(P_{t-s} u; P_s\varphi)-K \int P_s\varphi \Gamma(P_{t-s} u) \,\d \mm \Big) \geq 0.
\]
By 2-Bakry-\'Emery inequality,  (6) holds  if and only if $\Phi_{t, \varphi} '(s)=0$ for  any $s\in [0, t]$ and  any admissible function $\varphi$, i.e.
 \begin{equation}\label{eq3:lemma}
\Gamma_2(P_{t-s} u; P_s\varphi)=K \int P_s\varphi \Gamma(P_{t-s} u) \,\d \mm, ~~~~~\forall s\in [0, t].
 \end{equation}
 Notice that $u$ attains the equality in the  2-Bakry-\'Emery inequality  for $t>0$ if and only if it holds for all $t'\in [0, t]$, thus \eqref{eq3:lemma} implies
 \begin{equation}\label{eq4:lemma}
\Gamma_2 (P_{s} u; \varphi)=K \int \varphi \Gamma(P_{s} u) \,\d \mm, \qquad  \varphi, \Delta \varphi \in L^\infty,~0\leq s \leq t
 \end{equation}
which yields (2).

\bigskip

{\bf Part 2}:
Let $u_s=P_s u$. If $u$ satisfies one of the properties (1)-(6), from the discussion in the first part we know $\Delta u=-K u$. So $\Delta u_s=P_s \Delta u=-K u_s$, and $u_s$ also satisfies these properties.

Note that $\frac{\d}{\d s} u_s=\Delta u_s$.
By Poincar\'e inequality, we get
\begin{eqnarray*}
\frac{\d}{\d s} \frac12 \int  u_s^2 \dm &=& \int  u_s \frac{\d}{\d s}  u_s \dm\\
&=&\int u_s \Delta  u_s\dm\\
&=& -\int  \Gamma( u_s) \dm\\
&\leq& -K \int u_s^2 \dm.
\end{eqnarray*}
By Gr\"onwall's lemma, we obtain
\begin{equation}\label{lemma-2.31}
\int u_s^2 \dm \leq e^{-2Ks} \int u^2 \dm.
\end{equation}

Therefore, \eqref{lemma-2.31} is an equality  for some $t>0$ if and only if   $u_s $ attains the equality in the Poincar\'e inequality (5).

\bigskip
{\bf Part 3}:
 Furthermore,  by 1-Bakry-\'Emery inequality    and Cauchy-Schwarz inequality,  we have
 \[
 |\nabla P_t u|\leq  e^{-Kt} P_t |\nabla u| \leq  e^{-Kt} \sqrt {P_t\Gamma(u) }.
 \]
 So  if $u$ attains the equality in the 2-Bakry-\'Emery inequality  (6), it holds $ |\nabla P_t u|=  e^{-Kt} P_t |\nabla u| $.

 In addition,  integrating the non-smooth Bochner inequality in  Proposition \ref{prop:measurebochner} we obtain
 \[
 \int (\Delta u)^2\dm \geq K\int \Gamma(u)\dm+\int \| \H_u \|^2_{\rm HS}\dm.
 \]
 Thus the validity of (3) yields $\H_u=0$. In particular,  for any $v\in \V$, it holds $$\Gamma\big(\Gamma(u), v \big)=2 \H_u(\nabla u, \nabla v)=0,$$
 so $\Gamma(\Gamma(u))=0$ and
  $\Gamma(u)= |\nabla u|^2\equiv c$ for some constant $c\geq 0$. In particular,  $u\in {\rm TestF}_\loc$.   If $c=0$,  $f$ is  constant. 
If $c\neq 0$, by \cite[Theorem 1.2]{Han-CVPDE18} we know that the regular Lagrangian flow $(F_r)_{r\in \R^+}$  associated with $ \nabla u$   induces  a family of  isometries, i.e. $\d(F_r(x), F_r(y))=\d(x, y)$ for any $x, y \in X$ and $r>0$.
 
 Furthermore, by definition of $\bRic$ (c.f.  Proposition \ref{prop:measurebochner}) and  statement (2) proved in {\bf Part 1},  for any $\varphi \in L^\infty \cap {\rm D}(\Delta)$ with $\Delta \varphi \in L^\infty$ we have
 \[
 \Gamma_2(u; \varphi)=\int \varphi \| \H_u\|^2_{\rm HS}\dm+\int \varphi \,\d \bRic(u, u)=K\int \varphi \Gamma(u)\dm.
 \]   
Combining with $\H_u=0$ we obtain $${\bf \Gamma}_2(u)=\bRic(u, u)=K\Gamma(u)\,\mm$$  and we  complete the proof.

\end{proof}

The following proposition  plays a key role in studying  $\Phi$-entropy inequalities in \S \ref{sect:phi}.

\begin{proposition}\label{theorem:phi}
Let  $\ms$ be a  metric measure space satisfying  $\rcd$ condition for some $K>0$. Let $\Phi$ be a $C^2$-continuous   convex function on an interval $I\subset \R$ such that  $\frac{1}{\Phi''}$
is concave and strictly positive.  Then for all $t>0$, we have
\begin{equation}\label{eq0:lemma:phi}
{\Phi''(P_t u)}{ {\Gamma(P_t u )}}   \leq  e^{-2Kt} P_t \big({\Phi''}(u){ \Gamma(u)} \big)
\end{equation}
for any  $I$-valued function $u\in \V$.
In particular, the function $t\mapsto e^{2Kt} \int {\Phi''(P_t u)}{ {\Gamma(P_t u )}} \dm$ is non-increasing.

Furthermore,  the equality holds in \eqref{eq0:lemma:phi}  if and only if the following  properties are satisfied.

\begin{enumerate}
\item $(\Phi'')^{-1}$ is affine  on  the image of $u$ which is defined as $\supp u_\sharp \mm$ (by Lemma \ref{lemma:image} below we know $\supp u_\sharp \mm$ is a closed interval or a point).
\item    For any  $s\in [0, t]$, there is a constant $c=c(s)>0$ with $c(s)=e^{-2Ks}c(0)$,  such that  $$\sqrt{\Gamma(P_s u)}= e^{-Ks} P_s {\sqrt{\Gamma(u)}}~~\text{ and~}~~ \Gamma\big ( \Phi'(P_s u) \big)= c.$$
\end{enumerate}

\end{proposition}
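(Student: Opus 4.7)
The plan is to use the standard Bakry--\'Emery interpolation along the heat flow. For $s\in[0,t]$, set $v_s:=P_{t-s}u$ and introduce
\[
\Psi(s):=e^{-2Ks}\,P_s\!\bigl(\Phi''(v_s)\,\Gamma(v_s)\bigr),
\]
so that $\Psi(0)=\Phi''(P_t u)\,\Gamma(P_t u)$ and $\Psi(t)=e^{-2Kt}P_t\!\bigl(\Phi''(u)\,\Gamma(u)\bigr)$. The pointwise inequality \eqref{eq0:lemma:phi} is exactly $\Psi(0)\le\Psi(t)$, and the asserted monotonicity of $t\mapsto e^{2Kt}\!\int\Phi''(P_t u)\,\Gamma(P_t u)\dm$ follows by integrating $\Psi$ against $\mm$ and using self-adjointness of $P_s$. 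So the task is to show $\Psi$ is non-decreasing on $[0,t]$.

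After the truncation and mollification needed to legitimise the chain and Leibniz rules of Proposition~\ref{prop: self} in the non-smooth setting, direct differentiation gives
\[
e^{2Ks}\Psi'(s)=P_s\!\Bigl[\Phi''(v_s)\bigl(2\Gamma_2(v_s)-2K\Gamma(v_s)\bigr)+\Phi^{(4)}(v_s)\Gamma(v_s)^2+2\Phi'''(v_s)\,\Gamma\bigl(v_s,\Gamma(v_s)\bigr)\Bigr].
\]
I would then combine the measure-valued Bochner inequality of Proposition~\ref{prop:measurebochner}, the pointwise identity $\Gamma(v_s,\Gamma(v_s))=2\H_{v_s}(\nabla v_s,\nabla v_s)$, and the Cauchy--Schwarz bound $\bigl(\H_{v_s}(\nabla v_s,\nabla v_s)\bigr)^2\le\|\H_{v_s}\|_{\rm HS}^2\,\Gamma(v_s)^2$. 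Writing $A:=\|\H_{v_s}\|_{\rm HS}^2$, $B:=\H_{v_s}(\nabla v_s,\nabla v_s)$ and $G:=\Gamma(v_s)$, non-negativity of the integrand reduces to the pointwise estimate $2\Phi''(v_s)A+4\Phi'''(v_s)B+\Phi^{(4)}(v_s)G^2\ge 0$ under $AG^2\ge B^2$. Substituting the extremal case $A=B^2/G^2$ turns this into a quadratic in $B$ with discriminant proportional to $2(\Phi''')^2-\Phi''\Phi^{(4)}$, which is non-positive precisely because $1/\Phi''$ is concave; hence $\Psi'\ge 0$.

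For the equality case, assume \eqref{eq0:lemma:phi} is sharp. Then $\Psi'\equiv 0$ on $[0,t]$, forcing simultaneous equality at each $s$ in: (i) the Bochner inequality, yielding $\bRic(v_s,v_s)=K\Gamma(v_s)\,\mm$; (ii) Cauchy--Schwarz, so that $\H_{v_s}=(B/G^2)\,\nabla v_s\otimes\nabla v_s$ is rank one along $\nabla v_s$; and (iii) the quadratic attaining its minimum with vanishing discriminant, giving $B=-\Phi'''(v_s)G^2/\Phi''(v_s)$ and $(1/\Phi'')''\equiv 0$ on the image of $v_s$. Identifying this image with $\supp(u_\sharp\mm)$ via the maximum principle and Lemma~\ref{lemma:image}, condition (iii) delivers statement (1).

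Combining (ii) and (iii) with the chain rule $\H_{\Phi'(v_s)}=\Phi''(v_s)\H_{v_s}+\Phi'''(v_s)\,\nabla v_s\otimes\nabla v_s$ yields $\H_{\Phi'(v_s)}\equiv 0$, so $\Phi'(v_s)$ is affine and $\Gamma(\Phi'(v_s))$ equals a spatial constant $c(s)$. The $L^\infty$-bilinearity of $\bRic$ from Proposition~\ref{prop:measurebochner} together with (i) give $\bRic(\Phi'(v_s),\Phi'(v_s))=\Phi''(v_s)^2\bRic(v_s,v_s)=K\Gamma(\Phi'(v_s))\,\mm$, which combined with $\H_{\Phi'(v_s)}\equiv 0$ places $\Phi'(v_s)$ in the equivalent conditions of Lemma~\ref{lemma}. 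By that lemma $\Phi'(v_s)$ attains the 1-Bakry--\'Emery equality and, modulo an additive constant, satisfies $-\Delta\Phi'(v_s)=K\Phi'(v_s)$; the affine identity $1/\Phi''(x)=\alpha x+\beta$ on the image of $u$ furnished by (iii) then lets me rewrite $\nabla v_s=(\alpha v_s+\beta)\nabla\Phi'(v_s)$, which converts the rigidity just obtained for $\Phi'(v_s)$ into the desired 1-Bakry--\'Emery identity for $P_s u$ and the scaling $c(s)=e^{-2Ks}c(0)$. The main obstacle will be this last step: transferring the rigidity from $\Phi'(v_s)$ to $v_s$ itself and pinning down the precise time dependence of $c(s)$, despite the non-commutativity of $P_s$ with the nonlinear map $\Phi'$.
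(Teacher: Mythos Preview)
Your approach is a genuinely different route from the paper's. The paper does \emph{not} differentiate an interpolant and invoke Bochner; instead it chains three elementary inequalities:
\[
\Phi''(P_t u)\,\Gamma(P_t u)
\;\overset{\text{1-BE}}{\le}\;
e^{-2Kt}\Phi''(P_t u)\bigl(P_t\sqrt{\Gamma(u)}\bigr)^{2}
\;\overset{\text{Jensen}}{\le}\;
\frac{e^{-2Kt}\bigl(P_t\sqrt{\Gamma(u)}\bigr)^{2}}{P_t\bigl(1/\Phi''(u)\bigr)}
\;\overset{\text{CS}}{\le}\;
e^{-2Kt}P_t\bigl(\Phi''(u)\Gamma(u)\bigr),
\]
using the 1-Bakry--\'Emery inequality, Jensen's inequality for the concave function $1/\Phi''$, and Cauchy--Schwarz (in the form $(P_t g)^2\le P_t(g^2 h)\,P_t(1/h)$ with $g=\sqrt{\Gamma(u)}$, $h=\Phi''(u)$). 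This uses only the stated $C^2$ hypothesis, whereas your computation of $\Psi'$ requires $\Phi\in C^4$; you would need an approximation argument to match the hypotheses, and that approximation is delicate in the equality analysis.

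The real payoff of the paper's factorisation is in the equality case: equality in \eqref{eq0:lemma:phi} forces equality in each of the three links, which \emph{immediately} gives (i) the 1-BE equality $\sqrt{\Gamma(P_s u)}=e^{-Ks}P_s\sqrt{\Gamma(u)}$, (ii) $(\Phi'')^{-1}$ affine on the image of $u$ from sharpness in Jensen, and (iii) $\Gamma(\Phi'(P_s u))=c(s)$ constant from sharpness in Cauchy--Schwarz. The scaling $c(s)=e^{-2Ks}c(0)$ then follows in one line by substituting (i)--(iii) back into the equalities. There is no transfer problem: the 1-BE equality for $u$ itself is one of the three pieces, not something to be recovered from rigidity of $\Phi'(v_s)$.

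Your route can be pushed through---once you know $1/\Phi''(x)=\alpha x+\beta$ and $\Gamma(\Phi'(v_s))=\tilde c(s)$, you get $|\nabla v_s|=\sqrt{\tilde c(s)}\,(\alpha v_s+\beta)$; then constancy of $\Psi(s)=e^{-2Ks}\tilde c(s)(\alpha P_t u+\beta)$ forces $\tilde c(s)=e^{2Ks}\tilde c(0)$, and the 1-BE identity for $u$ is a direct check from these formulae. So the ``main obstacle'' you flag is not fatal. But it is an artefact of the method: by passing through Bochner you obtain the rigidity for $\Phi'(v_s)$ first and must then unwind, whereas the paper's chain hands you the 1-BE equality for $u$ directly.
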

\begin{proof}
Denote $P_t u$ by $u_t$. We have the following 1-Bakry-\'Emery inequality, 
\begin{equation}\label{eq1:lemma:phi}
\sqrt{\Gamma(u_t)} \leq e^{-Kt} P_t {\sqrt{\Gamma(u)}},\qquad\forall~t\geq 0,~~\forall u\in \V.
\end{equation}

By concavity of  $\frac{1}{\Phi''}$ and Jensen's inequality,  we have
\begin{equation}\label{eq1.5:lemma:phi}
\Phi''(u_t) \leq \Big (P_t \big( 1/{\Phi''(u)}\big)\Big)^{-1}.
\end{equation}
Combining with  \eqref{eq1:lemma:phi} we  get the following inequality
\begin{equation}\label{eq2:lemma:phi}
\Phi''(u_t)\Gamma(u_t) \leq  e^{-2Kt} \Big (P_t {\sqrt{\Gamma(u)}} \Big)^2 \Big (P_t \big( 1/{\Phi''}(u)\big)\Big)^{-1}.
\end{equation}

By Cauchy-Schwarz inequality we know
\begin{equation}\label{eq3:lemma:phi}
\Big (P_t {\sqrt{\Gamma(u)}} \Big)^2  \leq \Big(P_t \big( {\Phi''}(u){ \Gamma(u)}\big) \Big) \Big (P_t \big( 1/{\Phi''}(u)\big)\Big).
\end{equation}

Combining  \eqref{eq2:lemma:phi} and \eqref{eq3:lemma:phi}, we obtain
\begin{equation}\label{eq4:lemma:phi}
\Phi''(u_t)\Gamma(u_t)  \leq  e^{-2Kt}  P_t \big( {\Phi''}(u){ \Gamma(u)}\big)
\end{equation}
which is \eqref{eq0:lemma:phi}. Integrating \eqref{eq4:lemma:phi} w.r.t. $\mm$,  we obtain 
\begin{equation}\label{eq4.1:lemma:phi}
e^{2Kt} \int {\Phi''(u_t)}{ {\Gamma(u_t )}} \dm  \leq  \int {\Phi''}(u){ \Gamma(u)} \dm.
\end{equation}
By semigroup property, we can see that $e^{2Kt} \int {\Phi''(u_t)}{ {\Gamma(u_t)}} \dm$ is non-increasing in $t$.
\bigskip

Therefore, the equality in \eqref{eq0:lemma:phi} holds for some $t_0$ if and only if
\begin{equation}\label{eq4.11:lemma:phi}
e^{2Kt_0} \int {\Phi''(u_{t_0})}{ {\Gamma(u_{t_0} )}} \dm  =  \int {\Phi''}(u){ \Gamma(u)} \dm.
\end{equation} Furthermore, it holds
\begin{equation}\label{eq4.12:lemma:phi}
e^{2Kt} \int {\Phi''(u_{t})}{ {\Gamma(u_{t} )}} \dm  =  \int {\Phi''}(u){ \Gamma(u)} \dm.
\end{equation}
 for any  $t\leq t_0$. Hence the equality in \eqref{eq0:lemma:phi} holds  for some $t_0>0$  if and only if  the equalities in \eqref{eq1:lemma:phi} \eqref{eq1.5:lemma:phi} and \eqref{eq3:lemma:phi} hold for all $0\leq t\leq t_0$. The equality in \eqref{eq1.5:lemma:phi} holds iff $(\Phi'')^{-1}$ is affine on the image of $u$, and the validity of the equality in \eqref{eq3:lemma:phi} if and only if

\begin{equation}\label{eq5:lemma:phi}
\Phi''(u_t)\Gamma\big ( u_t \big)= \frac c{\Phi''(u_t)}
\end{equation}
for some constant $c=c(t)>0$.     Moreover,   for any $t\leq t_0$ we have
\[
\sqrt{c(t)}\mathop{=}^{\eqref{eq5:lemma:phi}}\Phi''(u_t)\sqrt{\Gamma\big ( u_t \big)}\mathop{=}^{\eqref{eq1:lemma:phi} \eqref{eq1.5:lemma:phi} }  e^{-Kt} \frac{ P_t \big(  \sqrt{\Gamma( u)} \big)}{P_t \big(1/\Phi''(u) \big)} \mathop{=}^{\eqref{eq5:lemma:phi}} e^{-Kt}  \sqrt{c(0)}
\]
which is the thesis.

\end{proof}

\begin{lemma}\label{lemma:image} 
Let $\ms$ be an $\rcd$ metric measure space and $u\in \V$. Then  the image of $u$,  defined as $\supp u_\sharp \mm$,  is a closed interval in $\R$ or a point in which case $u$ is constant.
\end{lemma}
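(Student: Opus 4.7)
The plan is to show that $A:=\supp u_\sharp\mm\subset\R$ is both closed (automatic, by definition of support) and connected. Once this is done, $A$ is either a single point---in which case $u_\sharp\mm$ is (a multiple of) a Dirac mass, so $u$ is $\mm$-a.e.\ constant---or a (possibly unbounded) non-degenerate closed interval, which is exactly the other alternative in the statement.

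To prove connectedness I argue by contradiction. If $A$ is disconnected, its closedness gives $\alpha<\beta$ with $\alpha,\beta\in A$ and $(\alpha,\beta)\cap A=\emptyset$, so that
\[
\mm(u^{-1}(\alpha,\beta))=0,\qquad \mm(\{u<\alpha\})>0,\qquad \mm(\{u>\beta\})>0.
\]
I then fix a smooth non-decreasing $\psi\colon\R\to[0,1]$ with $\psi\equiv 0$ on $(-\infty,\alpha]$, $\psi\equiv 1$ on $[\beta,+\infty)$, and set $v:=\psi\circ u$. The Lipschitz chain rule for minimal weak upper gradients (cf.\ \cite{AGS-C}) yields $v\in\V$ with
\[
|\nabla v|=|\psi'(u)|\,|\nabla u|\qquad\mm\text{-a.e.}
\]
Since $\psi'$ is supported in $[\alpha,\beta]$ and vanishes at the endpoints (by smoothness of $\psi$), and since $u^{-1}(\alpha,\beta)$ is $\mm$-null, the right-hand side is $0$ $\mm$-a.e., so $\E(v)=0$.

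The decisive step is then to invoke the irreducibility of the Dirichlet form $\E$ on $\rcd$ spaces---recalled in the introduction as a consequence of the Bakry-\'Emery gradient estimate---which forces any $v\in\V$ with $|\nabla v|=0$ $\mm$-a.e.\ to be $\mm$-a.e.\ constant. But $v=0$ on $\{u<\alpha\}$ and $v=1$ on $\{u>\beta\}$, two sets of positive $\mm$-measure, yielding the contradiction. I foresee no genuine obstacle; the only point requiring a touch of care is ensuring that $|\psi'(u)|=0$ $\mm$-a.e.\ on the possibly non-null set $\{u\in\{\alpha,\beta\}\}$, which motivates choosing $\psi$ smooth with $\psi'(\alpha)=\psi'(\beta)=0$ rather than merely piecewise linear.
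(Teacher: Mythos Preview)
Your proof is correct and takes a genuinely different route from the paper's. The paper argues directly via the optimal-transport structure: for any $c$ strictly between $a={\rm ess}\inf u$ and $b={\rm ess}\sup u$, it builds a Wasserstein geodesic between probability measures concentrated where $u$ is near $a$ and near $b$, invokes the test-plan characterization of Sobolev functions (\cite[\S 5]{AGS-C}) to obtain $u\circ\gamma\in W^{1,2}([0,1])$ for $\Pi$-a.e.\ curve $\gamma$, and then uses the intermediate value theorem along these curves together with $\mu_t\ll\mm$ to show that $u_\sharp\mm$ charges every neighborhood of $c$. Your argument is purely functional-analytic: you reduce connectedness of $\supp u_\sharp\mm$ to the irreducibility of the Dirichlet form via a chain-rule cutoff. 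This is shorter and avoids the machinery of test plans and Wasserstein geodesics entirely, at the price of importing irreducibility as a black box (which the paper does record, and which holds for every $K\in\R$, not only $K>0$).

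One small imprecision worth fixing: from $\alpha\in A$ and $(\alpha,\beta)\cap A=\emptyset$ you can only conclude $\mm(\{u\le\alpha\})>0$, not $\mm(\{u<\alpha\})>0$ (think of the case $\alpha={\rm ess}\inf u$ with $u_\sharp\mm(\{\alpha\})>0$). This is harmless for your argument, since $\psi\equiv 0$ on $(-\infty,\alpha]$ gives $v=0$ on $\{u\le\alpha\}$, and the contradiction with irreducibility goes through unchanged; just replace the strict inequalities by $\le\alpha$ and $\ge\beta$ in that line.
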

\begin{proof}
Denote ${\rm ess} \sup u=b\in \R \cup \{+\infty\}$ and ${\rm ess} \inf u=a\in \R \cup \{-\infty\}$. We will show that $\supp u_\sharp \mm=[a, b]$.  

If $a=b$, $u$ is constant, the assertion is obvious. Otherwise, $a<b$.
For any $c\in (a, b)$ and $\epsilon>0$ small enough such that $(c-\epsilon, c+\epsilon) \subset (a+\epsilon, b-\epsilon)$. Pick bounded measurable sets $A, B \subset X$ with  positive $\mm$-volume  such that $A \subset u^{-1}\big((a, a+\epsilon) \big)$ and 
$B\subset u^{-1}\big((b-\epsilon, b) \big)$.   By \cite{GRS-O} there is  a unique  $L^2$-Wasserstein geodesic  $(\mu_t)$ from $\mu_0:=\frac{\chi_A}{\mm(A)}\mm$ to $\mu_1:=\frac{\chi_B}{\mm(B)}\mm$. 
 There is  $\Pi \in {\mathcal P}_2(\geo(X, \d))$ such that $(e_t)_\sharp \Pi=\mu_t$ (c.f. \cite[Theorem 2.10]{AG-U}).  By  \cite[Lemma 3.1]{Rajala12}  we know $\frac{\d \mu_t}{\d \mm}$ is uniformly bounded, so $\Pi$ is a test plan (in the sense of  \cite[Definition 5.1]{AGS-C}).  By an equivalent characterization of Sobolev functions using test plans (c.f. \cite[\S 5, Proposition 5.7 and \S 6]{AGS-C}), we know
$u\circ \gamma \in W^{1,2}([0, 1])$  for $\Pi$-a.e. $\gamma$. Hence  for $\Pi$-a.e. $\gamma$, the map $t \mapsto u\circ \gamma(t)$ has an absolutely continuous representative.   So there is a set  $I_\gamma \subset [0, 1]$  with positive $\mathcal L^1$-measure  such that $u\circ \gamma(I_\gamma) \subset (c-\epsilon, c+\epsilon)$. By Fubini's theorem again, there is $t_c \in (0,1)$ and  $\Gamma_c \subset \supp \Pi$ with positive measure,  such that $u\circ \gamma(t_c) \in (c-\epsilon, c+\epsilon)$ for all $\gamma \in \Gamma_c$. Therefore $$\mu_{t_c}\Big (\big \{\gamma(t_c): \gamma\in \Gamma_c\big \} \Big)=(e_{t_c})_\sharp \Pi\restr{\Gamma_c}(X)>0.$$

From the definition of $\cd$ condition (see \eqref{eq1.5-intro}) we know $\mu_{t_c} \ll \mm$, so 
$$u_\sharp \mm\big((c-\epsilon, c+\epsilon)\big)=\mm\Big(u^{-1}\big((c-\epsilon, c+\epsilon)\big) \Big)\geq \mm \Big (\big \{\gamma(t_c): \gamma\in \Gamma_c \big \} \Big)>0.$$
Hence $c\in  \supp u_\sharp \mm$. Since  the choice of $c$ is arbitrary and $\supp u_\sharp \mm$ is closed, we know $\supp u_\sharp \mm=[a, b]$. 
\end{proof}

\begin{corollary}\label{coro:phi}
Under the same assumption as Proposition \ref{theorem:phi}, if there exists a non-constant  $u\in \V$ attaining the equality in \eqref{eq0:lemma:phi}   for all $t>0$, then up to additive and multiplicative constants,  and  affine  coordinate transforms,  $\Phi(x)=x\ln x$ or $\Phi(x)=x^2$.
 In any of these cases,  the function $P_t u$ attains the equality in the 1-Bakry-\'Emery inequality and the function $ \Phi'(P_t u)$ attains  the equality in the Poincar\'e inequality.  In particular,   $ \Phi'(P_t u) -\int  \Phi'(P_t u)\dm$ satisfies the properties (1)-(6)  in Lemma \ref{lemma} for all $t>0$.
\end{corollary}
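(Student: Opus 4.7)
The plan is to combine the equality characterization of Proposition \ref{theorem:phi} with the splitting theorem for the 1-Bakry-\'Emery inequality (Theorem \ref{th3-intro}), reducing the problem to a one-dimensional analysis on the Gaussian factor.

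First I would apply Proposition \ref{theorem:phi} to extract (a) that $(\Phi'')^{-1}$ is affine on the image $\supp u_\sharp\mm$ (which by Lemma \ref{lemma:image} is a genuine interval, since $u$ is non-constant), and (b) that for every $s>0$ both $\sqrt{\Gamma(P_s u)}=e^{-Ks}P_s\sqrt{\Gamma(u)}$ and $\Gamma(\Phi'(P_s u))\equiv c(s)$ is a strictly positive constant on $X$. Item (b) is already conclusion (2) of the corollary.

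Next, for any $s>0$ the regularised function $P_s u$ lies in ${\rm D}(\Delta)$ with $\Delta P_s u\in\V$ and still satisfies the 1-Bakry-\'Emery equality, so Theorem \ref{th3-intro}(13)--(14) produces the splitting
\[
\ms\cong(\R,|\cdot|,\phi_K\,\d t)\times(Y,\d_Y,\mm_Y)
\]
in which every $P_s u$ depends only on the first coordinate $r$. Passing to the $L^2$-limit $s\to 0$, the same holds for $u$; writing $u(r,y)=F(r)$, one has $P_t u(r,y)=G_t(r)$, where $G_t:=P_t^{\R} F$ is the Ornstein-Uhlenbeck evolution of $F$ on the Gaussian line. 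The $X$-wide constancy of $\Gamma(\Phi'(P_t u))$ becomes constancy of $|(\Phi'\circ G_t)'(r)|^2$ in $r$; since $G_t'\ge 0$ and $\Phi''>0$, integrating yields
\[
\Phi'(G_t(r))=\alpha(t)\,r+\beta(t),
\]
with $\alpha(t)\neq 0$, because strict convexity of $\Phi$ forces $\Phi'\circ P_t u$ to inherit the non-constancy of $P_t u$.

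The classification (3) then follows purely from (a): writing $(\Phi'')^{-1}(x)=\lambda x+\mu$ on the image, the case $\lambda=0$ forces $\Phi''$ to be a positive constant and gives $\Phi(x)=x^2$ modulo additive and multiplicative constants, while $\lambda\neq 0$ reduces under the affine substitution $y=\lambda x+\mu$ to $\Phi''(y)\propto 1/y$, hence $\Phi(x)=x\ln x$ up to affine coordinate transform and additive/multiplicative constants. For the Poincar\'e-equality statement, set $v_t:=\Phi'(P_t u)-\int\Phi'(P_t u)\,\d\mm$; the affine identity above combined with $\int r\,\phi_K(r)\,\d r=0$ yields $v_t(r,y)=\alpha(t)\,r$, which is an eigenfunction of the Ornstein-Uhlenbeck Laplacian satisfying $-\Delta v_t=K v_t$. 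Thus $v_t$ attains equality in the Poincar\'e inequality, and Lemma \ref{lemma} then delivers all of properties (1)--(6) for $v_t$.

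The most delicate step I anticipate is the derivation of the affine identity for $\Phi'\circ G_t$: it requires first engineering the splitting to collapse the problem to one dimension, then converting $X$-wide constancy of a gradient into a genuine one-dimensional ODE via the product structure of $\Gamma$, whose combination with the normalisations above pins down $\alpha(t)$. The classification (3), by contrast, is a self-contained consequence of hypothesis (a), and the remaining identifications follow routinely from Lemma \ref{lemma}.
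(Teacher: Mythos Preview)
Your proposal is correct but follows a genuinely different route from the paper's argument. Both proofs derive the classification of $\Phi$ in the same way, from the affinity of $(\Phi'')^{-1}$ furnished by Proposition~\ref{theorem:phi}. Where they diverge is in establishing that $\Phi'(P_t u)$ attains equality in the Poincar\'e inequality.

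The paper proceeds entirely within Section~2, without invoking the splitting theorem. It uses the pointwise identity $\Gamma(u_s)=c(s)/\bigl(\Phi''(u_s)\bigr)^2$ with $c(s)=e^{-2Ks}c(0)$ from Proposition~\ref{theorem:phi}, then computes both $\int (\Phi'(u_t))^2\,\d\mm - \bigl(\Phi'(\int u\,\d\mm)\bigr)^2$ and $\bigl(\int \Phi'(u_t)\,\d\mm\bigr)^2 - \bigl(\Phi'(\int u\,\d\mm)\bigr)^2$ by integrating $\frac{\d}{\d s}$ from $t$ to $+\infty$. The key algebraic observation is that $\eta:=\Phi^{(3)}/(\Phi'')^2=-(1/\Phi'')'$ is constant once $(\Phi'')^{-1}$ is affine; this makes the two time integrals differ by exactly $\int_t^\infty 2e^{-2K(s-t)}c(t)\,\d s=c(t)/K$, yielding the variance identity $\mathrm{Var}\bigl(\Phi'(u_t)\bigr)=\frac{1}{K}\int\Gamma(\Phi'(u_t))\,\d\mm$ directly.

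Your approach instead feeds the 1-Bakry--\'Emery equality into Theorem~\ref{th3-intro}(13)--(14), obtains the product decomposition, reduces to the Gaussian line, and then reads off $\Phi'(P_t u)=\alpha(t)r+\beta(t)$ from the constancy of $\Gamma(\Phi'(P_t u))$, recognising $r$ as the first Ornstein--Uhlenbeck eigenfunction. This is more geometric and arguably more transparent, but it relies on the full rigidity machinery of Section~3, which in the paper's logical order comes \emph{after} Corollary~\ref{coro:phi}. There is no circularity---Section~3 does not use this corollary---but your proof is a forward reference to substantially heavier results, whereas the paper's argument is a self-contained semigroup computation that stays at the level of Proposition~\ref{theorem:phi} and Lemma~\ref{lemma}.
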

\begin{proof}
By Proposition \ref{theorem:phi} and Lemma \ref{lemma:image} we know $(\Phi'')^{-1}$ is linear on an interval $I$.  So for  $x\in I$,   $\Phi''(x)=\frac1{c_1 x+c_2}$ for some constants $c_1, c_2$. 
 If $c_1=0$, $\Phi=x^2$ up to an additive constant and an  affine  coordinate transformation. If $c_1\neq 0$, up to an affine coordinate transform, $\Phi$ can be written as $x\ln x+c_3x+c_4$.  In the latter case, we can write $\Phi$  as $\Phi(x)=\frac1{e^{c_3}} \big((e^{c_3}x) \ln (e^{c_3}x )\big)+c_4$, which is the thesis.

Furthermore, by  Proposition \ref{theorem:phi} we know  $\Gamma (u_s)= c(s) /\big(\Phi''(u_s)\big)^2$ for all $s>0$, and $ c(s)=e^{-2Ks} c(0)$.  Thus for any $t>0$,  we have
\begin{eqnarray*}
&&\int \big(\Phi'(u_t)\big)^2 \dm- \Big(\Phi'\big(\int u \dm\big)\Big)^2 \\
&=& \int_{+\infty}^t \frac{\d }{\d s} \int \big(\Phi'(u_s)\big)^2 \dm\,\d s\\
\text{By \cite[Theorem 4.16]{AGS-C}}&=&\int^{+\infty}_t \int 2\Big(\big(\Phi''(u_s)\big)^2 +\Phi'(u_s)\Phi^{(3)}(u_s)\Big)\Gamma(u_s) \dm\, \d s  \\
&=& \int^{+\infty}_t  c(s) \int 2\Big(1 +\frac{\Phi'\Phi^{(3)}}{(\Phi'')^2}(u_s)\Big)\dm\,\d s \\
&=& \int^{+\infty}_t   2e^{-2K(s-t)} c(t) \int \Big(1 +\frac{\Phi'\Phi^{(3)}}{(\Phi'')^2}(u_s)\Big)\dm\,\d s.
\end{eqnarray*}

Similarly,
\begin{eqnarray*}
&&\Big(\int \Phi'(u_t) \dm\Big)^2- \Big(\Phi'\big(\int u \dm\big)\Big)^2 \\
&=& \int_{+\infty}^t \frac{\d }{\d s} \Big(\int \Phi'(u_s) \dm\Big)^2\,\d s\\
&=&\int^{+\infty}_t 2\Big (\int \Phi'(u_s) \dm\Big) \int \Phi^{(3)}(u_s)\Gamma(u_s) \dm\, \d s  \\
&=& \int^{+\infty}_t  2c(s)\Big (\int \Phi'(u_s) \dm\Big) \int \frac{\Phi^{(3)}}{(\Phi'')^2}(u_s)\dm\,\d s \\
&=& \int^{+\infty}_t   2e^{-2K(s-t)} c(t) \Big (\int \Phi'(u_s) \dm\Big) \int   \frac{\Phi^{(3)}}{(\Phi'')^2}(u_s)\dm\,\d s.
\end{eqnarray*}

Since $(\Phi'')^{-1}$ is linear,  we can see  that $\eta:=\frac{\Phi^{(3)}}{(\Phi'')^2}=-\Big (\frac{1}{\Phi''} \Big )' $ is  constant, so 
\begin{eqnarray*}
&&\int \big(\Phi'(u_t)\big)^2 \dm-\Big(\int \Phi'(u_t) \dm\Big)^2\\
&=& \int^{+\infty}_t   2e^{-2K(s-t)} c(t) \Big( \int \big(1 +\eta \Phi'(u_s) \big) \dm- \eta \int  \Phi'(u_s)\dm \Big )\,\d s\\
&=& \int^{+\infty}_t   2e^{-2K(s-t)} c(t)  \,\d s\\
&=& \frac1K c(t)\\
 &=& \frac1{K} \int \Gamma\big(\Phi'(u_t)\big) \dm.
\end{eqnarray*}
This means that  $\Phi'(u_t)$ attains  the equality in the  Poincar\'e inequality.

\end{proof}

\subsection{One-dimensional cases}
In this part, we will prove  the   rigidity of the 2-Bakry-\'Emery inequality in 1-dimensional cases.  This result is  a simple application of Lemma \ref{lemma},  and it will be used in  the study of higher-dimensional spaces.

\begin{proposition}\label{prop:1dimrigidity}
Let $h$ be a  $\cd$  probability density supported on a closed set   $I \subset \R$, this means,  $ h\mathcal L^1$ is a probability measure such that $(I, |\cdot|, h\mathcal L^1)$ is a $\cd$ space.  If there is a  non-constant function $f$ satisfying one of the properties (1)-(6) in Lemma \ref{lemma}, then $I=\R$ and  $h(t)=\phi_K(t)=\sqrt{\frac{K}{2\pi}} \exp(-\frac{Kt^2}2)$ up to a translation. Furthermore,  there is a constant $C=|f'|>0$ such that 
\[
P_t f(x)=C e^{Kt} x,\qquad\forall~~t\geq 0.
\]

\end{proposition}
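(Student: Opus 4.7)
The plan is to apply Lemma \ref{lemma} to identify the shape of $f$ in the one-dimensional setting, convert the spectral gap identity $-\Delta f=Kf$ into a first-order ODE for $h$, and then rule out that $I$ has a finite endpoint; together these force $I=\R$ and pin down $h=\phi_K$ up to translation.

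Applying Lemma \ref{lemma}, conclusions (b) and (4), $f$ is affine in the sense that $\H_f=0$ and $\Gamma(f)\equiv a^2$ for some $a>0$, and it satisfies $-\Delta f=Kf$. On the $1$-dimensional space $(I,|\cdot|,h\mathcal{L}^1)$ these conditions force $f(t)=a(t-t_0)$ for some $t_0\in\R$. I would then test $-\Delta f=Kf$ against $\varphi\in C_c^\infty(\mathrm{int}\,I)$ via the Dirichlet form identity
\begin{equation*}
\int_I f'\,\varphi'\,h\,\d t \;=\; -\int_I (\Delta f)\,\varphi\,h\,\d t \;=\; K\int_I f\,\varphi\,h\,\d t,
\end{equation*}
and, using $f'\equiv a$ together with integration by parts in the interior, obtain the distributional identity $(\log h)'(t)=-K(t-t_0)$ on $\mathrm{int}\,I$. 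Integrating yields
\begin{equation*}
h(t) \;=\; c\exp\!\Big(-\tfrac{K}{2}(t-t_0)^2\Big) \qquad \text{on }\mathrm{int}\,I
\end{equation*}
for some $c>0$.

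The main obstacle is excluding finite endpoints of $I$. If $t^{\ast}\in\partial I$ were finite, repeating the integration by parts above with a test function $\varphi\in\V$ that does not vanish at $t^{\ast}$ would produce an extra boundary term of the form $\pm\,a\,h(t^{\ast})\,\varphi(t^{\ast})$; since $f\in D(\Delta)$, this term must vanish for every admissible $\varphi$, forcing $h(t^{\ast})=0$. But the formula above gives $h(t^{\ast})>0$, a contradiction. Hence $I=\R$.

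Finally, with $I=\R$ the normalization $\int_\R h\,\d t=1$ fixes $c=\sqrt{K/(2\pi)}$ and, after translating so that $t_0=0$, one identifies $h=\phi_K$. The formula for $P_t f$ then follows from the eigenvalue equation: since $P_t=e^{t\Delta}$ commutes with $\Delta$ and $-\Delta f=Kf$, we get $P_t f(x)=e^{-Kt}f(x)=|f'|\,e^{-Kt}x$, which is the claimed formula with $C=|f'|>0$ (the exponential factor being the $e^{-Kt}$ dictated by the decay in Lemma \ref{lemma}(6)).
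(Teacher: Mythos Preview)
Your argument is essentially correct and follows the same strategy as the paper's proof, just with the two main steps in reversed order. The paper first excludes finite endpoints of $I$ via the Neumann boundary condition (from $f\in D(\Delta)$ one gets $f'h=0$ at any finite endpoint; since $f'\equiv a\neq 0$ and $h$ is a $\cd$ density, this is impossible), and only afterwards identifies $h=\phi_K$ by using the equality case in the integrated Bochner inequality $\int(\Delta_h f)^2\,h\,\d\mathcal L^1\geq K\int(f')^2\,h\,\d\mathcal L^1$. You instead first solve for $h$ on the interior directly from the eigenvalue equation $-\Delta f=Kf$, obtaining $h(t)=c\,e^{-K(t-t_0)^2/2}$, and then rule out a finite endpoint $t^\ast$ by observing that the boundary term $a\,h(t^\ast)\,\varphi(t^\ast)$ would have to vanish while $h(t^\ast)>0$ from the explicit formula. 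Both routes are valid; yours has the mild advantage of not needing to invoke the weak Bochner inequality separately, since the eigenvalue identity already pins down $h$.

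Two small points. First, for your integration by parts and the continuity of $h$ up to $\partial I$ you are implicitly using that a $\cd$ density on an interval has $-\ln h$ $K$-convex, hence $h$ is locally Lipschitz on $I$; the paper states and uses this explicitly. Second, you are right that the exponential factor should be $e^{-Kt}$: from $-\Delta f=Kf$ one has $P_tf=e^{-Kt}f$, consistent with the decay in the $2$-Bakry--\'Emery equality $\Gamma(P_tf)=e^{-2Kt}P_t\Gamma(f)$. The paper's computation contains a sign slip in the formula for $\Delta_h$ (it should read $\Delta_h f=f''+(\ln h)'f'$), which propagates to the stated $e^{Kt}$; your $P_tf(x)=|f'|\,e^{-Kt}x$ is the correct conclusion.
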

\begin{proof}
Since $h$ is a  $\cd$ density,  it is known (c.f.  \cite{AGS-G, Han-CDonRM}) that $-\ln h$ is $K$-convex and $\supp h$ is a closed interval $I:=[a, b]$ with $a\in \R \cup \{-\infty\}$ and $b\in \R\cup \{+\infty\}$.  In particular,  $h$ is  locally Lipschitz.   By Rademacher's  theorem,  $h'(x)$ exists for $\mathcal L^1$-a.e. $x\in I$. Furthermore, $(\ln h)'$ is a BV function and  $-(\ln h)'' \geq K$  in weak sense, i.e.
\begin{equation}\label{eq1:prop:1dim}
\int \varphi' (\ln h)'\, \d \mathcal L^1 \geq K\int \varphi \, \d \mathcal L^1
\end{equation} for all $\varphi \in C^1_c$ with $\varphi \geq 0$ and $\varphi'(a)=\varphi'(b)=0$.
\bigskip

Consider the $\Gamma_2$-calculus on the  metric measure space $(I, |\cdot|, h\mathcal L^1)$.  For $f\in {\rm D}(\Delta_I)$, by Proposition \ref{prop: self} and the fact that $\Gamma$ operator on  $(I, |\cdot|, h\mathcal L^1)$ coincides with the usual derivative, we know $f'\in W^{1,2}(I)$.  So it is absolutely continuous, and $f''(x)$  exists at almost every $x\in I$. By assumption and Lemma \ref{lemma}, we know $\H_f=0$.  By \eqref{eq:hessian}, we know  $\H_f= f''=0$ and $f'$ is  constant. By integration by part formula and Newton-Leibniz formula, for any $\varphi\in C^1_c$ we have
\begin{eqnarray*}
\int_I \varphi \Delta_I f\, h\d\mathcal L^1&=&-\int \varphi' f' \,h\d\mathcal L^1\\
&=& \int_I \varphi \big(f''-(\ln h)' f'\big)\, h\d\mathcal L^1+\varphi f' h\big( \delta_a-\delta_b\big).
\end{eqnarray*} By definition $\Delta_I f \in L^2$, so we have  $f'\restr{\{a, b\}\setminus\{\pm\infty\}}=0$, and 
\begin{equation}\label{1dim-1}
\Delta_h f=f''-(\ln h)' f'=-(\ln h)' f'.
\end{equation}
 Since $f$ is not  constant, there must be 
$\{a, b\}=\{\pm\infty\}$ and $I=\R$.

By \eqref{1dim-1} and \eqref{eq1:prop:1dim}, for any $\varphi \in C^1_c$, we have 
\begin{eqnarray*}
\int \big(\Delta_h f \big)^2\varphi h \,\d \mathcal L^1&=&\int \big((\ln h)' f'\big)^2 \varphi h\,\d \mathcal L^1\\ &=&(f')^2\int (\ln h)' \varphi h'\,\d \mathcal L^1\\ &\geq& K\int (f')^2 \varphi h\,\d \mathcal L^1-(f')^2\int (\ln h)' \varphi'h)\,\d.
\end{eqnarray*}
Letting $\varphi \to 1$ we get
\begin{equation}\label{eq2:prop:1dim}
\int \big(\Delta_h f \big)^2 h \,\d \mathcal L^1\geq K\int (f')^2 h\,\d \mathcal L^1.
\end{equation}
By assumption,  the equality holds in \eqref{eq2:prop:1dim}. Hence 
 there must be  $(\ln h)''=K$ in usual sense.  Up to a translation,  $h(x)= \sqrt{\frac{K}{2\pi}} \exp(-\frac{Kx^2}2)=\phi_K(x)$ for  $x \in \supp h=\R$. 

\bigskip
Furthermore,
by Lemma  \ref{lemma} we  have  $(P_t f)''=0$,  and $(P_t f)'$ is   constant  for any $t\geq 0$.   So there exist smooth  functions   $a=a(t), b=b(t) \in \R$ such that 
$$P_t f(x)=a(t)x+b(t).
$$

Notice  that  $\ddt P_t f =(P_t f)''-(\ln h)' (P_t f)'$,  we have
\[
\ddt a(t)x+\ddt b(t)=Kx a(t).
\]
Hence $a(t)=C e^{Kt}$ with $C=|f'|>0$, and $b\equiv 0$.

\end{proof}

\section{Rigidity of the 1-Bakry-\'Emery inequality}\label{sec:be}

\subsection{Equality in the 1-Barky-\'Emery inequality}\label{section:localization}

 In this part, we will prove  one of the most important results  in this paper, concerning  the equality in the 1-Bakry-\'Emery inequality. Several intermediate steps,   which corresponds  to  the results in  \cite[\S 2]{ABS19}  of Ambrosio-Bru\'e-Semola,   will be proved in    separate lemmas  before the main Theorem \ref{thm:localization}.    We remark that some arguments used   in  \cite{ABS19}  concerning  ${\rm RCD}(0, N)$ spaces are not  available now. For example,  there is no two-sides heat kernel estimate or uniform volume doubling property for general  $\rcd$  spaces.   Fortunately, we can  overcome these difficulties  by  making full use of the heat flow and the functional analysis tools developed by Gigli in  \cite{G-N}.
 
 \begin{lemma}\label{exp4}
 Let $\ms$ be an $\rcd$ probability space with $K \in \R$. Assume there exists a non-constant function $ f\in \V$ satisfying
$$|\nabla P_{t_0} f| =e^{-Kt_0}P_{t_0}|\nabla f| \quad\quad~~~\text{for some}~~ t_0>0.$$
For any  $s\in (0, t_0)$, denote $$A_s:= \Big\{|\nabla P_s f|=0 \Big\}.$$  Then it holds
$$\mm(A_s)=0.$$
In particular,
\[\mm\Big(\big\{ P_s f=c\big\} \Big)=0,\qquad \forall~c\in \R.
\]
\end{lemma}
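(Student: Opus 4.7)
The plan is to exploit the semigroup property of $(P_t)$ to propagate the equality hypothesis to every intermediate time, and then rule out $A_s$ by combining irreducibility of the heat flow with self-adjointness.

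First, for $s\in(0,t_0)$, I would chain two applications of the 1-Bakry-\'Emery inequality: once to $P_sf$ on the interval $[0,t_0-s]$ and once to $f$ on $[0,s]$, then push the second through $P_{t_0-s}$ using that the heat semigroup preserves inequalities. This gives
\[
\sqrt{\Gamma(P_{t_0}f)}\leq e^{-K(t_0-s)}P_{t_0-s}\sqrt{\Gamma(P_sf)}\leq e^{-Kt_0}P_{t_0}\sqrt{\Gamma(f)}.
\]
The hypothesis forces equality at the extremes, hence throughout. Applying conservativity of $(P_t)$, i.e. $\int P_{t_0-s}h\,\d\mm=\int h\,\d\mm$, to the non-negative function $h:=e^{-Ks}P_s\sqrt{\Gamma(f)}-\sqrt{\Gamma(P_sf)}$, one concludes $h\equiv 0$ $\mm$-a.e., so that the intermediate equality $\sqrt{\Gamma(P_sf)}=e^{-Ks}P_s\sqrt{\Gamma(f)}$ holds $\mm$-a.e.

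Second, on $A_s$ the left-hand side vanishes, so $P_s\sqrt{\Gamma(f)}=0$ $\mm$-a.e. on $A_s$. Suppose for contradiction $\mm(A_s)>0$. By self-adjointness of $P_s$,
\[
0=\int\nchi_{A_s}\,P_s\sqrt{\Gamma(f)}\,\d\mm=\int P_s\nchi_{A_s}\cdot\sqrt{\Gamma(f)}\,\d\mm,
\]
and since both factors are non-negative, their product vanishes $\mm$-a.e. Now $f$ being non-constant, the irreducibility of $\E$ (a consequence of the Bakry-\'Emery gradient estimate, as noted in the Introduction) gives $\mm(\{\sqrt{\Gamma(f)}>0\})>0$; and since $\nchi_{A_s}$ is a non-trivial non-negative function, the same irreducibility, transferred to the semigroup via the standard equivalence for strongly local symmetric Dirichlet forms, yields $P_s\nchi_{A_s}>0$ $\mm$-a.e. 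Together these contradict the vanishing of the product, forcing $\mm(A_s)=0$.

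Finally, for the level-set statement I would invoke the locality of the minimal weak upper gradient on $\rcd$ spaces: the chain rule applied to $(P_sf-c)^+$ and $(P_sf-c)^-$ gives $|\nabla P_sf|=0$ $\mm$-a.e. on $\{P_sf=c\}$, so this level set is contained in $A_s$ up to an $\mm$-null set, and the conclusion follows from the first part.

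The main obstacle is the strict-positivity step: converting the abstract irreducibility of $\E$ into the pointwise statement $P_s\nchi_{A_s}>0$ $\mm$-a.e. is what does the real work. In the setting of $\rcd$ spaces it ultimately rests either on the strong-Feller/heat-kernel positivity known for these spaces, or on the standard invariant-set argument for irreducible symmetric Dirichlet forms combined with conservativity; the rest of the proof is a fairly direct unwinding of the 1-Bakry-\'Emery inequality along the semigroup.
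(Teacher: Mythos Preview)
Your proof is correct and follows essentially the same route as the paper's: propagate the equality to intermediate times, use self-adjointness to write $\int P_s\nchi_{A_s}\cdot|\nabla f|\,\d\mm=0$, and derive a contradiction from strict positivity of $P_s\nchi_{A_s}$ together with $f$ being non-constant. The one substantive difference is in the justification of that strict positivity: the paper invokes Li's dimension-free Harnack inequality on $\rcd$ spaces directly to conclude $P_s\nchi_{A_s}>0$ everywhere, whereas you appeal to irreducibility of the semigroup. You correctly flag this as the crux; note that in the general $K\in\R$ case (where $\mm$ may be infinite and the conservativity argument for $h\equiv 0$ also needs care), the abstract irreducibility-to-positivity step is not entirely free and ultimately rests on the same Harnack-type input, so citing it explicitly is cleaner. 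Your derivation of the intermediate-time equality is actually more explicit here than the paper's, which states it without proof in this lemma and only spells out the sandwich argument in the subsequent lemma.
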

\begin{proof}
Assume by contradiction that   $\mm (A_s )>0$ for some  $s>0$.    Since $f$ is non-constant,  we know $\mm(A_s)\in (0, 1)$.

  Recall that $f$ attains the equality in the 1-Barky-\'Emery inequality,  we have 
\[
P_s|\nabla f|=e^{Ks} |\nabla P_s f|=0,\qquad\text{on}~ A_s.
\]
Thus
\[
0=\int_{A_s} P_s |\nabla f|\dm=\int P_s(\nchi_{A_s}) |\nabla f|\dm.
\]

Denote $A_0^c:= \Big\{|\nabla  f| >0 \Big\}$. We can see that 
\begin{equation}\label{1-exp3}
\int_{A_0^c}  P_{ s} ( \nchi_{A_s}) \dm=0,
\end{equation}
i.e.  $P_{ s} ( \nchi_{A_s})=0$ on $A_0^c$.
Note that  $P_s(\nchi_{A_s})$ is Lipschitz continuous, and by dimension-free Harnack inequality on $\rcd$ spaces proved by H.-Q. Li in \cite[Theorem 3.1]{Li-Harnack},  it holds
\[
\big ((P_s \nchi_{A_s} )(y) \big) ^2 \leq (P_s\nchi_{A_s} )(x)\exp{\Big\{\frac{K\d^2(x, y)}{e^{2Ks}-1}\Big\}}.
\]
So  $P_{ s} ( \nchi_{A_s})(x)>0$ at every point $x \in X$.  Thus  $\mm(A_0^c)=0$ and $\mm(A_0)=1$, which contradicts to the assumption that $f$ is non-constant.

Finally, by locality of the weak gradient (c.f. \cite[Proposition 5.16]{AGS-C}),  it holds $|\nabla P_s f|=0$ $\mm$-a.e. on $\{P_s f=c\} $. So $\mm\big (\{P_s f=c\} \big)\leq \mm(A_s)=0$.

\end{proof}

\begin{lemma}\label{lemma:1be}
Under the same assumption as Lemma \ref{exp4}.
Denote $b_s:=\frac{\nabla P_s f}{e^{-Ks}|\nabla P_s f|}$.  Then  for any $g\in \V$ and $s, t \in \R^+$ with $s+t<t_0$, it holds $$\la b_{t+s}, \nabla P_t g \ra=P_t \la b_s, \nabla g \ra.$$ 
\end{lemma}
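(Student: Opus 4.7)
The hypothesis of Lemma \ref{exp4}, combined with the 2-Bakry-\'Emery inequality and the semigroup property, forces the sharper equality $|\nabla P_r f| = e^{-Kr} P_r |\nabla f|$ for every $r \in [0,t_0]$. By Lemma \ref{exp4} the vector field $b_r$ is therefore well defined $\mm$-a.e. I plan to test the claimed identity against a test function $h\in{\rm TestF}$: by self-adjointness of $P_t$, it is equivalent to
\[
\int h\,\langle b_{s+t},\nabla P_t g\rangle\dm = \int (P_t h)\,\langle b_s,\nabla g\rangle\dm,
\]
and I will prove that the interpolating function
\[
\Psi(r) := \int (P_{t-r}h)\,\langle b_{s+r},\nabla P_r g\rangle\dm,\qquad r\in[0,t],
\]
satisfies $\Psi(0)=\Psi(t)$, by showing $\Psi'\equiv 0$.

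Set $u := P_{s+r} f$, $v := P_r g$, $w := |\nabla u|$, $p := \Gamma(u,v)$ and $\nu := \nabla u/w$. Using the equality $w = e^{-K(s+r)} P_{s+r}|\nabla f|$ one rewrites $\langle b_{s+r},\nabla P_r g\rangle = e^{K(s+r)}\Phi(r)$ with $\Phi(r) := p/w$, so that $\Psi(r) = e^{K(s+r)}\int (P_{t-r}h)\Phi\dm$. The heart of the proof is the pointwise PDE
\[
\partial_r\Phi = \Delta\Phi - K\Phi \qquad \mm\text{-a.e.}
\]
Granted this PDE, self-adjointness of $\Delta$ gives $\frac{d}{dr}\int (P_{t-r}h)\Phi\dm = -K\int (P_{t-r}h)\Phi\dm$, and the $-K\Phi$ drift is exactly cancelled by the $\frac{d}{dr}e^{K(s+r)}$ factor, so $\Psi'\equiv 0$. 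To prove the PDE I combine three pointwise identities forced by the equality in 1-Bakry-\'Emery: (i) $\partial_r w = \Delta w - Kw$, coming from simultaneous equality in Bochner ($\bRic(u,u)=K\Gamma(u)\mm$, Proposition \ref{prop:measurebochner}) and in Kato's inequality ($|\nabla w|^2 = \|\H_u\|_{\rm HS}^2$); (ii) the rank-one structure $\H_u = \lambda\,\nu\otimes\nu$ with $\lambda^2 = |\nabla w|^2$, again from Kato equality; and (iii) the polarized Ricci identity $\bRic(u,v)\mm = K\Gamma(u,v)\mm$ for \emph{every} $v\in\V$. Identity (iii) is the only genuinely new ingredient; it follows from a bilinear Cauchy-Schwarz argument on the non-negative bilinear form $\bRic - K\Gamma\mm$ (Proposition \ref{prop:measurebochner}) using its diagonal vanishing at $(u,u)$: testing on $u+tv$ and dividing by $t$ as $t\to 0$ kills the off-diagonal term.

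A direct Leibniz-rule expansion of $\partial_r\Phi-\Delta\Phi$, using $\partial_r p = \Delta p - 2\Gamma_2(u,v)$ (which follows from $\frac12\Delta\Gamma(u,v) = \Gamma_2(u,v)+\frac12\Gamma(u,\Delta v)+\frac12\Gamma(v,\Delta u)$) and (i), gives
\[
\partial_r\Phi-\Delta\Phi = \frac{2}{w}\left[-\Gamma_2(u,v) + \frac{Kp}{2} + \frac{\Gamma(p,w)}{w} - \frac{p|\nabla w|^2}{w^2}\right].
\]
Using (ii) and the Hessian formula \eqref{eq:hessian} to compute $\nabla\Gamma(u,v)$, one finds $\Gamma(p,w)/w - p|\nabla w|^2/w^2 = \lambda\,\H_v(\nu,\nu)$. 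Writing $\Gamma_2(u,v) = \bRic(u,v) + \langle\H_u,\H_v\rangle_{\rm HS} = \bRic(u,v) + \lambda\,\H_v(\nu,\nu)$ and invoking (iii) collapses the bracket to $-Kp/2$, yielding $\partial_r\Phi-\Delta\Phi = -Kp/w = -K\Phi$, as required.

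\textbf{Main obstacle.} The formal manipulation is straightforward once the pointwise versions of the equality identities are available, so the difficulty is rigorously extracting them in the non-smooth $\rcd$ setting. In particular, the pointwise Bochner--Kato identities are a priori statements on measures (Proposition \ref{prop:measurebochner}), and the polarization step (iii) needs the bilinear structure of $\bRic$ together with the Hessian calculus. Moreover $\Phi = p/w$ is not obviously Sobolev because $w$ could vanish; this is precisely handled by Lemma \ref{exp4}, together with an approximation $w\leadsto\sqrt{w^2+\eps}$ and the tangent-module calculus of Gigli \cite{G-N} in the limit $\eps\to 0$.
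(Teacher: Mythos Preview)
Your approach is genuinely different from the paper's, and considerably more elaborate. The paper's proof is a short variational argument: from the equality at time $t_0$ one first checks (by sandwiching with the 1-Bakry--\'Emery inequality) that $|\nabla P_{t+s}f|=e^{-Kt}P_t|\nabla P_sf|$ for all $s+t<t_0$; then one observes that the non-negative functional $h\mapsto\Psi(h):=\int\big(e^{-Kt}P_t|\nabla h|-|\nabla P_th|\big)\varphi\dm$ is minimized (with value $0$) at $h=P_sf$, so its first variation $\frac{\d}{\d\epsilon}\big|_{\epsilon=0}\Psi(P_sf+\epsilon g)=0$ yields exactly the claimed identity after differentiating $|\nabla\cdot|$ under the integral (Lemma~\ref{exp4} justifies the quotient). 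No second-order calculus is used.

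Your interpolation argument and the target PDE $\partial_r\Phi=\Delta\Phi-K\Phi$ are formally correct, but there are two gaps. First, you justify (i) via ``simultaneous equality in Bochner and in Kato'', yet you never explain why those equalities hold; as written this step is unsupported. The repair is to reverse the logic: (i) follows \emph{directly} from $w=e^{-K(s+r)}P_{s+r}|\nabla f|\in D(\Delta)$; then the identity ${\bf\Gamma}_2(u)=\big(K\Gamma(u)+|\nabla w|^2\big)\mm$ (chain rule for ${\bf\Delta}(w^2)$), combined with the Bochner and Kato inequalities, forces equality in both. Second, and more seriously in the ${\rm RCD}(K,\infty)$ setting, your identities (ii) and (iii) and the chain rule just mentioned require $u=P_{s+r}f\in{\rm TestF}_\loc$, hence $\Gamma(u)\in L^\infty_\loc$; but for $f\in\V$ one only has $|\nabla f|\in L^2$, and there is no $L^2\to L^\infty_\loc$ smoothing of the heat semigroup in this generality. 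The $\sqrt{w^2+\eps}$ trick handles division by $w$ but not this membership in the test class. The paper's first-variation proof sidesteps all such regularity issues by staying at first order.
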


\begin{proof}
 By 1-Bakry-\'Emery inequality and the assumption,  for any  $s, t, r \in (0, t_0)$ with $s+t+r=t_0$,  we can see that
\begin{eqnarray*}
0&\geq&
 e^{-Kr} P_r \Big(|\nabla P_{t+s} f| -e^{-Kt}P_t|\nabla P_s f|\Big)\\
&=& \Big(e^{-Kr} P_r  |\nabla P_{t+s} f| -\underbrace{e^{-K(t+s+r)}P_{t+s+r}|\nabla  f|}_{e^{-Kt_0}P_{t_0}|\nabla f|}\Big)\\&& + \Big(e^{-K(t+s+r)}P_{t+s+r}|\nabla  f| -e^{-K(t+r)}P_{t+r}|\nabla P_s f|\Big)\\
&=&\Big(e^{-Kr} P_r  |\nabla P_{t+s} f| -\underbrace{|\nabla P_{t+s+r} f|}_{|\nabla P_{t_0} f|}\Big) + \Big(e^{-K(t+s+r)}P_{t+s+r}|\nabla  f| -e^{-K(t+r)}P_{t+r}|\nabla P_s f|\Big)\\
&\geq& 0.
\end{eqnarray*}
 Thus 
 \begin{equation}\label{1be-1}
 |\nabla P_{t+s} f| =e^{-Kt}P_t|\nabla P_s f|
 \end{equation}
 for any $s, t\in \R^+$ with $s+t<t_0$ (c.f. \cite[Lemma 2.4, 2.7]{ABS19}). 

 Fix $t>0$ and consider the Euler equation associated with the functional 
\[
\Psi (h):= \int \big( e^{-Kt} P_t|\nabla h|-|\nabla P_t h| \big) \varphi\dm,\qquad h\in \V, \varphi \in \Lip_{bs}(X, \d).
\]

From Lemma \ref{exp4} we know $\frac{\nabla P_s f}{|\nabla P_s f|}$ is well-defined and  $\left |\frac{\nabla P_s f}{|\nabla P_s f|} \right |=1$ $\mm$-a.e.. 
Using a standard variation argument  (c.f.  \cite[proof of Proposition 2.6]{ABS19}),  for any $g\in \V$ and  $s>0$ with $s+t<t_0$,  we get
\begin{eqnarray*}
0&=& \frac {\d}{\d \epsilon} \restr{\epsilon=0} \Psi(P_s f+\epsilon g)\\
&=& \int \Big (e^{-Kt} P_t \big (\frac{\la \nabla P_s f,  \nabla g \ra}{|\nabla P_s f|} \big) -\frac{\la \nabla P_{t+s} f, \nabla P_t g\ra}{|\nabla P_{t+s} f|} \Big )\varphi \dm\\
&=& e^{-K(t+s)} \int \Big ( P_t \la b_s, \nabla g  \ra -\la b_{t+s}, \nabla P_t g \ra \Big) \varphi \dm.
\end{eqnarray*}
 Then the conclusion follows from the arbitrariness of $\varphi$.

\end{proof} 

\begin{lemma}\label{lemma:div0}
Let $\ms$ be an $\rcd$ probability  space.   Assume  there is a non-constant function  $f\in \V$ satisfying
$$|\nabla P_{t_0} f| =e^{-Kt_0}P_{t_0}|\nabla f| \qquad \text{for}~~ t_0>0, $$
and denote $b_s:=\frac{\nabla P_s f}{e^{-Ks}|\nabla P_s f|}$.

Then  $ b_s \in {\rm D}(\div)$ for any $s\in (0, t_0)$. Furthermore,   for any $s,t>0$ with $s+t<t_0$,
\begin{equation}\label{div-1}
P_t \div(b_{t+s})=\div( b_s).
\end{equation}
In particular, $\div( b_s) \in {\rm D} (\Delta)$ and $\Delta\div( b_s) \in \V$.
\end{lemma}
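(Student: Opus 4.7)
The plan is to exploit the transfer identity from Lemma~\ref{lemma:1be},
\[
\la b_{t+s},\nabla P_t g\ra = P_t\la b_s,\nabla g\ra,\qquad g\in\V,\ s,t>0,\ s+t<t_0,
\]
combined with conservativeness $\int P_t h\dm=\int h\dm$ on the probability space $\ms$. Integrating both sides against $\mm$ yields the basic identity
\[
\int \la b_s,\nabla g\ra\dm=\int\la b_{t+s},\nabla P_t g\ra\dm,\qquad\forall\,g\in\V,\ t\in(0,t_0-s),
\]
which transfers pairings involving the merely $L^\infty$-derivation $b_s$ into pairings involving the smoother object $P_t g$.

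The crucial observation is that this identity immediately makes $g\mapsto L(g):=\int\la b_s,\nabla g\ra\dm$ an $L^2$-bounded linear functional on $\Lip_{\rm bs}$. Indeed, since $|b_{t+s}|=1$ $\mm$-a.e., Cauchy--Schwarz on the probability space $(X,\mm)$ combined with the a priori estimate $\E(P_t g)\le \tfrac{1}{2t}\|g\|_{L^2}^2$ from Lemma~\ref{lemma:reg} gives
\[
|L(g)|\le \int|\nabla P_t g|\dm\le \|\nabla P_t g\|_{L^2}\le \frac{1}{\sqrt{2t}}\|g\|_{L^2}.
\]
By density of $\Lip_{\rm bs}$ in $L^2$ and the Riesz representation theorem, there exists a unique $h\in L^2$ with $L(g)=-\int h\,g\dm$ for every $g\in\Lip_{\rm bs}$; by definition this is exactly $b_s\in{\rm D}(\div)$ with $\div(b_s)=h$.

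Having established $b_s,b_{t+s}\in{\rm D}(\div)$ for every admissible pair $(s,t)$, I return to the basic identity and integrate by parts on both sides to derive \eqref{div-1}: for $g\in\V$,
\[
-\int\div(b_s)\,g\dm=\int\la b_{t+s},\nabla P_t g\ra\dm=-\int\div(b_{t+s})\,P_t g\dm=-\int P_t\div(b_{t+s})\cdot g\dm,
\]
where the last step uses self-adjointness of $P_t$ on $L^2$. Uniqueness of the divergence then forces $\div(b_s)=P_t\div(b_{t+s})$. The regularity claim is immediate from the smoothing property of the heat semigroup: since $\div(b_{t+s})\in L^2$, writing $P_t=P_{t/2}\circ P_{t/2}$ and iterating the bounds in Lemma~\ref{lemma:reg} yields $\div(b_s)\in{\rm D}(\Delta)$ with $\Delta\div(b_s)=P_{t/2}\Delta P_{t/2}\div(b_{t+s})\in\V$.

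The proof is essentially soft once Lemma~\ref{lemma:1be} is in hand; the only delicate point is verifying the correct regularity when integrating by parts in the key identity, which is handled by first taking $g$ in a dense subclass (e.g.\ $g\in{\rm D}(\Delta)\cap L^\infty$ with $\Delta g\in L^\infty$, so that $P_t g$ pairs validly against $\div(b_{t+s})$) and extending by continuity, using the $L^2$-bounds of all operators involved.
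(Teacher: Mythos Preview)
Your proof is correct and follows essentially the same route as the paper: both use the transfer identity of Lemma~\ref{lemma:1be}, integrate it over $\mm$ (using conservativeness), bound the resulting pairing via $|b_{t+s}|$ and the a priori estimate $\E(P_t g)\le \tfrac{1}{2t}\|g\|_{L^2}^2$ from Lemma~\ref{lemma:reg}, and then invoke Riesz representation. One small slip: by definition $b_s=\frac{\nabla P_s f}{e^{-Ks}|\nabla P_s f|}$, so $|b_{t+s}|=e^{K(t+s)}$ rather than $1$; this only changes the constant in your $L^2$-bound and does not affect the argument.
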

\begin{proof}
For any $g\in \V$, we have
\begin{eqnarray*}
\left |\int \la b_{s}, \nabla g \ra \dm\right| &=& \left | \int P_t \la b_s, \nabla g\ra \dm \right |\\
\text{By  Lemma \ref{lemma:1be} }&=&  \left | \int \la b_{t+s}, \nabla P_t g\ra \dm \right |\\
&\leq&  \int  |b_{t+s}||\nabla P_t g| \dm\\
\text{By $|b_r|=e^{Kr}$ and Cauchy-Schwartz inequality} &\leq & e^{(t+s)K} \sqrt{\E( P_t g)}.
\end{eqnarray*}
Note that   it holds a standard estimate (c.f. Lemma \ref{lemma:reg})  $\E(P_t g) \leq \frac 1{2t} \| g\|^2_{L^2}$. Hence by Riesz representation theorem,  $ b_s \in {\rm D}(\div)$.

At last, the identity \eqref{div-1} follows immediately from  Lemma \ref{lemma:1be}.
\end{proof}

\begin{proposition}\label{lemma:1be2}
Keep the same assumption and notations as in Lemma \ref{lemma:div0}.  It holds
\[
\int \big( \div(b_s)\big)^2 \dm =e^{2Ks}\int \big( \div(b_0)\big)^2 \dm
\]
for all $s\in [0,  t_0]$.
\end{proposition}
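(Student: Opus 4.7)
Set $v(s):=\div(b_s)$ and $\phi(s):=\int v(s)^2\dm$ for $s\in[0,t_0]$; the goal is $\phi(s)=e^{2Ks}\phi(0)$. The backbone of the argument is the semigroup identity $P_tv(t+s)=v(s)$ from Lemma~\ref{lemma:div0}, together with the regularity $v(s)\in{\rm D}(\Delta)$ and $\Delta v(s)\in\V$ furnished there. Differentiating this identity in $t$ at $t=0^+$ yields the (backward) evolution equation $\partial_s v(s)=-\Delta v(s)$ in $L^2$, and hence
\[\phi'(s)=2\int v(s)\,\partial_sv(s)\dm=-2\int v(s)\,\Delta v(s)\dm=2\E(v(s)).\]

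Since $v(s)$ is a divergence on a probability space, $\int v(s)\dm=0$ (test against the constant function $1$), so the Poincar\'e inequality available under $\rcd$ with $K>0$ gives $\E(v(s))\ge K\phi(s)$ and therefore $\phi'(s)\ge 2K\phi(s)$. Equivalently, $\psi(s):=e^{-2Ks}\phi(s)$ is non-decreasing on $[0,t_0]$, so $\phi(s)\ge e^{2Ks}\phi(0)$. This already gives half of the identity.

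For the matching upper bound I would use Lemma~\ref{lemma:1be}, in the form $\la b_s,\nabla P_\tau g\ra=P_\tau\la b_{s-\tau},\nabla g\ra$, applied with $g=\div(b_{t+\tau})$ for $s+t\le t_0$. Substituting $\nabla\div(b_t)=\nabla P_s\div(b_{s+t})$ into $\int v(s)v(t)\dm=-\int\la b_s,\nabla\div(b_t)\ra\dm$, moving $P_s$ outside via Lemma~\ref{lemma:1be}, using mass conservation $\int P_s h\dm=\int h\dm$ and integrating by parts yields the two-point identity
\[\la v(s),v(t)\ra_{L^2}=\la v(0),v(s+t)\ra_{L^2},\qquad s,t\ge 0,\ s+t\le t_0.\]
Specializing to $t=s$ gives $\phi(s)=\la v(0),v(2s)\ra$ on $[0,t_0/2]$, and Cauchy--Schwarz produces the log-convexity $\phi(s)^2\le\phi(0)\phi(2s)$. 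Combined with the monotonicity of $\psi$ from the previous paragraph and with the reverse gradient bound $\E(v(s+t))\ge e^{2Kt}\E(v(s))$ (which one obtains by applying the 2-Bakry-\'Emery inequality to the identity $\nabla v(s)=\nabla P_t v(s+t)$), these constraints squeeze $\psi$ to be constant on $[0,t_0]$: log-convexity plus the Poincar\'e lower bound forces the spectral measure of $v(0)$ with respect to $-\Delta$ to be concentrated at the spectral-gap value $K$.

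\paragraph{Main obstacle.} The crux is upgrading the monotonicity of $\psi$ to strict constancy. The Poincar\'e inequality readily yields the lower bound $\phi(s)\ge e^{2Ks}\phi(0)$, but producing the matching upper bound genuinely requires the equality case of the 1-Bakry-\'Emery inequality (not merely the inequality itself) — encoded in the constant pointwise norm $|b_s|=e^{Ks}$ and in the commutation identity of Lemma~\ref{lemma:1be}. Without this structure, many mean-zero functions satisfying $\partial_s v=-\Delta v$ would have $\|v(s)\|^2$ growing strictly faster than $e^{2Ks}$, so the whole content of the proposition is that $v(0)=\div(b_0)$ must lie in the $K$-eigenspace of $-\Delta$.
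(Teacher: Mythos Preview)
Your lower-bound half is correct and indeed matches what the paper does later in Theorem~\ref{thm:localization}: from $P_t\div(b_{t+s})=\div(b_s)$ you get $\partial_s v(s)=-\Delta v(s)$, hence $\phi'(s)=2\E(v(s))\ge 2K\phi(s)$ by Poincar\'e, so $\phi(s)\ge e^{2Ks}\phi(0)$. The two-point identity $\la v(\sigma),v(\tau)\ra_{L^2}=\la v(0),v(\sigma+\tau)\ra_{L^2}$ you derive via Lemma~\ref{lemma:1be} is also correct.

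The gap is in the upper bound. The constraints you assemble --- log-convexity $\phi(s)^2\le\phi(0)\phi(2s)$, the Poincar\'e bound $\phi'(s)\ge 2K\phi(s)$, and the reverse gradient estimate $\E(v(s+t))\ge e^{2Kt}\E(v(s))$ --- are \emph{all automatic} for any solution of the backward equation $\partial_s v=-\Delta v$ with $v(0)$ mean-zero: writing $\phi(s)=\int_{[K,\infty)}e^{2\lambda s}\,\d\mu(\lambda)$ in the spectral picture, each of your three inequalities reduces to a pointwise inequality that holds for every $\lambda\ge K$. They therefore impose no constraint whatsoever on the spectral measure $\mu$ and cannot force it to concentrate at $\lambda=K$. (Concretely, $\mu=\tfrac12\delta_K+\tfrac12\delta_{2K}$ satisfies all of them with strict inequality in $\phi(s)\ge e^{2Ks}\phi(0)$.) Your own ``Main obstacle'' paragraph correctly identifies that the pointwise identity $|b_s|=e^{Ks}$ must be exploited, but none of the ingredients in your upper-bound argument actually use it.

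The paper's proof supplies exactly the missing piece by a different route: it runs a \emph{second} variation on the 1-Bakry--\'Emery functional $g\mapsto\int e^{Kt}|\nabla P_tg|\dm$ at its maximizer $P_sf$, obtains an Euler--Lagrange identity $V_0^1(h\nabla P_sf)+V_0^2(h\nabla P_sf)=0$, observes that $V_0^2$ vanishes structurally on multiples of $\nabla P_sf$, and is then able to test with $h=|\nabla P_sf|^{-1}$ to read off
\[
\int\big(\div(b_s)\big)^2\dm=K\int|b_s|^2\dm=K\,e^{2Ks}.
\]
This uses $|b_s|=e^{Ks}$ pointwise in an essential way and yields the exact value (hence both bounds) directly, without passing through the backward evolution at all.
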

\begin{proof}
{\bf Step 1}:

Given $g\in \V$.  Consider the following function $t\mapsto \psi(t, g)$ defined on $\R^+$ 
\[
\psi(t, g):= \int  e^{Kt} |\nabla P_t g|  \dm.
\]
From 1-Bakry-\'Emery inequality  we know $\psi$ is non-increasing in $t$ and it is differentiable almost everywhere.  Similar to the computation in Lemma \ref{lemma:1be},  we can see that
\[
\ddt \psi(t, g)= \int  K e^{Kt}   |\nabla P_t g| +\la b^g_t, \nabla \Delta P_t g \ra  \dm\leq 0
\]
where $b^g_t:=e^{Kt}\frac{\nabla P_t g}{|\nabla P_t g|} \in L^2(TX)$. Note also that $b_t^f=b_t$.

Fix $s\in (0, t_0)$. By assumption,  the function $t \mapsto \psi(t, P_s f)$ is  constant on $[0, t_0-s]$. So $\ddt \psi(t, P_s f)=0$ for  $t\in [0, t_0-s]$,  this means  $$\ddt \psi(t, P_s f)= \int Ke^{Kt}| \nabla  P_{t+s}  f| \dm+\int \la b^{P_s f}_{t}, \nabla \Delta P_{t+s} f\ra \dm=0\qquad \forall t\in [0, t_0-s].$$

Fix $t$ and consider the following  functional 
\[
\V \ni g \mapsto \ddt \psi(t, g)= \int K e^{Kt}  |\nabla P_t g| +\la b^g_t, \nabla \Delta P_t g \ra  \dm \leq 0
\]
which attains its maximum at $g=P_s f$.

Thus for any  $\epsilon \in \R$, 
\begin{eqnarray*}
0&\geq&  \ddt \psi(t,  P_s f+\epsilon g)-\ddt \psi(t, P_s f)\\
&=& \underbrace{ \int K e^{Kt} \Big( |\nabla P_t (P_s f+\epsilon g)| -  |\nabla P_{t+s} f|\Big)\dm}_{I} \\
&& + \underbrace{\int  \Big(\la b^{P_s f}_t, \nabla \Delta P_t (P_s f+\epsilon g) \ra -\la b_{t}^{P_s f}, \nabla \Delta P_{t+s} f \ra \Big) \dm}_{II}\\
 &&+ \underbrace{\int  \Big( \la b^{P_s f+\epsilon g}_t, \nabla \Delta P_t (P_s f+\epsilon g) \ra -\la b^{P_s f}_t, \nabla \Delta P_t (P_s f+\epsilon g) \ra \Big)  \dm}_{III}.
\end{eqnarray*}

Define ${\rm F}_t \subset \V$ by
\begin{equation}\label{1be-2.1}
{\rm F}_t:=\Big\{g: g\in \V \cap L^\infty(X, \mm), \frac{|\nabla P_t g|}{|\nabla P_{t+s}  f|} \in L^\infty(X, \mm) \Big\}.
\end{equation}
By  Lemma \ref{exp1}, 
\[
{\rm F}_0\subset {\rm F}_r \subset {\rm F}_t,\qquad \forall~ 0\leq r\leq t,
\]
and ${\rm F}_0$ is an algebra.

For any $g \in {\rm F}_t$ and $\epsilon$ small enough,  we can write $I, II, III$ in the following ways
\begin{eqnarray*}
I &=& K e^{Kt}\int \int^\epsilon_0 \frac{\la \nabla P_t (P_s f+\tau g), \nabla P_t g\ra }{|\nabla P_t (P_s f+\tau g) |}\,\d \tau\dm,
\end{eqnarray*}
\begin{eqnarray*}
II &=&\epsilon \int  \la b^{P_s f}_t, \nabla \Delta P_t  g \ra\dm,
\end{eqnarray*}
and
\begin{eqnarray*}
III &=&{e^{Kt}\int \la\frac{|\nabla P_{t+s} f| \nabla P_t (P_s f+\epsilon g)-|\nabla P_t (P_s f+\epsilon g)| \nabla P_{t+s} f}{|\nabla P_{t+s} f| |\nabla P_t (P_s f+\epsilon g) |}, \nabla \Delta P_t (P_s f+\epsilon g)  \ra \dm}\\
&=& e^{Kt}\int \la\frac{|\nabla P_{t+s} f| \nabla P_t (P_s f+\epsilon g)-|\nabla P_{t+s} f| \nabla P_{t+s} f}{|\nabla P_{t+s} f| |\nabla P_t (P_s f+\epsilon g) |}, \nabla \Delta P_t (P_s f+\epsilon g)  \ra \dm\\
&& +e^{Kt}\int \la\frac{| \nabla P_{t+s} f|\nabla P_{t+s} f - |\nabla P_t (P_s f+\epsilon g) | \nabla P_{t+s} f}{|\nabla P_{t+s} f| |\nabla P_t (P_s f+\epsilon g) |}, \nabla \Delta P_t (P_s f+\epsilon g)  \ra \dm\\
&=&\epsilon e^{Kt}\int \la\frac{ \nabla P_t  g}{|\nabla P_t (P_s f+\epsilon g) |}, \nabla \Delta P_t (P_s f+\epsilon g)  \ra \dm\\
&& +e^{Kt}\int \Big(\int_\epsilon^0 \frac{\la \nabla P_t (P_s f+\tau g), \nabla P_t g\ra }{|\nabla P_t (P_s f+\tau g) | |\nabla P_t (P_s f+\epsilon g) |}\,\d \tau \Big) \la \frac{ \nabla P_{t+s}  f}{|\nabla P_{t+s} f|}, \nabla \Delta P_t (P_s f+\epsilon g)  \ra \dm.
\end{eqnarray*}

Thus for any $g\in {\rm F}_t$, there is $\epsilon_0>0$ small enough such that the function
$\epsilon \to \ddt \psi(t, P_s f+\epsilon g)=I+II+III$ is absolutely continuous and hence differentiable on $[0, \epsilon_0]$.  

Similar to the proof of Lemma \ref{lemma:1be}, by a variational argument we get
\begin{eqnarray*}
0&=& \frac {\d}{\d \epsilon} \restr{\epsilon=0} \ddt \psi(t, P_s f+\epsilon g)\\
&=& \underbrace{\int \Big (K \la b^{P_s f}_t, \nabla P_t g \ra+\la b^{P_s f}_t, \nabla \Delta P_t g \ra\Big) \dm}_{V_t^1(\nabla P_t g)}\\
&& +\underbrace{e^{Kt}\int \Big( \frac{1}{|\nabla P_{t+s} f|}\la \nabla P_t g, \nabla \Delta P_{t+s} f\ra -\frac{1}{|\nabla P_{t+s} f|^3}\la \nabla P_{t+s} f, \nabla P_t g \ra \la \nabla P_{t+s} f, \nabla \Delta P_{t+s} f\ra\Big) \dm}_{V_t^2(\nabla P_t g)}.
\end{eqnarray*}

\bigskip

{\bf Step 2}:

Define
\[
\D_t:= {\rm Span} \left(\Big\{\nabla  g: g\in \V, \frac{|\nabla  g|}{|\nabla P_{t+s}  f|} \in L^\infty(X, \mm)\Big\} \right).
\]
where ${\rm Span}(S)$ means the sub-module of $L^2(TX)$ consisting of all finite $L^\infty$-linear combinations of the elements in $S$.
By definition of ${\rm F}_t$, we can see that
\begin{equation}\label{1be2-3}
 \Big\{ \nabla P_t g: g\in {\rm F}_t \Big\} \subset \D_t.
\end{equation}
Furthermore,  by linearity $V_t^1, V_t^2$ can be uniquely defined on $\D_t$  by:
\begin{eqnarray*}
 V_t^1(\nabla   g)&:=& {\int \Big (K \la b^{P_s f}_t, \nabla  g \ra+\la b^{P_s f}_t, \nabla \Delta  g \ra\Big) \dm}\\
 &=&  {\int \Big (K \la b^{P_s f}_t, \nabla    g \ra+\la \nabla \div(b^{P_s f}_t), \nabla   g \ra\Big) \dm}
\end{eqnarray*}
and
\begin{eqnarray*}
V_t^2(\nabla  g):= e^{Kt}\int \Big( \frac{\la \nabla  g, \nabla \Delta P_{t+s} f\ra}{|\nabla P_{t+s} f|} -\frac{\la \nabla P_{t+s} f, \nabla  g \ra \la \nabla P_{t+s} f, \nabla \Delta P_{t+s} f\ra}{|\nabla P_{t+s} f|^3}\Big) \dm.
\end{eqnarray*}
From the discussion above we can see that  
\begin{equation}\label{1be2-2}
V_t^1(\nabla P_t g)+V_t^2(\nabla P_t g) = 0,\qquad \forall g\in {\rm F}_t.
\end{equation}

From Lemma \ref{exp1}, we know ${\rm F}_0  \subset  {\rm F}_t$ for any $t\in [0, t_0-s]$. By \eqref{1be2-3} we get
\[
 \Big\{ \nabla P_t g: g\in {\rm F}_0\Big\}\subset \Big\{ \nabla P_t g: g\in {\rm F}_t\Big\} \subset \D_t.
\]
Combining with \eqref{1be2-2} we know
\begin{equation}\label{1be2-5}
V_t^1(\nabla P_t g)+V_t^2(\nabla P_t g)= 0,\qquad \forall  g\in {\rm F}_0.
\end{equation}

Letting $t\to 0$ in \eqref{1be2-5}, by dominated convergence theorem  we obtain
\begin{equation}\label{1be2-6}
V_0^1\big(\nabla  g\big)+V_0^2\big(\nabla g\big)= 0,\qquad \forall  g \in{\rm F}_0.
\end{equation}

By Lemma \ref{exp2} we know  ${\rm F}_0$ includes Lipschitz functions with bounded support. Then by  linearity of  $V_1, V_2$ and an approximation argument (c.f. \cite{G-N}, \cite[Theorem 3.3, \S 4]{Han-JGEA18}),  $V_1, V_2$ can be continuously extended to $$\Big\{g\nabla h: h, g\in {\rm F}_0\Big \} \subset L^2(TX).$$ 
In particular,  we obtain
\begin{equation}\label{1be2-8}
 V_0^1\big(h \nabla \P_s f \big)+  V_0^2\big(h \nabla P_s f\big)= 0,\qquad \forall~h\in {\rm F}_0.
\end{equation}

From the structure of $V_0^2$, we can see that
\begin{eqnarray*}
&&V_0^2(h \nabla  P_s f)\\
&=&{e^{Kt}\int h  \Big( \frac{1}{|\nabla P_{s} f|}\la \nabla   P_{s} f, \nabla \Delta P_{s} f\ra -\frac{1}{|\nabla P_{s} f|^3}\la \nabla P_{s} f, \nabla   P_{s} f \ra \la \nabla P_{s} f, \nabla \Delta P_{s} f\ra\Big) \dm}\\
&=& 0.
\end{eqnarray*}
By \eqref{1be2-8}, for any $h\in {\rm F}_0$, it holds
  
\begin{equation}\label{1be2-1}
 V_0^1\big(h \nabla \P_s f \big)={\int \Big (K \la b_s,   \nabla   P_s f \ra +\la \nabla \div(b_s),  \nabla P_s  f \ra \Big)h \dm}=0.
\end{equation}
By Lemma \ref{exp2}, \eqref{1be2-1} yields 
\[
K \la b_s,   \nabla   P_s f \ra +\la \nabla \div(b_s),  \nabla P_s  f \ra =0.
\]
Hence we can pick $h=\frac 1{|\nabla P_s f|}$ in \eqref{1be2-1}, so that
\[
\int K  |b_s|^2 -\big( \div(b_s)\big)^2 \dm =0.
\]
Note that $|b_s|=e^{Ks}$, it holds
\[
\int \big( \div(b_s)\big)^2 \dm =\int K  |b_s|^2=e^{2Ks} \int K  |b_0|^2=e^{2Ks}\int \big( \div(b_0)\big)^2 \dm
\]
which is the thesis.

\end{proof}

\bigskip

In the following two lemmas, we keep the same notions as in the proof of Proposition \ref{lemma:1be2}.
\begin{lemma}\label{exp1}

For any $r\leq t\leq t_0-s$, we have $ {\rm F}_r \subset {\rm F}_t$.   In particular, ${\rm F}_0$ is an algebra.

\end{lemma}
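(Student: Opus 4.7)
The key input is the equality case identity
\begin{equation}\label{plan:eq}
|\nabla P_{r+s}f| = e^{-Kr}P_r|\nabla P_s f|, \qquad 0\le r\le t_0-s,
\end{equation}
which was established as \eqref{1be-1} in the course of proving Lemma \ref{lemma:1be}, together with the 1-Bakry-\'Emery inequality in the non-equality direction. Also, by Lemma \ref{exp4} the denominator $|\nabla P_{r+s}f|$ is strictly positive $\mm$-a.e., so ratios against it are well-defined pointwise $\mm$-a.e. I will verify the three assertions in turn.

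\textbf{Step 1: $F\subset F_r$ for every $r\in[0,t_0-s]$.} Given $g=\varphi(P_s f)$ with $\varphi\in{\rm Adm}$, the function $g$ lies in $\V\cap L^\infty$ since $\varphi$ is bounded and Lipschitz (chain rule for Sobolev functions in the $\rcd$ setting), with $|\nabla g|\le \|\varphi'\|_\infty|\nabla P_s f|$. Applying the 1-Bakry-\'Emery inequality to $g$ and then invoking the equality \eqref{plan:eq},
\[
|\nabla P_r g|\le e^{-Kr}P_r|\nabla g|\le \|\varphi'\|_\infty\, e^{-Kr}P_r|\nabla P_s f| = \|\varphi'\|_\infty\,|\nabla P_{r+s} f|,
\]
so $|\nabla P_r g|/|\nabla P_{r+s}f|\in L^\infty$, i.e.\ $g\in F_r$.

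\textbf{Step 2: $F_r\subset F_t$ for $0\le r\le t\le t_0-s$.} Let $g\in F_r$ and write $C:=\big\||\nabla P_r g|/|\nabla P_{r+s}f|\big\|_\infty$. Using the semigroup property $P_t g = P_{t-r}(P_r g)$, the 1-Bakry-\'Emery inequality, and then \eqref{plan:eq} applied with the shifted parameter pair $(r+s, t-r)$,
\[
|\nabla P_t g|\le e^{-K(t-r)}P_{t-r}|\nabla P_r g|\le C\, e^{-K(t-r)}P_{t-r}|\nabla P_{r+s}f| = C\,|\nabla P_{t+s}f|,
\]
which shows $g\in F_t$. Since $g\in L^\infty\cap\V$ is preserved under the identity, $F_r\subset F_t$.

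\textbf{Step 3: $F_0$ is an algebra.} Closure under linear combinations is immediate from the triangle inequality $|\nabla(ag_1+bg_2)|\le |a||\nabla g_1|+|b||\nabla g_2|$ applied pointwise and divided by $|\nabla P_s f|$. For products, if $g_1,g_2\in F_0\subset L^\infty\cap\V$, then $g_1g_2\in L^\infty$ and by the Leibniz rule for minimal weak upper gradients on $\rcd$ spaces (or $L^\infty$-module structure of $L^2(TX)$)
\[
|\nabla(g_1g_2)|\le \|g_1\|_\infty|\nabla g_2|+\|g_2\|_\infty|\nabla g_1|,
\]
so dividing by $|\nabla P_s f|$ yields an $L^\infty$ function, giving $g_1g_2\in F_0$.

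I expect no significant obstacle: the whole argument is a bookkeeping exercise combining the 1-BE inequality with the sharpened identity \eqref{plan:eq}, plus standard calculus on $\rcd$ spaces. The only point requiring care is that all manipulations with $|\nabla P_{r+s}f|^{-1}$ are performed on the full measure set where this quantity is positive, which is exactly what Lemma \ref{exp4} guarantees.
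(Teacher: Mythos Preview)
Your proof is correct and follows essentially the same route as the paper: chain rule to bound $|\nabla g|$ by a multiple of $|\nabla P_s f|$, then the 1-Bakry--\'Emery inequality together with the equality identity \eqref{1be-1} to propagate this control to $|\nabla P_t g|/|\nabla P_{t+s}f|$, and finally the Leibniz rule for the algebra property of $F_0$. Your explicit invocation of Lemma \ref{exp4} to justify dividing by $|\nabla P_{r+s}f|$ is a point the paper leaves implicit.
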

\begin{proof}

For any $r\leq t\leq t_0-s$ and $g\in {\rm F}_r$, there is $C_2=\big\|\frac{|\nabla P_r g|}{ |\nabla P_{r+s}  f|}\big\|_{L^\infty} >0$ such that
\begin{eqnarray*}
|\nabla P_t g|  & \leq& e^{-K(t-r)} P_{t-r} |\nabla P_r g|\\
&\leq &C_2 e^{-K(t-r)} P_{t-r}  \big( |\nabla P_{r+s}  f|\big)\\
 &=&C_2 |\nabla P_{t+s} f|.
\end{eqnarray*}
Hence $ {\rm F}_r \subset  {\rm F}_t$.

In particular, for any $g, h \in  {\rm F}_0$, there is $C_3>0$ such that
\[
|\nabla (gh)| \leq \|g\|_{L^\infty} |\nabla h|+\|h\|_{L^\infty} |\nabla g| \leq C_3 |\nabla P_s f|,
\]
so by definition $gh\in {\rm F}_0$ and ${\rm F}_0$ is an algebra.

\end{proof}

Next we will  show that the set ${\rm F_0}$ includes all   Lipschitz functions with bounded support.

\begin{lemma}\label{exp2}
The set $\Lip_{bs}(X, \d)$ of  Lipschitz functions with bounded support is a subset of ${\rm F}_0$. In particular, if there is $H\in L^1(X, \mm)$ such that
\[
\int H h\dm,\qquad \forall ~ h\in {\rm F}_0,
\]
then $H=0$.
\end{lemma}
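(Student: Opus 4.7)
The plan is to establish the inclusion $\Lip_{bs}(X,\d) \subset {\rm F}_0$ first, and then deduce the duality conclusion $H = 0$ as a corollary via a standard density argument.

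For the duality step, assume the inclusion has been proven. Then $\int Hh \dm = 0$ holds for every bounded Lipschitz $h$ of bounded support. Since such functions are dense in $L^p(X,\mm)$ for every $p \in [1,\infty)$ on an $\rcd$ space, one may approximate $\mathop{\rm sgn}(H)$ in $L^1$ by truncated Lipschitz functions and test against $H \in L^1$ to conclude $\int |H|\dm = 0$, hence $H = 0$ $\mm$-a.e.

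For the main inclusion, fix $h \in \Lip_{bs}(X,\d)$. Then $h \in \V \cap L^\infty$ is automatic. The nontrivial requirement is $|\nabla h|/|\nabla P_s f| \in L^\infty(X,\mm)$. Since this ratio can genuinely blow up near the zero set of $|\nabla P_s f|$, I would proceed by a truncation-and-limit strategy: pick Lipschitz cutoffs $\eta_\epsilon : [0,\infty) \to [0,1]$ with $\eta_\epsilon \equiv 0$ on $[0,\epsilon]$, $\eta_\epsilon \equiv 1$ on $[2\epsilon,\infty)$, and $|\eta_\epsilon'| \leq 2/\epsilon$. Define $h_\epsilon := h\cdot \eta_\epsilon(|\nabla P_s f|)$. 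Using $|\nabla P_s f| = \sqrt{\Gamma(P_s f)} \in \V$ from Proposition \ref{prop: self}(2) and the chain rule for Sobolev functions, $\eta_\epsilon(|\nabla P_s f|) \in \V \cap L^\infty$, so $h_\epsilon \in \V \cap L^\infty$. Moreover $h_\epsilon$ is supported on $\{|\nabla P_s f| \geq \epsilon\}$, where the Leibniz rule together with the bound $|\eta_\epsilon'| \leq 2/\epsilon$ yields a (possibly $\epsilon$-dependent) $L^\infty$ bound for $|\nabla h_\epsilon|/|\nabla P_s f|$, so that $h_\epsilon \in {\rm F}_0$ for every $\epsilon > 0$.

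Finally, since Lemma \ref{exp4} ensures $\mm(\{|\nabla P_s f| = 0\}) = 0$, we have $h_\epsilon \to h$ $\mm$-a.e. as $\epsilon \to 0$, dominated by $\|h\|_\infty$. Under the hypothesis $\int Hg \dm = 0$ for every $g \in {\rm F}_0$, applying it to $g = h_\epsilon$ and passing to the limit by dominated convergence (against $H \in L^1$) gives $\int Hh \dm = 0$, so the duality conclusion holds for every $h \in \Lip_{bs}(X,\d)$, from which $H = 0$ follows as above. The main obstacle is controlling $|\nabla(|\nabla P_s f|)|$ on the annular region $\{\epsilon \leq |\nabla P_s f| \leq 2\epsilon\}$ that appears in the Leibniz expansion of $\nabla h_\epsilon$: to obtain a genuine pointwise $L^\infty$ bound (rather than just an $L^2$ one) one expects to exploit the extra regularity $\Gamma(P_s f) \in {\rm D}(\Delta)$ coming from $P_s f$ being the time-$s$ heat flow of a $\V$-function, together with the self-improvement of the Bakry-\'Emery inequality developed in Proposition \ref{prop: self} and in Savar\'e's work \cite{S-S}. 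Alternatively, one can absorb the role of this bound into the $L^1$ approximation step and never need the literal set-theoretic inclusion, which is morally what the subsequent applications use.
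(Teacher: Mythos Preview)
Your argument has a genuine gap, and you have already put your finger on it: the truncated functions $h_\epsilon = h\cdot \eta_\epsilon(|\nabla P_s f|)$ need not lie in ${\rm F}_0$. By the Leibniz rule,
\[
|\nabla h_\epsilon| \leq |\nabla h|\,\eta_\epsilon(|\nabla P_s f|) + |h|\,|\eta_\epsilon'(|\nabla P_s f|)|\,\big|\nabla|\nabla P_s f|\big|,
\]
and on the annulus $\{\epsilon \leq |\nabla P_s f| \leq 2\epsilon\}$ the second term, divided by $|\nabla P_s f|$, is controlled only by $\frac{2\|h\|_\infty}{\epsilon^2}\,\big|\nabla|\nabla P_s f|\big|$. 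Proposition~\ref{prop: self} gives $|\nabla|\nabla P_s f|| \in L^2$, but there is no reason for it to be in $L^\infty$ on a non-smooth $\rcd$ space, and the self-improvement results you cite do not upgrade this to a pointwise bound. Since $h_\epsilon \notin {\rm F}_0$ in general, you cannot test $H$ against $h_\epsilon$, and the ``alternative'' route you sketch collapses for the same reason.

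The missing idea is that one should exploit the \emph{equality} $|\nabla P_s f| = e^{-Ks} P_s|\nabla f|$ from the standing hypothesis, together with the dimension-free Harnack inequality for $P_s$ (Li \cite{Li-Harnack}). Pick a nonzero $0 \leq G \in L^\infty$ with $G^2 \leq \min\{|\nabla f|, 1\}$; then $P_s G^2 \leq P_s|\nabla f| = e^{Ks}|\nabla P_s f|$, and Harnack gives that $P_s G^2$ is continuous, strictly positive everywhere, hence bounded below by a positive constant on any ball $B_R(x)$. Consequently $|\nabla P_s f|$ itself has a \emph{positive infimum} on $B_R(x)$, so for $g \in \Lip_{bs}$ with $\supp g \subset B_R(x)$ one gets $|\nabla g|/|\nabla P_s f| \leq \Lip(g)\cdot C(R,x) \in L^\infty$ directly. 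This yields the literal set-theoretic inclusion $\Lip_{bs}(X,\d) \subset {\rm F}_0$ without any truncation, after which the duality statement follows by the density argument you outlined.
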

\begin{proof}

 Given $g\in \Lip_{bs}$ with $\supp g \subset  B_R(x)$ for some $R>0$ and $x\in X$. By definition, $|\nabla g| \leq \Lip(g)$ where $\Lip(g)$ is a non-negative real constant.

 By assumption $|\nabla P_s f|=e^{-Ks} P_s|\nabla f|$ and $|\nabla f|\neq 0$. Pick a non-zero non-negative function $G\in L^\infty$ satisfying $G^2 \leq  \min \{|\nabla f|, 1\}$.
So by Lipschitz regularization of the heat flow, $P_s G^2$ is Lipschitz and
\[
P_s G^2\leq P_s |\nabla f| =e^{-Ks} |\nabla P_s f|.
\]
By dimension-free Harnack inequality  \cite[Theorem 3.1]{Li-Harnack},  for any $y_1, y_2\in X$, 
\begin{equation}\label{exp2-1}
\big ((P_s G^2 )(y_1) \big) ^2 \leq  \big ((P_s G )(y_1) \big) ^2 \leq \big(P_s G^2 \big )(y_2)\exp{\Big\{\frac{K\d^2(y_1, y_2)}{e^{2Ks}-1}\Big\}}.
\end{equation}
Let $y_2=x$ in \eqref{exp2-1},  since $G$ is non-zero,  we know $(P_s G^2 )(x) >0$. Let $y_1=x$ and $y_2\in B_R(x)$ \eqref{exp2-1}, 
we know  $\inf_{y\in B_R(x)} P_s G^2>0$. Thus there is $C>0$ such that
\[
|\nabla g| \leq \Lip(g) <C \inf_{y\in B_R(x)} P_s G^2 \leq C e^{-Ks} |\nabla P_s f|\quad \text{on}\quad B_R(x)
\]
which is the thesis.

Furthermore, if 
\[
\int H h\dm,\qquad \forall \quad h\in {\rm F}_0.
\]
Via  approximation by Lipschitz function with bounded support, we can prove that $\int_E H \dm=0$ for all measurable set $E \subset X$. So $H\equiv 0$.
\end{proof}

\bigskip

\begin{theorem}[Equality in the 1-Bakry-\'Emery inequality]\label{thm:localization}
Let $\ms$ be an $\rcd$ probability space with $K\in \R$.  Assume there exists a non-constant $ f\in \V$ attaining the equality in the 1-Bakry-\'Emery inequality
\[
|\nabla P_{t_0} f| =e^{-Kt_0}P_{t_0}|\nabla f|\qquad\text{for some}~~ t_0>0.
\]  
Denote $b_s:=e^{Ks}\frac{\nabla P_s f}{|\nabla P_s f|}$.  Then  the following properties hold:
\begin{itemize}
\item [a)] $\frac{\nabla P_s f}{|\nabla P_s f|}=e^{-Ks}b_s=:b$ is independent of  $s \in (0, t_0)$;
\item [b)] $\nabla \div(b)=-K b$;
\item [c)] $\Delta \div(b)=-K \div(b)$, thus $f=\div(b)$ attains the equality in the 2-Barky-\'Emery inequality.
\end{itemize}

Furthermore, denote  by $(F_t)_{t\in \R^+}$ the regular Lagrangian flow  associated with $ b$,  we have
\begin{equation}\label{local-1}
(F_t)_\sharp \mm=e^{-\frac K2  \big(t^2+\frac2K t \div(b)\big)}\mm \quad \quad  \text{if}~~K\neq 0,
\end{equation}
and
\begin{equation}\label{local-1'}
(F_t)_\sharp \mm=\mm \quad \quad \text{if}~~~~K=0.
\end{equation}

\end{theorem}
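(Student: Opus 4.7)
The plan is to establish part (c) first via Proposition~\ref{lemma:1be2} and Lemma~\ref{lemma:div0} combined with the rigidity in Lemma~\ref{lemma}, then to deduce (a) and (b) simultaneously from a Cauchy--Schwarz saturation, and finally to compute the pushforward via the continuity-equation representation of the regular Lagrangian flow. I focus on the main case $K > 0$; the limit case $K = 0$ is degenerate (the relevant quantities collapse to zero) and must be handled separately, in the spirit of \cite{ABS19}.

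For (c), I would fix $s \in (0, t_0)$, set $\rho_s := \div(b_s)$, and exploit the two relations $P_t\rho_{t+s} = \rho_s$ (Lemma~\ref{lemma:div0}) and $\|\rho_s\|_{L^2}^2 = K e^{2Ks}$ (which falls out of the proof of Proposition~\ref{lemma:1be2}, where $\int K|b_s|^2\dm = \int (\div b_s)^2 \dm$ is established and $|b_s| = e^{Ks}$, $\mm(X)=1$). Differentiating both at $t=0$ gives $\partial_t\rho_{t+s}|_{t=0} = -\Delta\rho_s$ and the Poincar\'e equality $\|\nabla\rho_s\|_{L^2}^2 = K\|\rho_s\|_{L^2}^2$. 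Since $\rho_s$ is mean-zero on a probability space, Lemma~\ref{lemma} forces $-\Delta\rho_s = K\rho_s$ and $|\nabla\rho_s|$ to be a positive constant (necessarily $Ke^{Ks}$). Using $P_t\rho_s = e^{-Kt}\rho_s$ together with $P_t\rho_{t+s} = \rho_s$ and injectivity of $P_t$ on mean-zero functions (spectral gap for $K>0$) yields $\rho_{t+s} = e^{Kt}\rho_s$, so $h := e^{-Ks}\rho_s$ is $s$-independent; hence $\div(b_s) = e^{Ks}h$, $|\nabla h| = K$, and $-\Delta h = Kh$.

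Next, the pointwise identity $K\langle b_s, \nabla P_s f\rangle + \langle \nabla\rho_s, \nabla P_s f\rangle = 0$ proved inside Proposition~\ref{lemma:1be2}, written with $\tilde b_s := e^{-Ks}b_s = \nabla P_s f / |\nabla P_s f|$ (well defined a.e.\ by Lemma~\ref{exp4}) and divided by $e^{Ks}|\nabla P_s f| > 0$, becomes $\langle \nabla h, \tilde b_s\rangle = -K$ pointwise a.e., for every $s \in (0, t_0)$. Integration by parts gives $\|h\|_{L^2}^2 = -\int \langle \tilde b_s, \nabla h\rangle\dm = K$, and paired with the Poincar\'e equality this pins down $|\nabla h|^2 = K^2$, i.e.\ $|\nabla h| = K$. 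Since $|\tilde b_s| = 1$ a.e., the identity $\langle -\nabla h, \tilde b_s\rangle = K = |-\nabla h|\cdot|\tilde b_s|$ saturates Cauchy--Schwarz pointwise, forcing $\tilde b_s = -\nabla h/K$ for every $s$. Setting $b := -\nabla h/K$ immediately yields (a) and (b), and (c) reads $\Delta\div(b) = \Delta h = -Kh = -K\div(b)$.

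For the pushforward I would invoke the standard formula $(F_t)_\sharp \mm = \rho_t \mm$ with $\rho_t\circ F_t = \exp\bigl(-\int_0^t \div(b)\circ F_u\,du\bigr)$. From $\partial_u(h\circ F_u) = \langle \nabla h, b\rangle\circ F_u = -K$ we get $h\circ F_u = h - Ku$; the integral evaluates to $th - Kt^2/2$, and inverting $F_t$ yields $\rho_t(y) = \exp(-t\,h(y) - Kt^2/2)$, which is \eqref{local-1}. For $K = 0$ the above argument collapses ($\rho_s = 0$, hence $\div(b) = 0$) and gives \eqref{local-1'} once (a) has been secured by the separate method of \cite{ABS19}. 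The main obstacle is the exact numerical identity $|\nabla h| = K$ rather than merely ``$|\nabla h|$ is a constant''; without it the Cauchy--Schwarz step collapses. That identity is forced by matching the integration-by-parts value $\|h\|_{L^2}^2 = K$ against the Poincar\'e equality, which is where the probability normalization $\mm(X) = 1$ enters essentially.
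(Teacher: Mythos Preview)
Your proposal is essentially correct and follows the same overarching strategy as the paper: combine Lemma~\ref{lemma:div0} and Proposition~\ref{lemma:1be2} to force the Poincar\'e equality for $\rho_s = \div(b_s)$, invoke Lemma~\ref{lemma} for the eigenfunction equation, and then compute the pushforward. There are two places where the paper is more careful than your sketch. First, you differentiate $s \mapsto \rho_s$ freely, but this differentiability must be established; the paper extracts it from the commutation relation $P_t\rho_{t+s} = \rho_s$ together with the a priori bound $\|\Delta P_t\varphi\|_{L^2} \leq t^{-1}\|\varphi\|_{L^2}$ of Lemma~\ref{lemma:reg}, obtaining the Lipschitz estimate $\|\rho_{s+h} - \rho_s\|_{L^2} \lesssim h$ before differentiating. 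Second, the Liouville-type formula $\rho_t\circ F_t = \exp\bigl(-\int_0^t \div(b)\circ F_u\,du\bigr)$ is not a priori available in the non-smooth setting; the paper instead verifies directly that the candidate density $e^{-\frac{K}{2}(t^2 + \frac{2}{K}t\div(b))}$ solves the continuity equation \eqref{translation-1} and then invokes the uniqueness theory of \cite{AT-W}.

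Your derivation of (a) and (b) via Cauchy--Schwarz saturation is a genuine and rather elegant variant of the paper's argument. The paper reaches $\nabla\div(b_s) = -Kb_s$ by testing the eigenfunction equation $\Delta\div(b_s) = -K\div(b_s)$ against gradients, and then combines this with the ODE $\partial_s(e^{-Ks}\rho_s) = 0$ to conclude that $e^{-Ks}b_s$ is constant in $s$. You instead lift the pointwise identity $K\langle b_s,\nabla P_s f\rangle + \langle\nabla\rho_s,\nabla P_s f\rangle = 0$ from inside the proof of Proposition~\ref{lemma:1be2}, pin down $|\nabla h| = K$ exactly via the normalization $\|h\|_{L^2}^2 = K$, and saturate Cauchy--Schwarz to get $\tilde b_s = -\nabla h/K$ directly. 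This gives (a) and (b) in one stroke and yields a cleaner pointwise picture; the price is that it relies on an identity established \emph{within} the proof of Proposition~\ref{lemma:1be2} rather than on its statement alone. Both routes are valid for $K>0$; for $K=0$ both the paper and you defer part of the argument to \cite{ABS19}.
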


\begin{proof}
 
{\bf Part 1}:

By Lemma \ref{lemma:div0} we know $b_s \in {\rm D}(\div)$ for any $s\in (0, t_0)$.
For any $\varphi \in {\rm D}(\Delta)$ and $s,t, h>0$ with $h< \frac12  t$ and $s+t+h<t_0$, we have
\begin{eqnarray*}
 && \int \Big( {P_{t+h}\varphi-P_t \varphi}\Big) \div (b_{t+s})\dm\\
&=& \int \big(P_{t+h} \varphi  \big)\div (b_{t+s+h})\dm-\int \big(P_t \varphi \big) \div (b_{t+s})\dm\\
&&- \int \big(P_{t+h} \varphi \big)  \Big( \div (b_{t+h+s})-\div (b_{t+s})\Big)\dm\\
\text{By Lemma \ref{lemma:1be} }&=& \int \varphi \div(b_s) \dm- \int \varphi \div(b_s) \dm-\int \big(P_{h} \varphi \big)  \Big( \div (b_{h+s})-\div (b_{s})\Big)\dm\\
&=&-\int \big(P_{h} \varphi \big)  \Big( \div (b_{h+s})-\div (b_{s})\Big)\dm.
\end{eqnarray*}
Therefore,
\begin{equation}\label{local-2}
\int \Big(\frac {P_{t+h}\varphi-P_t \varphi}h\Big) \div (b_{t+s})\dm=-\int \big(P_{h} \varphi \big)  \Big(\frac{\div (b_{h+s})-\div (b_{s})}h\Big)\dm.
\end{equation}

By Cauchy-Schwarz inequality and the estimate $\| \Delta P_t \varphi \|_{L^2} \leq \frac 1t \| \varphi \|_{L^2}$ (c.f. Lemma \ref{lemma:reg}), we get the following estimate from \eqref{local-2}
\begin{eqnarray*}
\left |\int P_h \varphi  \Big( \div (b_{h+s})-\div (b_{s})\Big)\dm \right| 
&\leq&\big \| P_{t+h}\varphi-P_t \varphi \big\|_{L^2} \big\| \div (b_{t+s})\big \|_{L^2}\\
&=& \left\| \int_t^{t+h} \Delta P_{s}\varphi \,\d s  \right\|_{L^2} \big\| \div (b_{t+s})\big \|_{L^2}\\
&\leq&  \Big(h\int_t^{t+h} \|\Delta P_{s-h} (P_h \varphi)\|^2_{L^2} \,\d s\Big)^{\frac 12} \, \big\| \div (b_{t+s}) \big \|_{L^2}\\
&\leq&  h\frac 2t \| P_h \varphi\|_{L^2}  \big\| \div (b_{t+s}) \big \|_{L^2}.
\end{eqnarray*}
Thus  by arbitrariness of $\varphi$ and  the density of $P_h\big(L^2(X, \mm)\big)$  in $L^2(X, \mm)$,  we obtain
\[
\big \|  \div (b_{h+s})-\div (b_{s})\big\|_{L^2} \lesssim h.
\]
Therefore  $s\mapsto \div(b_s)$ is absolutely continuous and differentiable in $L^2$ for a.e. $s\in [0, t_0]$.  Furthermore, for  $s\in [0, t_0]$ where $\frac{\d}{\d s} \div(b_s)$ exists, it holds
\begin{eqnarray*}
&&\int (\Delta \varphi)  \div(b_s)\dm\\
\text{By Lemma  \ref{lemma:1be} } &=& \int (\Delta P_t\varphi) \div (b_{t+s})\dm\\
&=& \int (\ddt P_t\varphi) \div (b_{t+s})\dm\\
\text{Letting $h\to 0$ in \eqref{local-2} } &=& -\int \varphi  \frac{\d}{\d s}  \div(b_s)\dm.
\end{eqnarray*}

Therefore, for a.e. $s\in [0, t_0]$, 
 \begin{equation}\label{local-2.3}
 \frac{\d}{\d s}  \div(b_s) =-\Delta  \div(b_s).
 \end{equation}
So by Poincar\'e inequality, we get
\begin{eqnarray*}
\frac{\d}{\d s} \frac12 \int  \big( \div(b_s)\big)^2 \dm &=& \int   \div(b_s) \frac{\d}{\d s}  \div(b_s) \dm\\
\text{By \eqref{local-2.3}}&=&-\int  \div(b_s) \Delta   \div(b_s)\dm\\
&=& \int  |\nabla  \div(b_s)|^2 \dm\\
\text{By Poincar\'e inequality} &\geq& K \int \big(  \div(b_s)\big)^2 \dm.
\end{eqnarray*}
By Gr\"onwall's lemma, we obtain
\begin{equation}\label{local-2.31}
\int \big(  \div(b_s)\big) ^2 \dm \geq e^{2Ks} \int \big ( \div(b_0) \big)^2 \dm.
\end{equation}
 By Proposition \ref{lemma:1be2},  the inequality in \eqref{local-2.31} is actually an equality.
So for  any $s\in (0, t_0)$,    $\div(b_s)$ attains the equality in the Poincar\'e inequality.
 By Lemma \ref{lemma} we know
\begin{equation}\label{local-2.41}
\Delta \div(b_s)=-K \div(b_s).
\end{equation}

For any $\varphi \in \V$, we   have
\[
\int \la \nabla \varphi,  \nabla \div(b_s)\ra=-\int \varphi  \Delta \div(b_s)=\int \varphi K \div(b_s)=\int -K \la b_s, \nabla \varphi \ra.
\]
Thus
\begin{equation}\label{local-2.42}
\nabla  \div(b_s)=-Kb_s.
\end{equation}

In addition,  by \eqref{local-2.3} and \eqref{local-2.41}, it holds $\frac{\d }{\d s}  \div(b_s)=K \div(b_s)$ and
\[
\frac{\d }{\d s} e^{-Ks} \div(b_s)=-K e^{-Ks} \div(b_s)+e^{-Ks}\frac{\d }{\d s}  \div(b_s)=0.
\]
Combining with \eqref{local-2.42}  we know $b:=e^{-Ks} b_s$  is independent of  $s$. 

Finally,  by \eqref{local-2.41} and \eqref{local-2.42} we get
\begin{equation}
\Delta \div(b)=-K \div(b)
\end{equation}
and
\begin{equation}
\nabla  \div(b)=-Kb.
\end{equation}

\bigskip

{\bf Part 2}:

  The identities  \eqref{local-1} and \eqref{local-1'} can be proved using similar argument as \cite[\S 4]{GKKO-R} (and  \cite[\S 2]{ABS19}).  For reader's convenience, we  offer more details here. 

Firstly,  by c) and  Lemma \ref{lemma}, we know $\div(b) \in {\rm TestF}_\loc$ and $\H_{\div(b)}=0$.  Secondly,  by b) and c)  we  know $-K\nabla_{sym} b=\H_{\div(b)}=0$ (c.f. \cite[\S 5]{AT-W} or \cite[\S 3.4]{G-N} for details about the covariant derivative). If $K\neq 0$, $\nabla_{sym} b=0$.  If  $K=0$, by b) it holds
$\nabla \div(b)=0$ so $\div(b)$ is  constant. Note that $\int \div(b)\dm=0$, so $\div(b)=0$. Then  following the argument in \cite[proof of Proposition 2.8]{ABS19} we can still  prove $\nabla_{sym} b=0$.

 Combining  \cite[Theorems 9.7]{AT-W}  of Ambrosio-Trevisan and a truncation argument (c.f. \cite[Theorem 4.2]{GKKO-R}), we  can prove that  the regular Lagrangian flow  $F_t(x)$ associated with $b$ exists for all $(t, x)\in \R^+ \times X$. Thus the curve  $(F_t)_\sharp \mm$ is well-defined for all $t\in \R^+$.

By definition of regular Lagrangian flow $(F_t)$ (c.f. \cite[\S 8]{AT-W}), for any $g\in \V$, $\mu_t=(F_t)_\sharp \mm$ solves the following continuity equation
\begin{equation}\label{translation-1}
\mu_0=\mm,\qquad\ddt \int g \, \d \mu_t=\int b(g) \, \d \mu_t=\int \la b, \nabla g \ra \, \d \mu_t
\end{equation}
for a.e. $t\in \R^+$. It has been proved in \cite[\S 5]{AT-W} that the continuity equation  \eqref{translation-1} has a unique solution.  If $K=0$, it can be seen from $\div(b)=0$ that $\mu_t\equiv \mm$ solves \eqref{translation-1}.  For $K\neq 0$,  we just need to check that $\mu_t:=e^{-\frac K2  \big(t^2+\frac2Kt \div(b)\big)}\mm$   verifies  \eqref{translation-1}.

Given $g\in \V$, by computation, 
\begin{eqnarray*}
&& \ddt \int g \, e^{-\frac K2  \big(t^2+\frac2Kt \div(b)\big)}\d \mm\\ &=&
\int g\big(-Kt- \div(b) \big) \, e^{-\frac K2  \big(t^2+\frac2Kt \div(b)\big)}\d \mm\\
\text{By c)}&=& \int g\Big(-Kt+\frac1K \Delta  \big (\div(b)\big) \Big) \, e^{-\frac K2  \big(t+\frac2Kt \div(b)\big)}\d \mm\\
\text{By b)}&=& \int -Ktg \,e^{-\frac K2  \big(t+\frac2Kt \div(b)\big)} \d \mm+\int    \la b, \nabla g\ra \,e^{-\frac K2  \big(t+\frac2Kt \div(b)\big)} \d \mm\\
&&+\int Ktg |b|^2  \,e^{-\frac K2  \big(t+\frac2K t \div(b)\big)} \d \mm\\
&=&\int \la b, \nabla g \ra \,e^{-\frac K2  \big(t+\frac2K t \div(b)\big)} \d \mm
\end{eqnarray*}
which is the thesis.

\end{proof}

\begin{corollary}\label{coro:negative}
Let $\ms$ be an $\rcd$ {probability}  space with $K\leq 0$.  Then there is no non-constant function attaining  the equality  in the 1-Bakry-\'Emery inequality.
\end{corollary}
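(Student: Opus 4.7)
The plan is to argue by contradiction, exploiting the structural conclusions of Theorem \ref{thm:localization}. Assume there exists a non-constant $f\in\V$ attaining the equality in the 1-Bakry-\'Emery inequality at some $t_0>0$. Theorem \ref{thm:localization} then delivers the unit vector field $b=\nabla P_s f/|\nabla P_s f|$ (independent of $s\in(0,t_0)$ by a)), the identities $\nabla \div(b)=-Kb$ and $\Delta\div(b)=-K\div(b)$, and the explicit pushforward formulas \eqref{local-1} or \eqref{local-1'} for the regular Lagrangian flow $(F_t)$ of $b$. I will then treat the cases $K=0$ and $K<0$ separately, in each case extracting a contradiction from the fact that $\mm$ is a \emph{probability} measure.

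For $K=0$, Theorem \ref{thm:localization} already yields $\div(b)=0$ (from $\nabla \div(b)=0$ combined with $\int\div(b)\dm=0$, which is noted in the proof of the theorem). Since $b\in{\rm D}(\div)$ with $\div(b)\in L^2(\mm)$ and $P_s f\in\V$, integration by parts gives
\[
0 \;=\; -\int \div(b)\,P_s f\dm \;=\; \int \la b,\nabla P_s f\ra\dm \;=\; \int |\nabla P_s f|\dm,
\]
using the defining identity $\la b,\nabla P_s f\ra=|\nabla P_s f|$. This contradicts Lemma \ref{exp4}, which guarantees $|\nabla P_s f|>0$ $\mm$-a.e. on $X$.

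For $K<0$, I will use \eqref{local-1}: since both $\mm$ and $(F_t)_\sharp\mm$ are probability measures, the density $\rho_t=\exp(-\tfrac{K}{2}t^2-t\div(b))$ satisfies $\int\rho_t\dm=1$ for all $t\in\R$. Applying Jensen's inequality to the concave function $\log$ on the probability space $(X,\mm)$ yields
\[
0 \;=\; \log\!\int\rho_t\dm \;\geq\; \int\log\rho_t\dm \;=\; -\frac{K}{2}t^2 \;-\; t\int\div(b)\dm.
\]
Setting $C:=\int\div(b)\dm$, which is finite since $\div(b)\in L^2(\mm)\subset L^1(\mm)$ on a probability space, this rearranges to $tC\geq \tfrac{|K|}{2}t^2$ for every $t>0$, hence $C\geq \tfrac{|K|}{2}t$ for all $t>0$; letting $t\to+\infty$ produces the desired contradiction.

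The whole proof is essentially a short application of Theorem \ref{thm:localization}, so no deep new idea is required. The only mild subtleties I will need to be careful about are (i) the justification $\div(b)\in L^1(\mm)$ for the Jensen argument, which is immediate on a probability space once $\div(b)\in L^2(\mm)$, and (ii) applying Lemma \ref{exp4} at the specific time $s\in(0,t_0)$ to rule out that $|\nabla P_s f|$ vanishes on a set of positive measure in the $K=0$ case; this is the real heart of the $K=0$ argument.
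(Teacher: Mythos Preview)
Your proof is correct, but for $K<0$ it takes a longer route than the paper. The paper argues in one line from item c) of Theorem \ref{thm:localization}: since $\Delta\div(b)=-K\div(b)$, integration by parts gives
\[
0\;\le\;\int |\nabla \div(b)|^2\dm\;=\;-\int \div(b)\,\Delta\div(b)\dm\;=\;K\int (\div(b))^2\dm\;\le\;0,
\]
so $\div(b)=0$; then $b=-K^{-1}\nabla\div(b)=0$ from item b), contradicting $|b|=1$. Your Jensen/pushforward argument reaches the same contradiction but through heavier machinery; note incidentally that $C=\int\div(b)\dm=0$ (test the divergence against the constant $1$), so your inequality already reads $0\ge \tfrac{|K|}{2}t^2$ and the limit $t\to\infty$ is unnecessary.

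For $K=0$ your treatment is actually a bit more careful than the paper's terse ``$\div(b)=0$ and $b=0$'': from $\nabla\div(b)=-Kb=0$ one cannot directly conclude $b=0$, and your integration-by-parts step
\[
0=-\int \div(b)\,P_s f\dm=\int\la b,\nabla P_s f\ra\dm=\int |\nabla P_s f|\dm
\]
together with Lemma \ref{exp4} is exactly the right way to close this case.
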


\begin{proof}
By  c) of Theorem \ref{thm:localization},  $\Delta \div(b)=-K \div(b)$.  Thus 
\[
0  \leq \int  |\nabla  \div(b)|^2\dm=-\int   \div(b) \Delta \div(b)\dm=K \int  \div(b)^2\dm \leq 0.
\]
So $\div(b)=0$  and $b=0$.
\end{proof}

\bigskip
In the rest of this section we will study the structure of metric measure space, the statements and proofs are almost all taken from the paper of Gigli-Ketterer-Kuwada-Ohta \cite{GKKO-R}.

Let $u$ be a non-constant affine function (c.f.  b)  of  Lemma \ref{lemma}).  We know that  $|\nabla u|$ is  a positive constant and $u$ is Lipschitz.  By  \cite[Theorem 4.4]{GKKO-R} (or \cite[Theorem 3.16]{Han-CVPDE18}),   we know that the gradient flow $(F_t)_{t\geq 0}$ of $u$, which can be seen as a representative of  the regular Lagrangian flow associated with $-\nabla u$ in the sense of Ambrosio-Trevisan \cite[\S 8]{AT-W},   satisfies the following equality (see also \cite{GH-C}) 
\begin{equation}\label{gf-1}
\int \Big (u(x)-u\big(F_t(x)\big)\Big) \dm=\frac12 \int_0^t  \int |\nabla u|^2\circ F_s \dm\, \d s+\frac12 \int_0^t \int |\dot F_s|^2 \circ F_s \dm\,\d s
\end{equation}
and  it induces a family of  isometries   
\begin{equation}\label{gf-2}
\d\big(F_t(x),  F_t(y)\big)=\d(x, y)
\end{equation} for any $x, y \in X, t>0$. More generally, if there is a vector field $b\in L^2(TX)$ with $\div(b) \in L^\infty_\loc$ and $\nabla_{sym} b=0$, by    \cite[Theorem 2.1]{ABS19} (or \cite[Theorem 3.18]{Han-CVPDE18}), the regular Lagrangian flow associated with $b$  induces a family of   isometries.

In particular, there is a decomposition of $X$ in the form $\{X_q\}_{q\in Q}$, where $Q$ is the set of indices, such that $x_0, x_1 \in X_q$  for some $q$ if and only if there is  $t\geq 0$ such that $F_{t}(x_0)=x_1$ or $F_{t}(x_1)=x_0$.  In this case,   $X_q$ is an interval which can be  parametrized by $(F_t)_t$ (or $u$).  Define the  quotient map $\mathfrak Q: X \mapsto Q$ by
\[
q=\mathfrak{Q}(x) \Longleftrightarrow x\in X_q.
\]
There  is a disintegration  of  $\mm$ consistent with $\mathfrak{Q}$ in the following sense.

\begin{definition}[Disintegation on sets, c.f. \cite{AGS-G}, Theorem 5.3.1 and \cite{CavallettiL1}, \S 3.2.3] \label{def:disintegration}
\label{defi:dis}
Let $(X,\mathscr{X},\mm)$ denote a measure space. 
Given any family $\{X_q\}_{q \in Q}$ of subsets of $X$, a \emph{disintegration of $\mm$ on $\{X_q\}_{q \in Q}$} is a measure-space structure 
$(Q,\mathscr{Q},\qq)$ and a map
\[
Q \ni q \longmapsto \mm_{q} \in \mathcal{M}(X,\mathscr{X})
\]
so that:
\begin{enumerate}
\item For $\qq$-a.e. $q \in Q$, $\mm_q$ is concentrated on $X_q$.
\item For all $B \in \mathscr{X}$, the map $q \mapsto \mm_{q}(B)$ is $\qq$-measurable.
\item For all $B \in \mathscr{X}$, $\mm(B) = \int_Q \mm_{q}(B)\, \qq(\d q)$; this is abbreviated by  $\mm = \int_Q \mm_{q} \qq(\d q)$.
\end{enumerate}
\end{definition}

From Theorem \ref{thm:localization} and Lemma \ref{lemma}, we know  there is a  decomposition $\{X_{q}\}_{q \in Q} $ induced by   $b$ (or $-\frac 1K \nabla \div(b)$ when $K>0$) satisfying the following properties.

\begin{corollary}\label{coro:local}
Keep  the same assumptions  and notations as in Theorem \ref{thm:localization}, assume further that $K>0$.  Then  there exists a decomposition  $\{X_{q}\}_{q \in Q} $ of $ X$ induced by the regular Lagrangian flow  $(F_t)$ associated with $b$,  such that: 
\begin{enumerate}
\item   for any $q \in \Q$, $X_q$ is a  geodesic line  in $(X,\d)$;
\item   for any $q \in \Q$, $x_1, x_2 \in X_q$, there is a unique $t$ such that 
$$t=t|b|=\d(x_1, x_2).$$ and  $F_{t}(x_0)=x_1$ or $F_{t}(x_1)=x_0$;
\item  there exists a disintegration of $\mm$ on $\{X_{q}\}_{q \in Q}$
\[
\mm= \int_{Q} \mm_{q} \, \qq(\d q), \qquad \qq(Q) = 1;
\]
\item  for $\qq$-a.e. $q \in Q$ and any $t>0$, it holds  $$(F_t)_\sharp \mm_q=e^{-\frac K2  \big(t^2+\frac 2K t \div(b)\big)}\mm_q,$$ 
and the 1-dimensional metric measure space $(X_{q}, \d,\mm_{q})$ satisfies $\cd$;
\item for $\qq$-a.e. $q \in Q$,
 $\div(b)\restr{X_q}$  can be represented by
\[
  \div(b)(x)={\rm sign}\big( \div(b)(x)\big)K\d(x, x_q),~~~~x\in X_q,
\]
where $x_q$ is the unique point in $X_q$ such that $\div(b)(x_q)=0$. In particular,
\begin{equation}\label{coro:local1}
\int \div(b) \, \d \mm_q=0,\qquad \qq-\text{a.e.}~q\in Q.
\end{equation}
\end{enumerate}
\end{corollary}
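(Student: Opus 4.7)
The plan is to exploit the three properties proved in Theorem~\ref{thm:localization}: that $b$ has unit norm (since $|b_s| = e^{Ks}$ and $b = e^{-Ks}b_s$), that $\nabla \div(b) = -Kb$, and the explicit Jacobian formula for $(F_t)_\sharp \mm$. Applying Lemma~\ref{lemma} to $u := \div(b)$ gives $\H_u = 0$ and $\Gamma(u) \equiv K^2|b|^2 = K^2$, so $u$ is $K$-Lipschitz and affine. Since $-b = K^{-1}\nabla u$ has the same regularity as $b$, the flow $(F_t)$ extends to all $t \in \R$, and from $\ddt u(F_t(x)) = \la \nabla u, b\ra(F_t(x)) = -K$ one obtains the affine evolution $u(F_t(x)) = u(x) - Kt$ along every orbit.

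For (1)--(2), I would fix $x \in X$ and consider the orbit $\gamma_x(t) := F_t(x)$, $t \in \R$. As a regular Lagrangian flow curve, $\gamma_x$ is absolutely continuous with $|\dot\gamma_x| = |b| = 1$, so $\d(\gamma_x(s), \gamma_x(t)) \leq |t-s|$. The reverse inequality comes from the $K$-Lipschitz bound on $u$ combined with $u(\gamma_x(t)) - u(\gamma_x(s)) = -K(t-s)$, which forces $\d(\gamma_x(s), \gamma_x(t)) \geq |t-s|$. Hence each orbit is a unit-speed geodesic line. A natural cross-section is $Q := \{u = 0\}$: the monotonicity of $t \mapsto u(\gamma_x(t))$ shows every orbit meets $Q$ in exactly one point $x_q := F_{u(x)/K}(x)$, and $Q$ is nonempty because $\int u\dm = -K^{-1}\int \Delta u \dm = 0$ together with continuity of $u$ forces $u$ to vanish somewhere.

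For (3)--(5), the assignment $\mathfrak{Q}(x) := x_q$ is a Borel quotient map, so the disintegration theorem (Definition~\ref{def:disintegration}) supplies $\mm = \int_Q \mm_q \, \qq(\d q)$ with $\mm_q$ concentrated on $X_q$. I parametrize $X_q$ by arclength via $r \mapsto F_r(x_q)$ and write $\mm_q = h_q(r)\,\d r$. Because $F_t$ preserves fibers (so $\mathfrak{Q}\circ F_t = \mathfrak{Q}$), uniqueness of disintegration lets me restrict the identity $(F_t)_\sharp \mm = e^{-\frac{K}{2}(t^2 + \frac{2}{K}tu)}\mm$ to each $\mm_q$. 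In the arclength coordinate, $u(F_r(x_q)) = -Kr$ and $F_t$ acts by the shift $r \mapsto r+t$, so the fiberwise identity becomes $h_q(r-t) = e^{-\frac{K}{2}t^2 + Ktr}\, h_q(r)$; evaluating at $r = 0$ yields $h_q(t) = h_q(0)\, e^{-Kt^2/2}$, the Gaussian profile. Thus $(X_q, \d, \mm_q)$ is isometric to $(\R, |\cdot|, h_q(0)\,e^{-Kr^2/2}\,\d r)$, which trivially satisfies $\cd$. Claim (5) then follows from $u(F_r(x_q)) = -Kr$ together with $\d(F_r(x_q), x_q) = |r|$, while $\int u \,\d\mm_q = 0$ is the evenness of the Gaussian.

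The principal obstacle is the fiberwise push-forward step, namely extracting the identity $(F_t)_\sharp \mm_q = e^{-\frac{K}{2}(t^2 + \frac{2}{K}tu)}\mm_q$ from its global counterpart. This rests on Borel measurability of $\mathfrak{Q}$, uniqueness of the disintegration, and compatibility of $F_t$ with the quotient; the latter uses the geodesic line property (1)--(2) to ensure the arclength parametrization is a genuine metric isometry onto $X_q$. Once this reduction is in place, everything reduces to the one-variable computation sketched above.
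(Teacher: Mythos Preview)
Your proposal is correct and proceeds along a somewhat different, arguably cleaner, route than the paper for items (1), (2) and (5).  The paper deduces (5) by combining the energy--dissipation equality \eqref{gf-1} with Lemma~\ref{lemma:lipbound}, which compares the global weak upper gradient $|\nabla u|$ with the fibre-restricted slope $\lip{u\restr{X_q}}$; this comparison lemma is needed precisely because on dimension-free $\rcd$ spaces one does not know that $|\nabla g|=\lip g$ in general (see Remark~\ref{remark:lipbound}).  You bypass Lemma~\ref{lemma:lipbound} entirely: once you have the affine evolution $u(F_t(x))=u(x)-Kt$ and the global $K$-Lipschitz bound on $u$ (via Sobolev-to-Lipschitz), the two-sided estimate $\d(\gamma_x(s),\gamma_x(t))=|t-s|$ follows immediately, and (5) drops out of the arclength parametrization.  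You also compute the Gaussian density on each fibre explicitly, which the paper defers to the proof of Proposition~\ref{thm:berigidity2}; this makes both the $\cd$ property in (4) and the mean-zero claim \eqref{coro:local1} transparent.

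The one step you should not leave as a bare assertion is the pointwise chain rule $\ddt u(F_t(x))=\la\nabla u,b\ra(F_t(x))=-K$ for the regular Lagrangian flow.  In the non-smooth Ambrosio--Trevisan framework this is not given for free for an unbounded (albeit Lipschitz) test function.  A safe justification is: the continuity equation gives $\int\big(u(x)-u(F_t(x))\big)\dm=Kt$, while Lipschitzness of $u$ together with $|\dot\gamma_x|\le |b|=1$ gives $u(x)-u(F_t(x))\le Kt$ for $\mm$-a.e.\ $x$; equality of the integral with the pointwise upper bound forces $u(x)-u(F_t(x))=Kt$ $\mm$-a.e., and continuity of $u$ and of $F_t$ (which is an isometry by Lemma~\ref{lemma}\,d)) upgrades this to every $x$.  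With this in place your argument is complete.
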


\begin{proof}
From the construction of the decomposition discussed  before, it is not hard to see the  validity of assertions (1)- (3) which are actually a variant of measure-decomposition theorem (see also \cite{CavallettiL1}). Assertion (4) is a consequence of \eqref{local-1} in Theorem \ref{thm:localization}. We  will  just prove   (5).
For $u:=\frac 1K \div(b)$,   by \eqref{gf-1} and   Lemma \ref{lemma:lipbound}  below  we have
\begin{eqnarray*}
&&\int_Q \int_{X_q} \Big (u(x)-u\big(F_t(x)\big)\Big) \, \d \mm_q\, \d \qq(q) \\&=&
\int \Big (u(x)-u\big(F_t(x)\big)\Big) \dm\\
&=&\frac12\int_0^t \int |\nabla u|^2\circ F_s \dm\, \d s+\frac12 \int_0^t \int |\dot F_s|^2 \circ F_s \dm\,\d s\\
&\geq& \frac12 \int_Q \Big ( \int_0^t \int_{X_q}  |{\rm lip} (u\restr{X_q})|^2\circ F_s\,\d \mm_q \,\d s+\frac12 \int_0^t \int |\dot F_s|^2\circ F_s \dm_q\,\d s \Big) \,  \d \qq(q).
\end{eqnarray*}
Thus for a.e. $q\in Q$,   $X_q$ is the trajectories of the gradient flow of   $u=\frac 1K \div(b)$:
$$\left|\frac 1K  \div(b)(x_1)-\frac 1K  \div(b)(x_2) \right |=\frac 1K  |\nabla  \div(b) |\d(x_1, x_2)=\d(x_1, x_2),~~~\forall x_1, x_2 \in X_q.$$
As $u$ is non-constant,  there  is  a unique point $x_q \in X_q$ such that $\div(b)(x_q)=0$. So  $\div(b)$  can be represented by
\[
  \div(b)(x)={\rm sign}\big( \div(b)(x)\big)K\d(x, x_q),\qquad \forall x\in X_q,
\]

\end{proof}

\begin{lemma}\label{lemma:lipbound}
For any $g\in \V\cap \Lip(X, \d)$ and $s\in [0, t]$,  the following inequality holds
\begin{equation}\label{lipbound-1}
\int |\nabla  g|^2 \circ F_s \dm  \geq \int_Q \int_{X_q}  |{\rm lip} (g\restr{X_q})|^2\circ F_s\,\d \mm_q  \d \qq(q).
\end{equation}

\end{lemma}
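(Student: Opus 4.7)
The plan is to reduce \eqref{lipbound-1} via the disintegration of $\mm$ from Corollary \ref{coro:local} to a fiberwise slope comparison, and to transfer the resulting pointwise inequality through the composition with $F_s$ using the explicit density of $(F_s)_\sharp\mm_q$.

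First, by the disintegration $\mm = \int_Q \mm_q\,\qq(\d q)$ provided in Corollary \ref{coro:local}(3), Fubini rewrites the left-hand side of \eqref{lipbound-1} as
$$\int |\nabla g|^2\circ F_s\,\dm = \int_Q \int_{X_q} |\nabla g|^2\circ F_s\,\d\mm_q\,\qq(\d q),$$
so it suffices to prove the fiberwise inequality
$$\int_{X_q} |\nabla g|^2\circ F_s\,\d\mm_q \;\geq\; \int_{X_q} |\lip{g\restr{X_q}}|^2 \circ F_s\,\d\mm_q$$
for $\qq$-a.e.\ $q\in Q$. I would derive this from the pointwise bound $\lip{g\restr{X_q}}(y)\leq |\nabla g|(y)$ $\mm_q$-a.e., together with the fact that the substitution $y\mapsto F_s(y)$ preserves $\mm_q$-a.e.\ inequalities; the latter is ensured by Corollary \ref{coro:local}(4), which gives $(F_s)_\sharp\mm_q = e^{-\frac K2(s^2+\frac2K s \div(b))}\mm_q$, a strictly positive and locally bounded density with respect to $\mm_q$.

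To obtain the pointwise bound, I would use on one hand the trivial restriction estimate $\lip{g\restr{X_q}}(y)\leq \lip{g}(y)$, which holds at every $y\in X_q$: by Corollary \ref{coro:local}(1) the subset $X_q$ is a geodesic line in $(X,\d)$, so its intrinsic distance coincides with $\d$, and the limsup defining the local Lipschitz slope is taken over a smaller set. On the other hand, on an $\rcd$ space every Lipschitz function $g$ satisfies $|\nabla g|=\lip{g}$ $\mm$-a.e.; this is a Cheeger-type identification, which is a standard ingredient of the calculus recalled in Section \ref{section:be} (and already invoked in the Riemannian instance stated in the introduction, the $\rcd$ case following from the local doubling and Poincar\'e properties). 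Combining the two estimates yields $\lip{g\restr{X_q}}\leq |\nabla g|$ $\mm$-a.e.\ on $X$, and the disintegration transfers this to $\mm_q$-a.e.\ validity for $\qq$-a.e.\ $q$, after which squaring and integrating against the pushforward density gives the fiberwise inequality.

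The main obstacle I anticipate is the joint $(q,y)$-measurability of $\lip{g\restr{X_q}}(y)$ needed to apply Fubini rigorously in the first step. This should be handled by fixing a $\qq$-measurable section $q\mapsto x_q$ of the quotient map $\mathfrak Q$ — which is supplied by Corollary \ref{coro:local}(5), since when $K>0$ the point $x_q$ is singled out as the unique zero of $\div(b)$ on each fiber — and then using the unit-speed parametrization $(t,q)\mapsto F_t(x_q)$ to make $(t,q)\mapsto g(F_t(x_q))$ jointly measurable; the fiberwise slope $\lip{g\restr{X_q}}(F_t(x_q))$ is then recovered as a limsup of rational-valued difference quotients of this jointly measurable map, inheriting measurability in $(q,y)$.
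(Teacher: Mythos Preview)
Your argument has a genuine gap at its central step. You claim that on an $\rcd$ space every Lipschitz function satisfies $|\nabla g|=\lip{g}$ $\mm$-a.e., and you justify this by appealing to ``local doubling and Poincar\'e properties.'' But the paper works in the \emph{dimension-free} setting ${\rm RCD}(K,\infty)$, where volume doubling is not available and Cheeger's identification theorem does not apply; indeed Remark~\ref{remark:lipbound}, placed immediately after this lemma, states explicitly that whether $|\nabla g|=\lip{g}$ holds $\mm$-a.e.\ on $\rcd$ spaces is an open question, and that the strict inequality $|\nabla g|<\lip{g}$ is not ruled out. So the pointwise bound $\lip{g\restr{X_q}}\le \lip{g}=|\nabla g|$ on which your fiberwise inequality rests is unjustified in this setting.

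The paper's proof is designed precisely to sidestep this obstacle. Rather than comparing $\lip{g}$ to $|\nabla g|$ on the ambient space, it takes an approximating sequence $g_n\to g$ with $\lip{g_n}\to|\nabla g|$ in $L^2$ (such a sequence exists by the very definition of the Cheeger energy), passes to the disintegration, uses the trivial restriction bound $\lip{g_n}\restr{X_q}\ge \lip{(g_n\restr{X_q})}$ along the approximants, and then applies Fatou's lemma together with the lower semicontinuity of the one-dimensional energy on each fiber. The only place Cheeger's theorem is invoked is on the one-dimensional fibers $(X_q,\d,\mm_q)$, where it is legitimately available. Your shortcut would work on ${\rm RCD}(K,N)$ spaces with $N<\infty$, but not here.
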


\begin{proof}
Let $(g_n)_n \subset  L^2$ be a sequence of Lipschitz functions such that $g_n \to g$ and $|\lip{g_n}| \to |\nabla g|$ in $L^2(X, (F_s)_\sharp \mm)$.
Note that $ (F_s)_\sharp \mm=\int_Q \big( (F_s)_\sharp \mm_q \big) \,\d\qq(q)$, there is a subsequence of $(g_n)$, still denoted by $(g_n)$, such that
$g_n\restr{X_q} \to g\restr{X_q}$  in $L^2(X_q, (F_s)_\sharp\mm_q)$ for $\qq$-a.e. $q\in Q$.

Notice that $|\lip{g_n}|\restr{X_q} \geq |{\rm lip}(g_n\restr{X_q})|$, and it is known that $ |{\rm lip} (g\restr{X_q})|=|\nabla g\restr{X_q}|$ $\mm_q$-a.e. on $X_q$ (since  the values of local Lipschitz constant and weak upper gradient are independent of  (locally Lipschitz) weighted measures).  Then we have
\begin{eqnarray*}
\int |\nabla  g|^2\circ F_s \dm  &=& \int |\nabla  g|^2\, \d (F_s)_\sharp \mm\\
&=& \lmt{n}{\infty} \int |\lip{g_n}|^2 \, \d (F_s)_\sharp \mm\\&=&
 \lmt{n}{\infty} \int_Q \Big(\int_{X_q} |\lip{g_n}|^2\,\d (F_s)_\sharp \mm_q  \Big)\d \qq(q)\\
 \text{By Fatou's lemma} &\geq&   \int_Q \lmt{n}{\infty} \Big(\int_{X_q} |\lip{g_n}|^2\,\d (F_s)_\sharp\mm_q  \Big)\d \qq(q)\\
 &\geq&  \int_Q \lmt{n}{\infty} \Big(\int_{X_q} |{\rm lip}(g_n\restr{X_q})|^2\,\d (F_s)_\sharp\mm_q  \Big)\d \qq(q)\\
\text{By definition of the energy form $\E$} &\geq& 
 \int_Q  \Big(\int_{X_q} |\nabla g\restr{X_q}|^2\,\d (F_s)_\sharp\mm_q  \Big)\d \qq(q)\\
 &=&\int_Q \int_{X_q}  |{\rm lip} (g\restr{X_q})|^2\,\d (F_s)_\sharp \mm_q  \d \qq(q)\\
  &=&\int_Q \int_{X_q}  |{\rm lip} (g\restr{X_q})|^2\circ F_s \,\d \mm_q  \d \qq(q)
\end{eqnarray*}
which is the thesis.
\end{proof}

\begin{remark}\label{remark:lipbound}
Unlike the well-known result of Cheeger \cite[Theorem 6.1]{C-D} which tells us  that $|\nabla g|=|\lip{g}|$ $\mm$-a.e. if $\ms$ satisfies volume doubling property and supports a local Poincar\'e inequality, it is still unknown whether this result is still true on $\rcd$ spaces or not.  In \cite{GH-I}, the author and Gigli prove that $|\nabla g|_p=|\nabla g|$ for all $p>1$ on $\rcd$ spaces. But it is still possible that $|\nabla g|<|\lip{g}|$. 
\end{remark}
\subsection{Proof of the rigidity}\label{sec:proofbe}
In this part, we will complete the proof of Theorem \ref{th3-intro} by proving the following Proposition \ref{thm:berigidity2}, \ref{thm:berigidity}.

In  Proposition \ref{prop:1dimrigidity}, we proved the rigidity of the 2-Bakry-\'Emery inequality for 1-dimensional spaces.  Generally,  it is   proved by Gigli-Ketterer-Kuwada-Ohta \cite{GKKO-R}  that  $\ms$ is  isometric to the product space of the 1-dimensional Gaussian space and an $\rcd$ space, if there is a non-constant function attaining the equality in the Poincar\'e inequality.
As a consequence of Theorem \ref{thm:localization}, Lemma \ref{lemma} and the result of Gigli-Ketterer-Kuwada-Ohta,   we get  the following proposition.

\begin{proposition}[c.f. \cite{GKKO-R}, Theorem 1.1]\label{thm:berigidity2}
Let $\ms$ be an $\rcd$ space with $K>0$. Assume there is a non-constant $f\in \V$ attaining the equality in the 1-Bakry-\'Emery inequality.   Then  there exists an $\rcd$-space $(Y, \d_Y, \mm_Y)$,  such that
the metric space $\ms$ is isometric to the product space $$  \Big(\R, | \cdot |, \sqrt{{K}/(2\pi)} \exp(-{Kt^2}/2)\mathcal \d t \Big) \times (Y, \d_Y, \mm_Y)$$ equipped with the $L^2$-product metric and product measure.

\end{proposition}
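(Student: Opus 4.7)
The plan is to reduce this splitting statement to the rigidity of the Poincar\'e inequality proved by Gigli--Ketterer--Kuwada--Ohta via the structural results already obtained in Theorem \ref{thm:localization} and Lemma \ref{lemma}.

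First, I would apply Theorem \ref{thm:localization} to the non-constant function $f$: it yields a vector field $b := \nabla P_s f/|\nabla P_s f|$ (independent of $s\in(0,t_0)$) with $|b|=1$ satisfying the two identities
\begin{equation*}
\nabla\div(b) = -K\,b, \qquad \Delta\div(b) = -K\,\div(b).
\end{equation*}
In particular $u := \div(b)$ satisfies $\Delta u = -K u$, so by Lemma \ref{lemma} it attains the equality in the Poincar\'e inequality and in the 2-Bakry-\'Emery inequality, and hence (by part (b) of Lemma \ref{lemma}) it is affine with $|\nabla u|^2 = \Gamma(u)$ equal to a positive constant.

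Second, I would verify that $u$ is genuinely non-constant. From $\nabla u = -K b$ and $|b|=1$ we get $|\nabla u| = K > 0$ pointwise $\mm$-a.e., so $u$ is non-constant. (Note: if instead $u$ were constant, then $\nabla u = 0$ would force $b = 0$, contradicting $|b|=1$.) Thus the hypotheses of the main splitting theorem of Gigli-Ketterer-Kuwada-Ohta \cite{GKKO-R} on the rigidity of the spectral gap on $\rcd$ spaces with $K>0$ are fulfilled: there exists a non-constant $u$ attaining equality in the sharp Poincar\'e inequality $\int\Gamma(u)\,\d\mm = K\int u^2\,\d\mm$ (after subtracting its mean).

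Third, invoking \cite[Theorem 1.1]{GKKO-R}, the space $\ms$ is isometric as a metric measure space to the product
\begin{equation*}
\Big(\R,\,|\cdot|,\,\sqrt{K/(2\pi)}\,e^{-Kt^2/2}\,\d t\Big) \times (Y,\d_Y,\mm_Y)
\end{equation*}
for some $\rcd$ space $(Y,\d_Y,\mm_Y)$, equipped with the $L^2$-product metric and the product measure. This gives the desired conclusion.

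The main obstacle is not in this proposition itself but in the prerequisites: once Theorem \ref{thm:localization} has produced the vector field $b$ with $\nabla\div(b) = -Kb$ and $\Delta\div(b)=-K\div(b)$, the conclusion is essentially a direct citation of the GKKO splitting theorem, the only bookkeeping point being to check non-triviality of $u=\div(b)$, which follows immediately from $|b|\equiv 1$ and $K>0$.
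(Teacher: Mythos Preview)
Your proposal is correct and follows essentially the same route as the paper: both apply Theorem \ref{thm:localization} to obtain the vector field $b$ with $\Delta\div(b)=-K\div(b)$, observe (via Lemma \ref{lemma}) that $u=\div(b)$ (the paper uses the normalization $u=\frac{1}{K}\div(b)$) is a non-constant function attaining equality in the Poincar\'e inequality, and then invoke \cite[Theorem 1.1]{GKKO-R} to conclude the splitting. Your explicit check that $|\nabla u|=K>0$ to rule out constancy is a nice clarification that the paper leaves implicit.
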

\begin{proof}[Sketch of the proof]
By (c) of Theorem \ref{thm:localization} and Lemma \ref{lemma},   $u=\frac 1K \div\Big(\frac{\nabla P_t f}{|\nabla P_t f|}\Big)$ attains the equality in the Poincar\'e inequality.  Then the assertion follows from \cite[Theorem 1.1]{GKKO-R}.

For reader's convenience, we  offer more details here. 
By Theorem \ref{thm:localization} and Lemma \ref{lemma}, $\H_u=0$ and  $|\nabla u|=1$, so that $-\nabla u$ induces  a  family of isometries $(F_t)$.  By Corollary \ref{coro:local}, there is a disintegration $\mm=\mm_q \qq(\d q)$ associated with the one-to-one map  $\Psi:  \R \times u^{-1}(0)\ni (r, x)    \mapsto F_r(x) \in X$.

 In addition,  assume (in the coordinate of $\Psi$) that $u\big( (0, y) \big)=0$.    By  (4) and (5) of Corollary \ref{coro:local},  up to a reflection, we may write 
 \[
 u\big( (r, y) \big)=r,
 \]
 and
 \[
 (F_r)_\sharp \mm_q=e^{-\frac K2  \big(r^2+ 2 u r \big)}\mm_q.
 \]
 Hence $\mm_q \ll \mathcal H^1 \restr{X_q}$ with continuous density $h_q$,  and 
 \[
h_q\big( (r, y) \big)=e^{-\frac K2  \big(r^2+ 2 u( (0, y) ) r \big)}h_q\big( (0, y) \big)=e^{-\frac {Kr^2} 2}h_q\big( (0, y) \big).
 \]
 So $\mm$ is  isomorphic to a product measure $ \Phi_K \times \mm_Y$. 
 
 Following  Gigli's strategy of the splitting theorem \cite{G-S},   one can prove that  the map $\Psi$ induces an isometry between the Sobolev spaces $W^{1,2}\big(\Psi^{-1}(X)\big)$ and $W^{1,2}\big(\R \times u^{-1}(0)\big)$.  Then from Sobolev-to-Lipschitz property we know that  $\Psi$ is an isometry  between metric measure spaces (see \cite[\S 6]{G-S}, \cite{G-Overview}, and  \cite[\S 5]{GKKO-R} for details).
\end{proof}

\bigskip

Finally, we have the following characterization of  extreme functions.

\begin{proposition}\label{thm:berigidity}
Under the same assumption  and keep the same notations as Proposition \ref{thm:berigidity2},    $ f$ can be  represented  in the coordinate of the product space $\R \times Y$,  by $$  f(r, y)=\int_0^r g(s)\,\d s,\qquad (r, y) \in \R \times Y$$
for some non-negative $g\in L^2(\R, \phi_K \mathcal L^1)$.
In particular, if $f$ attains the equality in the 2-Bakry-\'Emery inequality,   then $P_t f(r, y)=C e^{Kt}  r$ for some constant $C$.

\end{proposition}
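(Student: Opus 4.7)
The plan is to transport the vector field $b$ of Theorem \ref{thm:localization} through the product isomorphism $\Psi\colon\R\times u^{-1}(0)\to X$ of Proposition \ref{thm:berigidity2}, and then read off the structure of $f$ from the fact that $\nabla P_s f$ is always parallel to $b$. The first step is to identify $b$ with the unit vector field $\partial_r$ on $\R\times Y$: since $\Psi(r,y)=F_r(y)$ is built from the regular Lagrangian flow $(F_t)$ of $b$ (equivalently of $-\frac{1}{K}\nabla\div(b)$) as in Corollary \ref{coro:local}, with the $\R$-factor parametrizing the flow lines and $u^{-1}(0)\cong Y$ the set of indices, in product coordinates $b$ is precisely $\partial_r$.

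With this identification in hand, Theorem \ref{thm:localization}(a) gives
\[
\nabla P_s f = |\nabla P_s f|\,b = |\nabla P_s f|\,\partial_r, \qquad \forall\,s>0,
\]
so the $Y$-component of $\nabla P_s f$ vanishes $\mm$-a.e.\ and $P_s f$ depends only on $r$. Since $P_s f\to f$ in $L^2$ as $s\downarrow 0$ and the subspace of $r$-dependent functions is closed in $L^2(\R\times Y)$, we obtain $f(r,y)=F(r)$ for some $F\in L^2(\R,\phi_K\mathcal L^1)$. Because $f\in\V$, also $F\in W^{1,2}(\R,\phi_K\mathcal L^1)$; hence $F$ is absolutely continuous, and setting $g:=F'\in L^2(\R,\phi_K\mathcal L^1)$ we have $F(r)=F(0)+\int_0^r g(s)\,\d s$. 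The identity $\nabla f=|\nabla f|\,b$ forces $g=\partial_r f\geq 0$ a.e.; after absorbing the irrelevant additive constant $F(0)$, the representation $f(r,y)=\int_0^r g(s)\,\d s$ with $g\geq 0$ follows.

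For the equality case in the 2-Bakry-\'Emery inequality, Lemma \ref{lemma} provides $-\Delta f = K\bigl(f-\int f\dm\bigr)$. Since $f$ depends only on $r$, this reduces to the corresponding eigenvalue equation for $F$ on the one-dimensional Gaussian space $(\R,|\cdot|,\phi_K\mathcal L^1)$, so $F$ attains equality in the one-dimensional Poincar\'e inequality; Proposition \ref{prop:1dimrigidity} then yields $F(r)=Cr$ and $P_tF(r)=Ce^{Kt}r$. Because the heat flow on the product factors as the tensor product of the heat flows on the factors, $P_t f(r,y) = P_t F(r) = Ce^{Kt}r$ as claimed.

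The main obstacle is the initial identification of $b$ with $\partial_r$: it requires tracking a vector field through the metric-measure isomorphism $\Psi$ which is itself constructed from $b$, and then justifying that the pointwise directional information of $\nabla P_s f$ translates into genuine $y$-independence on the non-smooth product $\R\times Y$. Once this is settled, the remaining steps follow directly from Theorem \ref{thm:localization}(a) and Proposition \ref{prop:1dimrigidity}.
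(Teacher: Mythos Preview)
Your proposal is correct and follows essentially the same approach as the paper. For the representation $f(r,y)=\int_0^r g$, both you and the paper identify $b$ with $\partial_r$ (equivalently $\nabla f/|\nabla f|=\nabla r$) via the isometry of Proposition~\ref{thm:berigidity2}; the paper works directly at $s=0$ while you pass through $P_sf$ and let $s\downarrow 0$, but this is cosmetic. For the 2-Bakry--\'Emery case there is a minor divergence: the paper invokes Lemma~\ref{lemma:lipbound} and \eqref{coro:local1} with a localization argument to obtain the one-dimensional Poincar\'e equality on each fibre, whereas you use the eigenvalue equation from Lemma~\ref{lemma}(4) together with the product splitting of $\Delta$ and $P_t$. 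Your route is slightly cleaner here, since once $f=F(r)$ is established the reduction to the one-dimensional space is immediate and Proposition~\ref{prop:1dimrigidity} applies without further disintegration; the paper's argument is more self-contained in that it does not appeal to the factorization of the Laplacian through the product, though that factorization is standard once Proposition~\ref{thm:berigidity2} is in hand.
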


\begin{proof}
By Theorem \ref{thm:localization} and the proof of  Proposition \ref{thm:berigidity2}, we know
\begin{equation}\label{1-ber}
\frac{\nabla  f}{|\nabla f|}=\nabla \frac 1K \div \Big(\frac{\nabla  f}{|\nabla f|}\Big)=\nabla r.
\end{equation}   So for $\mm_Y$-a.e. $y\in Y$,  $$ f(r, y)-f(0, y)=\int_0^r |\nabla f|(s, y) \, \d s.$$
Given $r\in \R$, from \eqref{1-ber} we can see that $f(r, y)$ is independent of $y\in Y$, so we can assume $f(0, y)=0$ and  denote  $g(s):=|\nabla f|(s, y)$ which is the thesis.

   If $f$  also attains the equality in the 2-Bakry-\'Emery inequality, by Lemma \ref{lemma:lipbound}, \eqref{coro:local1},  and a standard localization argument we can see that $f (\cdot, y)$ attains the quality in the 1-dimensional  Poincar\'e  inequality for $\mm_Y$-a.e. $y\in Y$. Then the  second assertion follows from Proposition \ref{prop:1dimrigidity}.
\end{proof}

\section{Rigidity of some  functional  inequalities}\label{sec:funct}

\subsection{Equality in Bobkov's inequality}
 In this part,  we will study the cases of  equality  in  Bobkov's inequality, as well as the  Gaussian isoperimetric inequality,  and prove the corresponding  rigidity theorems,

Using an  argument   of Carlen-Kerce  \cite[Section 2]{CarlenKerce01}   (which was  firstly used  by Ledoux in \cite{Ledoux98},   see also a recent work of Bouyrie \cite{Bouyrie17}), we can prove  the following monotonicity formula concerning $\rcd$ spaces for $K > 0$. 

\begin{proposition}\label{prop1:monotone}
Let  $\ms$ be  a  $\rcd$ space with  $K> 0$. For  any $f: X \mapsto [0, 1]$, $t>0$,  denote $f_t=P_t f$ and define  
\begin{equation}\label{eq1:prop1:monotone}
J_K(f_t):= \int \sqrt{I_K(f_t)^2+\Gamma( f_t)}\,\d \mm
\end{equation}
where $I_K$ is the Gaussian isoperimetric profile defined in \eqref{intro:GII-1}.

Then  for $\mathcal L^1$-a.e. $t$, we have 
\begin{eqnarray*}
&&\ddt J_K(f_t)\\
 &=&  -\int G_K^{-\frac32} \Big ( \big\| I_K\H_{f_t}-I_K' \nabla f_t \otimes \nabla f_t \big \|^2_{\rm HS} +\| \H_{f_t}\|_{\rm HS}^2\Gamma(f_t)
 -\frac14 \Gamma\big(\Gamma(f_t)\big) \Big )\,\d \mm\\&&-\int G_K^{-\frac12} \Big(\d\bRic(f_t, f_t)-K\Gamma(f_t)\dm\Big)
\end{eqnarray*}
where $G_K=I_K(f_t)^2 +\Gamma(f_t)$.

 In particular, $J_K(f_t)$ is non-increasing in $t$. 
\end{proposition}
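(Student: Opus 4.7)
The plan is to differentiate $J_K(f_t)$ under the integral sign, insert the measure-valued Bochner identity of Proposition~\ref{prop:measurebochner} to trade $\Gamma(f_t,\Delta f_t)$ for Hessian and $\bRic$ contributions, and then absorb the surviving $\Delta G_K$ term into an exact divergence via $\int \Delta\sqrt{G_K}\,\dm = 0$. Monotonicity will then follow from a manifest sign check on each remaining piece.

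First I would fix $t > 0$ and exploit the regularizing action of $P_t$: the Bakry--\'Emery Lipschitz estimates place $f_t$ inside ${\rm TestF}_{\loc}$, where $\H_{f_t}$ is a symmetric bilinear form on $L^2(TX)$, $\Gamma(f_t)$ has measure-valued Laplacian ${\bf \Delta}\Gamma(f_t)$, and $\bRic(f_t,f_t)$ is a Radon measure satisfying $\bRic(f_t,f_t) \geq K\Gamma(f_t)\,\mm$. Writing $G_K = I_K(f_t)^2 + \Gamma(f_t)$, using $\partial_t f_t = \Delta f_t$ in the chain rule, and recalling $I_K I_K'' = -K$, I obtain
\begin{equation*}
\ddt J_K(f_t) = \int \frac{I_K(f_t)\,I_K'(f_t)\,\Delta f_t + \Gamma(f_t,\Delta f_t)}{\sqrt{G_K}}\,\dm
\end{equation*}
for $\mathcal L^1$-a.e. $t > 0$, with the absolute continuity in $t$ following from the $L^2$ estimates of Lemma~\ref{lemma:reg}.

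Next I would substitute the non-smooth Bochner identity $\Gamma(f_t,\Delta f_t)\,\dm = \tfrac12 {\bf \Delta}\Gamma(f_t) - \|\H_{f_t}\|_{\rm HS}^2\,\dm - \bRic(f_t,f_t)$ and combine it with the chain-rule identity $\Delta I_K(f_t)^2 = 2 I_K I_K'\,\Delta f_t + 2\big((I_K')^2 - K\big)\,\Gamma(f_t)$, packaging two of the resulting terms into $\tfrac12 {\bf \Delta}G_K$. The pointwise identity $\Delta\sqrt{G_K} = \tfrac{\Delta G_K}{2\sqrt{G_K}} - \tfrac{\Gamma(G_K)}{4 G_K^{3/2}}$ together with $\int \Delta\sqrt{G_K}\,\dm = 0$ then trades $\tfrac12 {\bf \Delta}G_K/\sqrt{G_K}$ for $\tfrac14\Gamma(G_K)/G_K^{3/2}$. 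Expanding
\begin{equation*}
\Gamma(G_K) = 4 I_K^2 (I_K')^2 \Gamma(f_t) + 8 I_K I_K'\,\H_{f_t}(\nabla f_t,\nabla f_t) + \Gamma(\Gamma(f_t))
\end{equation*}
and comparing with $\|I_K\H_{f_t} - I_K'\,\nabla f_t \otimes \nabla f_t\|_{\rm HS}^2 = I_K^2\|\H_{f_t}\|_{\rm HS}^2 - 2 I_K I_K'\,\H_{f_t}(\nabla f_t,\nabla f_t) + (I_K')^2 \Gamma(f_t)^2$ then collapses the remaining bracket into the stated integrand.

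Finally, each ingredient has a definite sign: $\|I_K\H_{f_t} - I_K'\,\nabla f_t\otimes\nabla f_t\|_{\rm HS}^2$ is a perfect square; the matrix--vector Cauchy--Schwarz inequality $|\H_{f_t}\cdot\nabla f_t|^2 \leq \|\H_{f_t}\|_{\rm HS}^2\,\Gamma(f_t)$, combined with $\Gamma(\Gamma(f_t)) = 4\,|\H_{f_t}\cdot\nabla f_t|^2$, gives $\|\H_{f_t}\|_{\rm HS}^2\,\Gamma(f_t) \geq \tfrac14\Gamma(\Gamma(f_t))$; and $\bRic(f_t,f_t) \geq K\Gamma(f_t)\,\mm$ is the very $\rcd$ Bochner inequality, so the full integrand is non-negative. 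The main technical obstacle I anticipate is justifying the measure-valued integration by parts in the non-smooth setting---transferring identities such as $\Gamma(f,\Gamma(f)) = 2\H_f(\nabla f,\nabla f)$ and the expansion of ${\bf \Delta}\Gamma(f_t)$ from ${\rm TestF}$ approximations to $f_t$, and handling the degenerate set where $G_K$ vanishes, which will likely require splitting the domain according to whether $I_K(f_t)$ stays bounded away from zero.
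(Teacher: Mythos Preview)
Your proposal is correct and follows essentially the same approach as the paper: differentiate $J_K(f_t)$, insert the measure-valued Bochner identity, integrate by parts, and check signs term by term. The paper organizes the integration by parts slightly differently---pairing $G_K^{-1/2} I_K I_K'$ against $\Delta f_t$ and $G_K^{-1/2}$ against $\tfrac12{\bf\Delta}\Gamma(f_t)$ separately rather than packaging both into $\tfrac12{\bf\Delta}G_K$ and invoking $\int{\bf\Delta}\sqrt{G_K}\,\dm=0$---and disposes of the degeneracy you flag at the end by first reducing to Lipschitz $f$ with values in $[\epsilon,1-\epsilon]$, which keeps $G_K$ uniformly bounded away from zero and places $f_t$ in ${\rm TestF}$.
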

\begin{proof}

If $f$ is constant, $J_K(f_t)$ is also a constant function of $t$, there is nothing to prove.  So we  assume that $f$ is not  constant. In addition, similar to  \cite[Proof of Theorem 3.1, Step 1]{AmbrosioMondino-Gaussian}, it suffices to prove the assertion for every $f\in \Lip(X, \d)$ taking values in $[\epsilon, 1-\epsilon]$, for some $\epsilon \in (0, \frac12)$. In fact,  for general $f$, we can replace $f$ by $f^\epsilon:=\frac 1{1+2\epsilon} (f+\epsilon)$, then letting $\epsilon \downarrow 0$ we will get the answer.

 It is known that $ f_t \in L^\infty (X, \mm) \cap {\rm D}(\Delta)$, and $\Delta f_t \in \V$. By Lipschitz regularization of $P_t$ (c.f. \cite[Theorem 6.5]{AGS-M}), we also have $f_t \in \Lip(X, \d)$ for any $ t\in (0, \infty)$, so $f_t \in {\rm TestF}$.  From \cite[Lemma 3.2]{AmbrosioMondino-Gaussian} we know $t \mapsto J_K(f_t)$ is Lipschitz, and  for $\mathcal L^1$-a.e. $t$ we have
\begin{eqnarray*}
\frac{\d J_K}{\d t}&=&\ddt  \int \sqrt{G_K(f_t)}\, \d \mm \\
&=& \int G_K(f_t)^{-\frac12} \Big(  I_K(f_t)I_K'(f_t) \Delta f_t+\Gamma(f_t, \Delta f_t) \Big)\,\d \mm,
\end{eqnarray*}
where  $G_K(f)$ denotes the function $I_K(f)^2 + \Gamma(f)$. 
Notice that  by  minimal (maximal) principle,   $G_K(f_t) >\delta$ for some $\delta>0$. Thus the  formula above is well-posed.

From  the definition of $\bRic$  in Proposition \ref{prop:measurebochner},  we can see that 
\begin{eqnarray*}
\frac{\d J_K}{\d t}
&=& \underbrace{\int  G_K^{-\frac12} I_K(f_t)I_K'(f_t) \Delta f_t \,\d \mm}_{J_1}-\underbrace{\int \frac12 \Gamma\big (G_K^{-\frac12}, \Gamma(f_t)\big)+G_K^{-\frac12}\Big(\| \H_{f_t}\|^2_{\rm HS}+K\Gamma(f_t) \Big)\,\d \mm}_{J_2}\\
&&-\int G_K^{-\frac12} \Big(\d\bRic(f_t, f_t)-K\Gamma(f_t)\dm\Big).
\end{eqnarray*}
Notice that $G_K$ admits a quasi continuous
representative, so we can integrate it with respect to the measure-valued Ricci
tensor.

Thus  the non-smooth Bochner inequality in  Proposition \ref{prop:measurebochner} yields
\begin{equation}\label{prop1:eq1}
\frac{\d J_K}{\d t} \leq J_1+J_2.
\end{equation}

By computation,
\begin{eqnarray*}
J_1&=& -\int \Gamma\big(G_K^{-\frac12} I_K I_K', f_t\big)\,\d \mm\\
&=&  -\int G_K^{-\frac12} (I_K I_K')' \Gamma(f_t)\,\d \mm+\frac12  \int G_K^{-\frac32} I_K I_K' \Gamma(G_K, f_t)\,\d \mm\\
&=&-\int G_K^{-\frac12} (I_K I_K')' \Gamma(f_t)\,\d \mm\\
&&+\frac12  \int G^{-\frac32} I_K I_K' \Big (2I_KI_K' \Gamma(f_t, f_t)+\Gamma\big(\Gamma(f_t), f_t\big)\Big )\,\d \mm\ \\
&=&  -\int G_K^{-\frac12}\big( (I'_K)^2-K \big ) \Gamma(f_t)\,\d \mm+  \int G_K^{-\frac32} (I_K I_K')^2 \Gamma(f_t)\,\d \mm \\
&&~~~~~~+  \int G_K^{-\frac32} I_K I_K' \H_{f_t}( f_t,  f_t)\,\d \mm\\
&=&   -\int G_K^{-\frac32} \Big ( (I_K')^2 \Gamma(f_t)^2\underbrace{-KI_K^2 \Gamma(f_t)-K\Gamma(f_t)^2}_{=-K\Gamma(f_t)G_K(f_t)}-I_KI_K'\H_{f_t}(f_t, f_t)\Big )\,\d \mm
\end{eqnarray*}
where  in the fourth equality  we use  the  identity   $(I _KI_K')'=(I_K')^2-K$ which follows from $I_K I_K''=-K$.

Similarly, 
\begin{eqnarray*}
&&- \frac12 \int \Gamma \big( G_K^{-\frac12}, \Gamma(f_t)\big)\,\d \mm\\
 &=&\int \frac14 G_K^{-\frac32}\Big ( 2I_KI_K'\Gamma\big(f_t, \Gamma(f_t)\big)+\Gamma\big(\Gamma(f_t)\big)\Big )\,\d \mm\\
 &=&  \int  G_K^{-\frac32}\Big ( I_KI_K'\H_{f_t}(f_t, f_t )+ \frac14  \Gamma\big(\Gamma(f_t)\big) \Big )\,\d \mm.
\end{eqnarray*}

In summary, we get
\begin{eqnarray*}
&&J_1+J_2 \\
&=&   -\int G_K^{-\frac32} \Big ( (I_K')^2 \Gamma(f_t)^2 -\frac14 \Gamma\big(\Gamma(f_t)\big)
-2I_K I_K'\H_{f_t}(f_t, f_t)+\| \H_{f_t}\|_{\rm HS}^2\big (I_K^2+\Gamma(f_t)\big) \Big)\,\d \mm\\
&= &  -\int G_K^{-\frac32} \Big ( \big\|I_K\H_{f_t}-I_K' \nabla f_t \otimes \nabla f_t \big \|^2_{\rm HS} +\| \H_{f_t}\|_{\rm HS}^2\Gamma(f_t) 
 -\frac14 \Gamma\big(\Gamma(f_t)\big) \Big )\,\d \mm.
\end{eqnarray*}

Recall that by definition
\begin{eqnarray*}
 \Gamma\big(\Gamma(f_t)\big)&=&2\H_{f_t} \big(\nabla  f_t, \nabla  \Gamma(f_t) \big)\\
&\leq&2 \| \H_{f_t} \|_{\rm HS}\sqrt{\Gamma(f_t)} \sqrt{ \Gamma\big(\Gamma(f_t)\big)},
\end{eqnarray*}
thus
$$\| \H_{f_t}\|^2_{\rm HS}\Gamma(f_t) 
 \geq \frac14 \Gamma\big(\Gamma(f_t)\big).$$ 
 
Combining with \eqref{prop1:eq1} we have $$\frac{\d J_K}{\d t}  \leq J_1+J_2 \leq 0,$$ so $t\mapsto J_K(f_t)$ is  non-increasing.
\end{proof}

Appying  Proposition \ref{prop1:monotone},   we  obtain the  functional version of Gaussian  isoperimetric inequality of Bobkov on $\rcd$ spaces, which had been  proved  by Ambrosio-Mondino in \cite{AmbrosioMondino-Gaussian} using a different proof (see  also  \cite[Chapter 8.5.2]{BakryGentilLedoux14} for more discussions).
\begin{proposition}
Let  $\ms$ be a metric measure space satisfying  $\rcd$ condition for some $K > 0$. Then $\ms$ supports $K$-Bobkov's isoperimetric inequality in the sense of Definition \ref{def:bobkov},
\[
 I_K \left (\int f \,\d \mm \right )  \leq J_K(f)
\]
for all measurable function $f$ with values in $[0, 1]$.
\end{proposition}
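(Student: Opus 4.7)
The plan is to deduce Bobkov's inequality $I_K\bigl(\int f\,\d\mm\bigr) \leq J_K(f)$ directly from the monotonicity of $t\mapsto J_K(P_t f)$ established in Proposition \ref{prop1:monotone}, combined with an identification of $\lim_{t\to\infty} J_K(P_t f)$ via the ergodicity provided by the 2-Bakry-\'Emery inequality for $K>0$.

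First I would observe that for every $t>0$, the Lipschitz regularization of the heat semigroup on $\rcd$ spaces yields $P_t f \in \Lip(X,\d)\cap L^\infty$, so that the liminf in the definition of $J_K(P_t f)$ is trivially attained and
\[
J_K(P_t f) = \int \sqrt{I_K(P_t f)^2 + \Gamma(P_t f)}\,\d\mm.
\]
Proposition \ref{prop1:monotone} then asserts that the function $t \mapsto J_K(P_t f)$ is non-increasing on $(0,\infty)$.

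Next I would compute $\lim_{t\to\infty} J_K(P_t f)$. Applying the 2-Bakry-\'Emery inequality $\Gamma(P_t f) \leq e^{-2K(t-s)} P_{t-s}\Gamma(P_s f)$ from any fixed time $s>0$ (where $\Gamma(P_s f)\in L^1$ by Lemma \ref{lemma:reg}), one gets $\|\Gamma(P_t f)\|_{L^1}\to 0$ as $t\to\infty$. Meanwhile, irreducibility of $\E$ (a consequence of the Bakry-\'Emery gradient bound with $K>0$) implies ergodicity of the semigroup, so $P_t f \to \int f\,\d\mm$ in $L^2$; combined with the uniform continuity and the uniform bound $I_K \leq \sqrt{K/(2\pi)}$ on $[0,1]$, dominated convergence gives $I_K(P_t f) \to I_K(\int f\,\d\mm)$ in $L^1$. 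Since $\mm$ is a probability measure, these two convergences yield $\lim_{t\to\infty} J_K(P_t f) = I_K(\int f\,\d\mm)$.

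Combining, for every $s>0$ the monotonicity gives $J_K(P_s f) \geq \lim_{t\to\infty} J_K(P_t f) = I_K(\int f\,\d\mm)$, and taking $\liminf_{s \downarrow 0}$ in the definition $J_K(f) = \liminf_{s\to 0}\int \sqrt{I_K(P_s f)^2+\Gamma(P_s f)}\,\d\mm$ yields the Bobkov inequality. The only delicate point is the large-time asymptotic, which requires a preliminary smoothing at a fixed positive time to gain $L^1$-control on $\Gamma(P_s f)$ before the exponential decay from the 2-Bakry-\'Emery bound can take over; this step is what makes the argument work for general measurable $f:X\to[0,1]$ rather than just Lipschitz $f$.
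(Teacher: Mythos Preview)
Your proposal is correct and follows essentially the same route as the paper: monotonicity of $t\mapsto J_K(P_t f)$ from Proposition~\ref{prop1:monotone}, identification of the large-time limit as $I_K\bigl(\int f\,\d\mm\bigr)$ via ergodicity and the 2-Bakry--\'Emery decay, and then passage to $\liminf_{t\downarrow 0}$ using the definition of $J_K(f)$. Your treatment is in fact more explicit than the paper's on the large-time asymptotic (the preliminary smoothing to gain $L^1$-control of $\Gamma(P_s f)$ before applying exponential decay), which the paper leaves implicit.
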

\begin{proof}
Let $f$ be a measurable function with values in $[0, 1]$.  By Proposition \ref{prop1:monotone}  and definition of $J_K(f)$ we know
\[
\lmts{t}{+\infty} J_K(f_t)  \leq \lmti{t}{0}J_K(f_t)=J_K(f).
\]
Combining with the  ergodicity of heat flow and the 2-Bakry-\'Emery inequality
\[
\lmt{t}{+\infty} J_K(f_t) =I_K \left (\int f \,\d \mm \right ),
\]
we get  Bobkov's isoperimetric inequality.
\end{proof}

In the next proposition, we discover  the cases of equality in  Bobkov's inequality. By Proposition \ref{prop1:monotone},  we simultaneously  obtain the rigidity of the Gaussian isoperimetric  inequality.   We refer the readers  to \cite[Section 2]{CarlenKerce01} for related discussions on $\R^n$.

\begin{proposition}[Equality in  Bobkov's inequality]\label{prop2:splitting}
Let $\ms$ be a  $\rcd$ metric measure space with $K>0$. Then  there exists a non-constant $f$ attaining the equality $ I_K \left (\int f \,\d \mm \right ) =J_K(f)$   if and only if
 $$\ms \cong  \Big (\R, | \cdot |, \sqrt{{K}/(2\pi)} e^{-{Kt^2}/2}\mathcal \d t\Big)\times (Y, \d_Y, \mm_Y) $$ for some  $\rcd$ space $(Y, \d_Y, \mm_Y) $,
and   up to change of variables, $f$ is either  the indicator function of a half space 
\[
f(r, y)=\chi_E,\qquad E={ (-\infty, e] \times Y},
\]
where $e\in \R\cup \{+\infty\}$ with $\int_{-\infty}^e \phi_K(s)\,\d s=\int f\dm$;
or else, there are  $a=(2\int f)^{-1}$ and  $b=\Phi^{-1}_K\big(f(0, y)\big)$ such that
$$f(y, t)=\Phi_K(at+b)=\int_{-\infty}^{at+b} \phi_K(s)\,\d s.$$

\end{proposition}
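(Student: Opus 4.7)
The plan is to combine the monotonicity of Proposition \ref{prop1:monotone} with the rigidity of the 1-Bakry-\'Emery inequality from Propositions \ref{thm:berigidity2}--\ref{thm:berigidity}. The converse direction is a direct check: in both claimed forms, a Mehler-type computation on the Gaussian factor shows $J_K(P_t f) \equiv I_K(\int f\,\d\mm)$. I therefore focus on the forward implication.

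First, by the ergodicity of the heat flow and the 2-Bakry-\'Emery inequality (as noted after Definition \ref{def:bobkov}), $J_K(P_t f) \to I_K(\int f\,\d\mm)$ as $t \to \infty$. Combined with Proposition \ref{prop1:monotone} and the equality $J_K(f) = I_K(\int f\,\d\mm)$, the map $t \mapsto J_K(P_t f)$ is constant. Writing $f_t := P_t f$ and $G_K := I_K(f_t)^2+\Gamma(f_t)$, vanishing of $\tfrac{d}{dt}J_K(f_t)$ for a.e.\ $t>0$ forces each of the two non-negative integrals in the formula of Proposition \ref{prop1:monotone} to vanish pointwise, giving
\begin{equation*}
I_K(f_t)\,\H_{f_t} = I_K'(f_t)\,\nabla f_t \otimes \nabla f_t \quad\text{and}\quad \bRic(\nabla f_t, \nabla f_t) = K\,\Gamma(f_t)\,\mm.
\end{equation*}
The remaining Cauchy-Schwarz term $\|\H_{f_t}\|^2_{\rm HS}\Gamma(f_t)-\tfrac14\Gamma(\Gamma(f_t))$ then vanishes automatically, both sides being equal to $(I_K'/I_K)^2\Gamma(f_t)^3$. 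Since $f$ is non-constant, the strong maximum principle applied to $f_t$ and $1-f_t$ yields $f_t \in (0,1)$ everywhere for $t>0$, so $u_t := \Phi_K^{-1}(f_t)$ is well defined.

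Next I would transport the above identities to $u_t$ via the non-smooth chain rule of Proposition \ref{prop: self}(3). Using $\phi_K(u_t) = I_K(f_t)$ and $\phi_K'(u_t) = I_K'(f_t)\,I_K(f_t)$, the first identity becomes $\H_{u_t}=0$; by the $L^\infty$-bilinearity of $\bRic$ (Proposition \ref{prop:measurebochner}(3)) and $\nabla f_t = I_K(f_t)\,\nabla u_t$, the second becomes $\bRic(\nabla u_t, \nabla u_t) = K\Gamma(u_t)\,\mm$. The measure-valued Bochner identity then yields ${\bf\Gamma}_2(u_t) = K\Gamma(u_t)\,\mm$, which is condition (1) of Lemma \ref{lemma}. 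As $u_t$ is non-constant, Lemma \ref{lemma} together with Proposition \ref{thm:berigidity2} delivers the splitting $\ms \cong (\R,|\cdot|,\phi_K\mathcal{L}^1)\times(Y,\d_Y,\mm_Y)$, and in the splitting coordinates $u_t$ is affine: $u_t(r,y) = c(t)\,r + d(t)$, so $f_t(r,y) = \Phi_K\bigl(c(t)\,r + d(t)\bigr)$.

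Finally, to recover $f$ I would let $t\downarrow 0$. On the Gaussian factor $P_s$ is the Ornstein-Uhlenbeck semigroup, whose action on a Gaussian-CDF profile is explicit: $P_s\Phi_K(\alpha r+\beta)=\Phi_K\bigl((\alpha e^{-Ks}r+\beta)/\sqrt{\alpha^2(1-e^{-2Ks})+1}\bigr)$. This pins down $c(t),d(t)$ as monotone functions of $t$, so $c_0:=\lim_{t\downarrow 0}c(t)\in[-\infty,\infty]$ and $e:=\lim_{t\downarrow 0}(-d(t)/c(t))\in\R\cup\{\pm\infty\}$ exist; combined with $L^2$-convergence $f_t\to f$, this yields two cases: either $c_0\in\R$ and $f(r,y)=\Phi_K(c_0 r+d_0)$ for some $d_0\in\R$, or $|c_0|=\infty$ and $f(r,y)=\nchi_{(-\infty,e]\times Y}$. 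The explicit constants in the statement are then recovered from the identities $\int f\,\d\mm = \int_\R f(r,\cdot)\phi_K(r)\,\d r$ and $f(0,y)=\Phi_K(b)$. The main technical obstacle is the chain rule in the second paragraph: although $f_t$ enjoys test-function regularity, $u_t=\Phi_K^{-1}(f_t)$ is unbounded wherever $f_t$ is close to $\{0,1\}$, so the identification of $\H_{u_t}$ must be carried out on truncations (reducing to $f$ with values in $[\epsilon,1-\epsilon]$ as in \cite[Proof of Theorem 3.1]{AmbrosioMondino-Gaussian}, and then letting $\epsilon\downarrow 0$) rather than directly.
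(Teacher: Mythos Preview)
Your proposal is correct and follows essentially the same route as the paper's proof. Both arguments use Proposition~\ref{prop1:monotone} to conclude that $\tfrac{\d}{\d t}J_K(f_t)\equiv 0$, extract the pointwise identities $I_K(f_t)\H_{f_t}=I_K'(f_t)\nabla f_t\otimes\nabla f_t$ and $\bRic(\nabla f_t,\nabla f_t)=K\Gamma(f_t)\,\mm$, transport them via the chain rule to $h_t:=\Phi_K^{-1}(f_t)$ to obtain $\H_{h_t}=0$ and $\bRic(\nabla h_t,\nabla h_t)=K\Gamma(h_t)\,\mm$, hence ${\bf\Gamma}_2(h_t)=K\Gamma(h_t)\,\mm$, and then invoke Lemma~\ref{lemma} together with the splitting Proposition~\ref{thm:berigidity2}. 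Your explicit flagging of the truncation issue for the unbounded $h_t$ matches the paper's implicit reliance on the $[\epsilon,1-\epsilon]$ reduction from Proposition~\ref{prop1:monotone}.

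The only substantive difference is in the final identification of $f$. The paper, after reducing to the one-dimensional Gaussian factor via Proposition~\ref{thm:berigidity}, simply quotes \cite[Theorem~1]{CarlenKerce01} to write $f_t=P_{t+s}(\nchi_E)$ for some $s\ge 0$ and then splits into the cases $s=0$ and $s>0$. You instead compute the Ornstein--Uhlenbeck action on Gaussian-CDF profiles directly via the Mehler formula and pass to the limit $t\downarrow 0$ by monotonicity of the coefficients $c(t),d(t)$. Your route is more self-contained but requires a little care in justifying that the limits $c_0$ and $e$ exist (monotonicity of $c(t)$ follows from the Mehler identity, while the ratio $d(t)/c(t)$ scales like $e^{Kt}$ rather than being monotone); the paper's citation avoids this bookkeeping at the cost of an external reference.
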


\begin{proof}

{\bf Part 1}: Denote $f_t=P_t f$  and $h_t=\Phi_K^{-1}( f_t)$. We will show that $h_t$ satisfies  ${\bf \Gamma}_2(h_t)=K\Gamma(h_t)\,\mm$ (c.f.  Proposition \ref{prop:measurebochner}), and thus satisfies  (1)  in Lemma  \ref{lemma}.

By Proposition \ref{prop1:monotone} we know  $ I_K \left (\int f \,\d \mm \right ) =J_K(f)$ if and only if  $$ I_K \left (\int f \,\d \mm \right ) =I_K \left (\int f_t \,\d \mm \right ) =J_K(f_t)\qquad \text{for all}~~~ t\geq 0,$$ which is equivalent to  $\frac{\d J_K}{\d t}=0$ for all $t>0$. From  Proposition \ref{prop1:monotone}, we  know that $\frac{\d J_K}{\d t}=0$ if and only if  the following equalities \eqref{prop2:eq-1} \eqref{prop2:eq-2} \eqref{prop2:eq01} are satisfied
\begin{equation}\label{prop2:eq-1}
\bRic(f_t, f_t)=K\Gamma(f_t)\,\mm,
\end{equation}
\begin{equation}\label{prop2:eq-2}
I_K\H_{f_t}-I_K' \nabla f_t \otimes \nabla f_t =0
\end{equation}
and 
\begin{equation}\label{prop2:eq01}
\| \H_{f_t}\|^2_{\rm HS} \Gamma(f_t) - \frac14 \Gamma\big(\Gamma(f_t)\big)=0.
\end{equation}

\bigskip

By definitions,
\begin{equation}\label{prop2:eq1}
I_K(f_t) = \phi_K(h_t),\qquad f_t = \Phi_K(h_t).
\end{equation}
By \eqref{prop2:eq1} and chain rule (c.f. \cite[Theorem 2.2.6]{G-N}), and the fact that the vector fields are fully supported (c.f. Lemma \ref{exp4}), we get
\[
I'_K(f_t)\nabla f_t = -h_t\phi_K (h_t)\nabla h_t,\qquad\nabla f_t = \phi_K(h_t)\nabla h_t.
\]
Then we have  $$I'_K(f_t)=-h_t,\qquad\nabla f_t = I_K(f_t) \nabla h_t,$$ and
\[
\H_{f_t}= -h_t\phi_K(h_t)\nabla h_t \otimes \nabla h_t + \phi_K(h_t)\H_{h_t}.
\]

In conclusion,  we obtain
\begin{equation}\label{eq0:bob}
 \nabla h_t=I_K^{-1}(f_t)\nabla f_t 
\end{equation}
and
\begin{equation}\label{eq1:bob}
\H_{f_t}= I'_K(f_t)I^{-1}_K(f_t)\nabla f_t \otimes \nabla f_t+ I_K(f_t)\H_{h_t}.
\end{equation}

\bigskip

 By \eqref{eq0:bob} and the  bi-linearity  of $\bRic(\cdot, \cdot)$, \eqref{prop2:eq-1}  is equivalent to
\begin{equation}\label{prop2:eq-1'}
\bRic(h_t, h_t) =K\Gamma(h_t)\,\mm.
\end{equation}

Comparing   \eqref{eq1:bob} and \eqref{prop2:eq-2}, we can see that $f_t$ satisfies \eqref{prop2:eq-2} if and only if $\H_{h_t} = 0$, which is equivalent to
\begin{equation}\label{eq2:bob}
\|\H_{h_t}\| _{\rm HS}= 0.
\end{equation}

By \eqref{eq1:bob} and \eqref{eq2:bob}, we have
\[
\| \H_{f_t}\|_{\rm HS} =\|I'_KI^{-1}_K\nabla f_t \otimes \nabla f_t\|_{\rm HS}= I'_KI^{-1}_K \Gamma(f_t)
\]
and
\begin{eqnarray*}
 \Gamma\big(\Gamma(f_t)\big)&=&2\H_{f_t}\big(\nabla f_t, \nabla \Gamma(f_t)\big)\\
\text{By \eqref{eq1:bob}}\qquad&=&2 I'_KI^{-1}_K\Gamma( f_t) \Gamma\big( f_t, \Gamma(f_t)\big)\\
&=& 4 I'_KI^{-1}_K\Gamma( f_t) \H_{f_t}(\nabla f_t, \nabla f_t )\\
\text{By \eqref{eq1:bob}}\qquad&=& 4 I'_KI^{-1}_K\Gamma( f_t) \Big(I'_K I_K^{-1} \big(\Gamma(f_t)\big)^2 \Big).
\end{eqnarray*}
Therefore,
\begin{equation}\label{prop2:eq01'}
\| \H_{f_t}\|^2_{\rm HS}\Gamma(f_t) - \frac14 \Gamma\big(\Gamma(f_t)\big)=( I'_KI^{-1}_K)^2 \big(\Gamma(f_t)\big)^3-( I'_KI^{-1}_K)^2 \big(\Gamma(f_t)\big)^3=0
\end{equation}
which is exactly \eqref{prop2:eq01}.

In conclusion,  \eqref{prop2:eq-1} \eqref{prop2:eq-2} \eqref{prop2:eq01} $\Longleftrightarrow$ \eqref{prop2:eq-1'}  \eqref{eq2:bob},   and the latter ones are equivalent to
\begin{equation}\label{prop2:eq3}
{\bf \Gamma}_2(h_t)=K\Gamma(h_t)\,\mm
\end{equation}
 which is the thesis.

\bigskip

{\bf Part 2}: By Proposition  \ref{thm:berigidity} we just need to study  the 1-dimensional cases. 
By Proposition  \ref{prop:1dimrigidity}  we  know $h_t=\Phi_K^{-1}( f_t)$ is an affine  function on $\R$  for any $t>0$,   there exist   $a=a(t), b=b(t) \in \R$ such that 
$$f_t(x)=\Phi_K(ax+b)=\int_{-\infty}^{ax+b} \phi_K(s)\,\d s.
$$

By \cite[Theorem 1]{CarlenKerce01} there is  $s\geq 0$ such that $$f_t=P_{t+s}(\chi_E),~~~\forall~t\geq 0$$
where $E$  is the half-line such that $\int_E \phi_K \, \d \mathcal L^1=\int f\dm$.

Therefore, if $s=0$, $f=\chi_E$. Otherwise, $a(t), b(t)$ are continuous on $[0, +\infty)$, so $$f=\Phi_K(a_0x+b_0)=\int_{-\infty}^{a_0x+b_0} \phi_K(s)\,\d s$$
where $a_0=(2\int f)^{-1}$, $b_0=\Phi^{-1}_K(f(0))$. 
\end{proof}

Applying Proposition \ref{prop2:splitting}, we obtain the  rigidity of the Gaussian isoperimetric inequality.
\begin{corollary}[Rigidity of the Gaussian isoperimetric inequality]\label{coro:gaussian}
Let $\ms$ be a  $\rcd$ metric measure space with $K>0$. 
If there is a Borel set $E\subset X$ with positive $\mm$-measure  such that 
\[
P(E)=J_K(\chi_E) =I_K\big(\mm(E)\big).
\]
Then  $$\ms \cong   \Big(\R, | \cdot |, \sqrt{{K}/(2\pi)} e^{-{Kt^2}/2}\mathcal \d t \Big) \times (Y, \d_Y, \mm_Y)$$ for some  $\rcd$ space $(Y, \d_Y, \mm_Y) $, and
$E\cong {(-\infty, e]} \times Y$ with $e=\Phi^{-1}_K\big(\mm(E)\big)$.
\end{corollary}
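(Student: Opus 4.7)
The plan is to reduce the statement directly to Proposition \ref{prop2:splitting} applied to $f := \chi_E$, after disposing of the trivial boundary cases $\mm(E) \in \{0, 1\}$.

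First I would handle the boundary cases. If $\mm(E) = 0$ the conclusion is vacuous. If $\mm(E) = 1$ then $\chi_E = 1$ $\mm$-a.e., so $E = X$ up to an $\mm$-null set and the choice $e = +\infty$ together with any splitting of $X$ satisfies the conclusion. From now on assume $0 < \mm(E) < 1$; then $f = \chi_E$ is a non-constant measurable function with values in $[0, 1]$ and $\int f \dm = \mm(E)$.

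Next I would invoke Proposition \ref{prop2:splitting}: the middle equality in the hypothesis, $J_K(\chi_E) = I_K(\mm(E))$, is precisely the Bobkov equality for $f$, and since $f$ is non-constant the proposition provides an isometric splitting
\[
(X, \d, \mm) \;\cong\; \Big(\R, |\cdot|, \sqrt{K/(2\pi)}\, e^{-Kt^2/2}\, \d t\Big) \times (Y, \d_Y, \mm_Y)
\]
together with a dichotomy for the shape of $f$ in the product coordinates: either $f = \chi_{(-\infty, e]\times Y}$ for some $e \in \R \cup \{+\infty\}$, or else $f(r, y) = \Phi_K(ar + b)$ for suitable constants $a, b$. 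The second alternative is incompatible with $f = \chi_E$ taking values only in $\{0, 1\}$ and being non-constant, because $\Phi_K$ maps $\R$ onto the open interval $(0, 1)$ (and the degenerate choice $a = 0$ would make $f$ constant). Hence the half-space alternative holds, so $E = (-\infty, e] \times Y$ modulo $\mm$-null sets, and the normalization $\mm(E) = \int_{-\infty}^{e} \phi_K(s)\,\d s = \Phi_K(e)$ pins down $e = \Phi_K^{-1}(\mm(E))$.

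In summary, no serious obstacle remains once Proposition \ref{prop2:splitting} is in hand. The only genuinely new input is the easy observation that the smooth branch $\Phi_K(ar + b)$ cannot be realized by a $\{0, 1\}$-valued non-constant function, which forces the half-space form. I would not need any independent analysis of the perimeter equality $P(E) = J_K(\chi_E)$: only the middle equality $J_K(\chi_E) = I_K(\mm(E))$ is used in the argument.
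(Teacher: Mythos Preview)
Your proposal is correct and mirrors the paper's intended argument: the corollary is presented without proof as an immediate consequence of Proposition~\ref{prop2:splitting}, and your exclusion of the smooth alternative $\Phi_K(ar+b)$ via the $\{0,1\}$-valuedness of $\chi_E$ is exactly the one missing observation. One minor point: your treatment of the case $\mm(E)=1$ is not quite right, since there the hypothesis collapses to $0=0=0$ and carries no information about $X$, so the splitting need not hold; this degenerate case should simply be excluded (the corollary is tacitly about $0<\mm(E)<1$) rather than resolved by invoking ``any splitting of $X$''.
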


\subsection{Equalities  in $\Phi$-entropy inequalities} \label{sect:phi}
In this part we will characterize the cases of equalities in the logarithmic Sobolev inequality,  the Poincar\'e inequality,   and more generally,  $\Phi$-entropy inequalities of Chafa\"i \cite{Chafai04} and Bolley-Gentil \cite{BolleyGentil10} on $\rcd$ metric measure spaces.

First of all, we prove a general $\Phi$-entropy inequality. For more discussions  about  admissible $\Phi$'s, see \cite[Page 330]{Chafai04}, \cite[Section 1.3]{BolleyGentil10} and the references therein.

\begin{proposition}\label{prop:phi}
Let  $\ms$ be a  metric measure space satisfying  $\rcd$ condition for some $K>0$. Let $\Phi$ be a $C^2$-continuous  strictly convex function on an interval $I\subset \R$ such that  $\frac{1}{\Phi''}$
is concave.  Then $\ms$ satisfies the following $\Phi$-entropy inequality:
\begin{equation}\label{eq1-prop:phi}
\underbrace{\ent{\mm}^\Phi(f)}_{:=\int \Phi(f)\dm}-\Phi\Big(\int f\dm \Big) \leq \frac1{2K}\int \Phi''(f) \Gamma(f)\dm
\end{equation}
for all  $I$-valued functions $f$.
\end{proposition}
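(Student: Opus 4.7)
The plan is to implement the classical semigroup interpolation method of Bakry-\'Emery: I track the evolution of the $\Phi$-entropy of $P_tf$ along the heat flow and express the entropy deficit as a time integral whose integrand is controlled pointwise by Proposition~\ref{theorem:phi}. Concretely, assuming sufficient regularity on $f$ (addressed at the end), set $\psi(t):=\int\Phi(P_tf)\dm$. Ergodicity of the heat flow on $\rcd$ spaces with $K>0$ gives $P_tf\to\int f\dm$ in $L^2$ as $t\to\infty$, so by continuity of $\Phi$ and dominated convergence $\psi(\infty)=\Phi\bigl(\int f\dm\bigr)$, hence
$$\ent{\mm}^\Phi(f)-\Phi\Big(\int f\dm\Big)=\psi(0)-\psi(\infty)=-\int_0^\infty\psi'(t)\,\d t.$$

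Next I would compute $\psi'(t)$ by the chain rule and integration by parts, justified since $P_tf\in{\rm TestF}$ for $t>0$ by the regularization properties of the heat flow on $\rcd$ spaces:
$$\psi'(t)=\int\Phi'(P_tf)\Delta P_tf\dm=-\int\Phi''(P_tf)\Gamma(P_tf)\dm.$$
Invoking the pointwise estimate \eqref{eq0:lemma:phi} of Proposition~\ref{theorem:phi}, namely $\Phi''(P_tf)\Gamma(P_tf)\le e^{-2Kt}P_t\bigl(\Phi''(f)\Gamma(f)\bigr)$, together with the fact that $P_t$ preserves integrals against $\mm$, I obtain
$$-\psi'(t)\le e^{-2Kt}\int\Phi''(f)\Gamma(f)\dm.$$
Integrating over $t\in(0,\infty)$ and using $\int_0^\infty e^{-2Kt}\,\d t=\tfrac1{2K}$ yields exactly \eqref{eq1-prop:phi}.

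The main obstacle is the regularity issue: for a general $I$-valued $f$ the quantities $\Phi(f)$ and $\Phi''(f)\Gamma(f)$ need not be integrable, and the computation of $\psi'$ requires $P_tf$ to have enough smoothness. I would first establish the inequality for $f\in\V\cap L^\infty$ taking values in a compact subinterval $[a,b]\subset I$: the $L^\infty$-contraction of the heat flow keeps $P_tf\in[a,b]$, so $\Phi,\Phi',\Phi''$ are uniformly bounded along the flow; the Lipschitz regularization of $P_t$ (\cite[Theorem~6.5]{AGS-M}) ensures $P_tf\in{\rm TestF}$ for $t>0$, making every step rigorous; and boundary terms vanish since $\int\Delta P_tf\dm=0$. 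A standard truncation and monotone-convergence argument, exploiting the convexity of $\Phi$ and the behaviour of the Fisher-type quantity $\int\Phi''(f)\Gamma(f)\dm$ under approximation, then extends the inequality to general $I$-valued $f$, allowing the right-hand side to be $+\infty$.
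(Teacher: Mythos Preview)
Your proposal is correct and follows essentially the same approach as the paper: both write the entropy deficit as $-\int_0^\infty\psi'(t)\,\d t$, compute $\psi'(t)=-\int\Phi''(P_tf)\Gamma(P_tf)\dm$, apply the pointwise bound \eqref{eq0:lemma:phi} from Proposition~\ref{theorem:phi}, and integrate the factor $e^{-2Kt}$. The paper's proof is terser (it cites \cite[Theorem~4.16]{AGS-C} for the derivative formula and omits the approximation discussion), but the argument is the same.
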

\begin{proof}
Let $f$ be an $I$-valued function and denote $f_t:=P_t f$.
By the  ergodicity of the heat flow, we have
\begin{eqnarray*}
\ent{\mm}^\Phi(f)-\Phi\Big(\int f\dm \Big)&=&-\int_0^{+\infty} \ddt \ent{\mm}^\Phi(f_t)\,\d t\\
\text{By~\cite[Theorem 4.16]{AGS-C} }&=& \int_0^{+\infty}  \int  \Phi''(f_t) \Gamma(f_t)\dm\, \d t\\
\text{By~\eqref{eq0:lemma:phi}, Proposition \ref{theorem:phi}} &\leq & \int_0^{+\infty} e^{-2Kt} \int P_t\big( \Phi''(f) \Gamma(f)\big)\dm \, \d t\\
&=& \frac1{2K} \int  \Phi''(f) \Gamma(f)\dm 
\end{eqnarray*}
which is the thesis.
\end{proof}

\bigskip

Finally, we   complete the proof of  Theorem \ref{th2-intro}.

\begin{proof}[Proof of Theorem \ref{th2-intro}]
We keep the same notations as in $\S$\ref{section2:intro}.  If there is a function $f$ attaining the equality in \eqref{eq1-prop:phi}, from the proof of Proposition \ref{prop:phi}, we can see that
\[
{\Phi''(P_t f)}{ {\Gamma(P_t f )}}   =  e^{-2Kt} P_t \big({\Phi''}(f){ \Gamma(f)} \big)
\]
for almost every $t>0$.
If $f$ is not constant, by Proposition \ref{theorem:phi} (or Corollary \ref{coro:phi})  and Proposition \ref{thm:berigidity2} we know $\ms$ is isometric  to the product  $\big (\R, | \cdot |, \phi_K\mathcal L^1\big) \times (Y, \d_Y, \mm_Y)$ of two $\rcd$ metric measure spaces.
Concerning the extreme functions, by Corollary \ref{coro:phi}  and Proposition \ref{thm:berigidity} we just need to consider the following two cases

\begin{itemize}
\item [a)]  Poincar\'e inequality:  $\Phi=x^2$ for $x\in\R$.  If  there is a non-constant   function $f\in \V$ with $\int f\,\d \mm=0$ such that 
\begin{equation*}
\int f^2\,\d \mm = \frac1{K} \int {|\nabla f|^2}\,\d \mm.
\end{equation*}
Then $f$ itself satisfies the properties in Lemma  \ref{lemma}.  In this case $f(r, y)=a_p r$ for a constant $a_p\in \R$.
\item [b)]  Logarithmic Sobolev inequality: $\Phi(x)=x\ln x$  for  $x\in \R^+$.
If  there is a non-negative   function $f\in \V$ with $\int f\,\d \mm=1$ such that 
\begin{equation*}
\int f\ln f\,\d \mm = \frac1{2K} \int \frac{|\nabla f|^2}{f}\,\d \mm.
\end{equation*}
Then  by Corollary \ref{coro:phi},   $\ln f$ attains the equality in the 2-Bakry-\'Emery inequality. In this case $f(r, y)=e^{a_l r-a_l^2/2K}$ for a constant $a_l \in \R$.
\end{itemize}

\end{proof}

\def\cprime{$'$}


\begin{thebibliography}{10}

\bibitem{AmbrosioICM}
{\sc L.~Ambrosio}, {\em Calculus, heat flow and curvature-dimension bounds in
  metric measure spaces}, Proceedings of the ICM 2018,  (2018).

\bibitem{ABS19}
{\sc L.~Ambrosio, E.~Bru{\'e}, and D.~Semola}, {\em Rigidity of the
  1-{B}akry--{{\'E}}mery inequality and sets of finite perimeter in {RCD}
  spaces}, Geom. Funct. Anal., 29 (2019), pp.~949--1001.

\bibitem{ADM-BV}
{\sc L.~Ambrosio and S.~Di~Marino}, {\em Equivalent definitions of {BV} space
  and of total variation on metric measure spaces}, J. Funct. Anal., 266
  (2014), pp.~4150 -- 4188.

\bibitem{ADMG-P}
{\sc L.~Ambrosio, S.~Di~Marino, and N.~Gigli}, {\em Perimeter as relaxed
  {M}inkowski content in metric measure spaces}, Nonlinear Anal., 153 (2017),
  pp.~78--88.

\bibitem{AG-U}
{\sc L.~Ambrosio and N.~Gigli}, {\em A user's guide to optimal transport}.
\newblock Modelling and Optimisation of Flows on Networks, Lecture Notes in
  Mathematics, Vol. 2062, Springer, 2011.

\bibitem{AGMR-R}
{\sc L.~Ambrosio, N.~Gigli, A.~Mondino, and T.~Rajala}, {\em Riemannian {R}icci
  curvature lower bounds in metric measure spaces with {$\sigma$}-finite
  measure}, Trans. Amer. Math. Soc., 367 (2015), pp.~4661--4701.

\bibitem{AGS-G}
{\sc L.~Ambrosio, N.~Gigli, and G.~Savar{\'e}}, {\em Gradient flows in metric
  spaces and in the space of probability measures}, Lectures in Mathematics ETH
  Z\"urich, Birkh\"auser Verlag, Basel, second~ed., 2008.

\bibitem{AGS-C}
\leavevmode\vrule height 2pt depth -1.6pt width 23pt, {\em Calculus and heat
  flow in metric measure spaces and applications to spaces with {R}icci bounds
  from below}, Invent. math., 195 (2014), pp.~289--391.

\bibitem{AGS-M}
\leavevmode\vrule height 2pt depth -1.6pt width 23pt, {\em Metric measure
  spaces with {R}iemannian {R}icci curvature bounded from below}, Duke Math.
  J., 163 (2014), pp.~1405--1490.

\bibitem{AGS-B}
\leavevmode\vrule height 2pt depth -1.6pt width 23pt, {\em Bakry-\'{E}mery
  curvature-dimension condition and {R}iemannian {R}icci curvature bounds},
  Ann. Probab., 43 (2015), pp.~339--404.

\bibitem{AmbrosioMondino-Gaussian}
{\sc L.~Ambrosio and A.~Mondino}, {\em Gaussian-type isoperimetric inequalities
  in {RCD$(K,\infty)$} probability spaces for positive {$K$}}, Atti Accad. Naz.
  Lincei Rend. Lincei Mat. Appl., 27 (2016), pp.~497--514.

\bibitem{AT-W}
{\sc L.~Ambrosio and D.~Trevisan}, {\em Well-posedness of {L}agrangian flows
  and continuity equations in metric measure spaces}, Anal. PDE, 7 (2014),
  pp.~1179--1234.

\bibitem{BE-D}
{\sc D.~Bakry and M.~{\'E}mery}, {\em Diffusions hypercontractives}, in
  S\'eminaire de probabilit\'es, {XIX}, 1983/84, vol.~1123 of Lecture Notes in
  Math., Springer, Berlin, 1985, pp.~177--206.

\bibitem{BakryGentilLedoux14}
{\sc D.~Bakry, I.~Gentil, and M.~Ledoux}, {\em Analysis and geometry of
  {M}arkov diffusion operators}, vol.~348 of Grundlehren der Mathematischen
  Wissenschaften [Fundamental Principles of Mathematical Sciences], Springer,
  Cham, 2014.

\bibitem{BakryLedoux96}
{\sc D.~Bakry and M.~Ledoux}, {\em L\'{e}vy-{G}romov's isoperimetric inequality
  for an infinite-dimensional diffusion generator}, Invent. Math., 123 (1996),
  pp.~259--281.

\bibitem{Bobkov97}
{\sc S.~G. Bobkov}, {\em An isoperimetric inequality on the discrete cube, and
  an elementary proof of the isoperimetric inequality in {G}auss space}, Ann.
  Probab., 25 (1997), pp.~206--214.

\bibitem{BolleyGentil10}
{\sc F.~Bolley and I.~Gentil}, {\em Phi-entropy inequalities for diffusion
  semigroups}, J. Math. Pures Appl. (9), 93 (2010), pp.~449--473.

\bibitem{Bouyrie17}
{\sc R.~Bouyrie}, {\em Rigidity phenomenons for an infinite dimension diffusion
  operator and cases of near equality in the {B}akry--{L}edoux isoperimetric
  comparison theorem}.
\newblock Preprint, arXiv:1708.07203, 2017.

\bibitem{CarlenKerce01}
{\sc E.~A. Carlen and C.~Kerce}, {\em On the cases of equality in {B}obkov's
  inequality and {G}aussian rearrangement}, Calc. Var. Partial Differential
  Equations, 13 (2001), pp.~1--18.

\bibitem{CavallettiL1}
{\sc F.~Cavalletti}, {\em An overview of {$L^1$} optimal transportation on
  metric measure spaces}, in Measure theory in non-smooth spaces, Partial
  Differ. Equ. Meas. Theory, De Gruyter Open, Warsaw, 2017, pp.~98--144.

\bibitem{Chafai04}
{\sc D.~Chafa\"{\i}}, {\em Entropies, convexity, and functional inequalities:
  on {$\Phi$}-entropies and {$\Phi$}-{S}obolev inequalities}, J. Math. Kyoto
  Univ., 44 (2004), pp.~325--363.

\bibitem{C-D}
{\sc J.~Cheeger}, {\em Differentiability of {L}ipschitz functions on metric
  measure spaces}, Geom. Funct. Anal., 9 (1999), pp.~428--517.

\bibitem{ChengZhou17}
{\sc X.~Cheng and D.~Zhou}, {\em Eigenvalues of the drifted {L}aplacian on
  complete metric measure spaces}, Commun. Contemp. Math., 19 (2017),
  pp.~1650001, 17.

\bibitem{G-S}
{\sc N.~Gigli}, {\em The splitting theorem in non-smooth context}.
\newblock Preprint, arXiv:1302.5555., 2013.

\bibitem{G-Overview}
\leavevmode\vrule height 2pt depth -1.6pt width 23pt, {\em An overview of the
  proof of the splitting theorem in spaces with non-negative {R}icci
  curvature}, Anal. Geom. Metr. Spaces, 2 (2014), pp.~169--213.

\bibitem{G-O}
\leavevmode\vrule height 2pt depth -1.6pt width 23pt, {\em On the differential
  structure of metric measure spaces and applications}, Mem. Amer. Math. Soc.,
  236 (2015), pp.~vi+91.

\bibitem{G-N}
\leavevmode\vrule height 2pt depth -1.6pt width 23pt, {\em Nonsmooth
  differential geometry---an approach tailored for spaces with {R}icci
  curvature bounded from below}, Mem. Amer. Math. Soc., 251 (2018), pp.~v+161.

\bibitem{GH-C}
{\sc N.~Gigli and B.-X. Han}, {\em The continuity equation on metric measure
  spaces}, Calc. Var. Partial Differential Equations, 53 (2015), pp.~149--177.

\bibitem{GH-I}
\leavevmode\vrule height 2pt depth -1.6pt width 23pt, {\em Independence on
  {$p$} of weak upper gradients on {RCD} spaces}, J. Funct. Anal., 271 (2016),
  pp.~1--11.

\bibitem{GKKO-R}
{\sc N.~Gigli, C.~Ketterer, K.~Kuwada, and S.-i. Ohta}, {\em Rigidity for the
  spectral gap on {$RCD(K, \infty)$}-spaces}, Amer. J. Math.,  (to appear).
\newblock Preprint, arXiv:1709.04017.

\bibitem{GigliLedoux}
{\sc N.~Gigli and M.~Ledoux}, {\em From log {S}obolev to {T}alagrand: a quick
  proof}, Discrete Contin. Dyn. Syst., 33 (2013), pp.~1927--1935.

\bibitem{GMS-C}
{\sc N.~Gigli, A.~Mondino, and G.~Savar{\'e}}, {\em Convergence of pointed
  non-compact metric measure spaces and stability of {R}icci curvature bounds
  and heat flows}, Proc. Lond. Math. Soc. (3), 111 (2015), pp.~1071--1129.

\bibitem{GRS-O}
{\sc N.~Gigli, T.~Rajala, and K.-T. Sturm}, {\em Optimal maps and
  exponentiation on finite dimensional spaces with {R}icci curvature bounded
  from below}, The Journal of Geometric Analysis,  (2015), pp.~1--16.

\bibitem{Han-CVPDE18}
{\sc B.-X. Han}, {\em Characterizations of monotonicity of vector fields on
  metric measure spaces}, Calc. Var. Partial Differential Equations, 57 (2018),
  pp.~Art. 113, 35.

\bibitem{Han-DCDS18}
\leavevmode\vrule height 2pt depth -1.6pt width 23pt, {\em New
  characterizations of {R}icci curvature on {RCD} metric measure spaces},
  Discrete Contin. Dyn. Syst., 38 (2018), pp.~4915--4927.

\bibitem{Han-JGEA18}
\leavevmode\vrule height 2pt depth -1.6pt width 23pt, {\em Ricci tensor on
  {${RCD}^*(K, N)$} spaces}, J. Geom. Anal., 28 (2018), pp.~1295--1314.

\bibitem{Han-CDonRM}
\leavevmode\vrule height 2pt depth -1.6pt width 23pt, {\em Measure rigidity of
  synthetic lower {R}icci curvature bound on {R}iemannian manifolds}.
\newblock Preprint, arXiv:1902.00942, 2019.

\bibitem{KlartagNeedle}
{\sc B.~Klartag}, {\em Needle decompositions in {R}iemannian geometry}, Mem.
  Amer. Math. Soc., 249 (2017), pp.~v + 77.

\bibitem{Ledoux98}
{\sc M.~Ledoux}, {\em A short proof of the {G}aussian isoperimetric
  inequality}, in High dimensional probability ({O}berwolfach, 1996), vol.~43
  of Progr. Probab., Birkh\"{a}user, Basel, 1998, pp.~229--232.

\bibitem{Li-Harnack}
{\sc H.~Li}, {\em Dimension-free {H}arnack inequalities on ${RCD}({K},\infty)$
  spaces}, J. Theoret. Probab., 29 (2016), pp.~1280--1297.

\bibitem{LV-HJ}
{\sc J.~Lott and C.~Villani}, {\em Hamilton-{J}acobi semigroup on length spaces
  and applications}, J. Math. Pures Appl. (9), 88 (2007), pp.~219--229.

\bibitem{Lott-Villani07}
{\sc J.~Lott and C.~Villani}, {\em Weak curvature conditions and functional
  inequalities}, J. Funct. Anal., 245 (2007), pp.~311--333.

\bibitem{Lott-Villani09}
\leavevmode\vrule height 2pt depth -1.6pt width 23pt, {\em Ricci curvature for
  metric-measure spaces via optimal transport}, Ann. of Math. (2), 169 (2009),
  pp.~903--991.

\bibitem{Morgan05}
{\sc F.~Morgan}, {\em Manifolds with density}, Notices Amer. Math. Soc., 52
  (2005), pp.~853--858.

\bibitem{Ohta2019}
{\sc S.-i. Ohta and A.~Takatsu}, {\em Equality in the logarithmic {S}obolev
  inequality}, manuscripta mathematica,  (2019).

\bibitem{OttoVillani00}
{\sc F.~Otto and C.~Villani}, {\em Generalization of an inequality by
  {T}alagrand and links with the logarithmic {S}obolev inequality}, J. Funct.
  Anal., 173 (2000), pp.~361--400.

\bibitem{Rajala12}
{\sc T.~Rajala}, {\em Local {P}oincar\'e inequalities from stable curvature
  conditions on metric spaces}, Calc. Var. Partial Differential Equations, 44
  (2012), pp.~477--494.

\bibitem{S-S}
{\sc G.~Savar{\'e}}, {\em Self-improvement of the {B}akry-{{\'E}}mery condition
  and {W}asserstein contraction of the heat flow in ${RCD(K, \infty)}$ metric
  measure spaces}, Disc. Cont. Dyn. Sist. A, 34 (2014), pp.~1641--1661.

\bibitem{S-O1}
{\sc K.-T. Sturm}, {\em On the geometry of metric measure spaces. {I}}, Acta
  Math., 196 (2006), pp.~65--131.

\bibitem{Weaver01}
{\sc N.~Weaver}, {\em Lipschitz algebras and derivations. {II}. {E}xterior
  differentiation}, J. Funct. Anal., 178 (2000), pp.~64--112.

\bibitem{ZhangZhu10}
{\sc H.-C. Zhang and X.-P. Zhu}, {\em Ricci curvature on {A}lexandrov spaces
  and rigidity theorems}, Comm. Anal. Geom., 18 (2010), pp.~503--553.

\end{thebibliography}
\end{document}